\documentclass{amsart}
\usepackage[utf8]{inputenc}

\usepackage{colonequals}
\usepackage{amsaddr}

\usepackage{float}
\usepackage{tikz-cd}
\usepackage{amsmath}%
\usepackage{amsfonts}%
\usepackage{amsthm}
\usepackage{amssymb}%
\usepackage{graphicx}

\usepackage{hyperref}
\usepackage[all,cmtip]{xy}
\usepackage{afterpage}
\usepackage{geometry}
 \geometry{
 a4paper,
 total={150mm,247mm},
 left=30mm,
 top=25mm,
 }
 \usepackage{comment}
\usepackage{csquotes}
\usepackage{stmaryrd}
\usepackage{lscape} 
\usepackage{pdflscape} 
\usepackage{rotating}
\usepackage{tabularx}
\usepackage{xcolor}

\theoremstyle{plain}
\newtheorem{theorem}{Theorem}[section]
\newtheorem{lemma}[theorem]{Lemma}
\newtheorem{corollary}[theorem]{Corollary}

\newtheorem{proposition}[theorem]{Proposition}

\newcommand{\sqcommdiag}[8]{\[
\xymatrixcolsep{4pc}\xymatrix{
#1 \ar[r]^{#6} \ar[d]_{#5} & #2 \ar[d]^{#7} \\
 #3 \ar[r]_{#8}& #4
}
\]
}
\makeatletter

\makeatother
\theoremstyle{definition}
\newtheorem{example}[theorem]{Example}
\newtheorem{definition}[theorem]{Definition}

\theoremstyle{remark}
\newtheorem{remark}[theorem]{Remark}

\renewcommand{\mod}{\textrm{ mod }}

\newcommand{\Z}{\mathbb{Z}}
\newcommand{\Q}{\mathbb{Q}}

\newcommand{\ord}{\mathrm{ord}}
\newcommand{\om}{\omega}

\renewcommand{\P}{\mathbb{P}}
\newcommand{\F}{\mathbb{F}}
\newcommand{\Fp}{\mathbb{F}_p}

\newcommand{\al}{\alpha}

\renewcommand{\O}{\mathcal{O}}

\newcommand{\rr}{\mathfrak{r}}
\newcommand{\dd}{\mathrm{d}}

\newcommand{\pp}{\mathfrak{p}}

\newcommand{\mm}{\mathfrak{m}}
\newcommand{\qq}{\mathfrak{q}}

\newcommand{\Gal}{\text{Gal}}

\newcommand{\hot}{\text{higher-order terms}}
\DeclareMathOperator{\Aut}{{\rm Aut}}

\usepackage{xcolor}

\usepackage{xpatch}

\makeatletter
\xpatchcmd{%
\@maketitle}{%
\ifx\@empty\authors \else \@setauthors \fi
}{%
  \ifx\@empty\authors \else \@setauthors \fi
  \ifx\@empty\addresses \else\@setaddresses\fi
}{\typeout{Patch successful}}{\typeout{Patch failed}}
\makeatother

\begin{document}
\title[Points on modular curves using generalised symmetric Chabauty]{Cubic and quartic points on modular curves using generalised symmetric Chabauty}

 \author{
 Josha Box$^1$, Stevan Gajovi\'{c}$^2$ and Pip Goodman$^3$}
 
 \address{$^1$University of Warwick, $^2$University of Groningen and $^3$University of Bristol}
  
\email{Correspondence to be sent to p.a.goodman@bristol.ac.uk}

\maketitle
\begin{abstract}
     Answering a question of Zureick-Brown, we determine the cubic points on the modular curves $X_0(N)$ for $N \in \{53,57,61,65,67,73\}$ as well as the quartic points on $X_0(65)$.
     To do so, we develop a ``partially relative'' symmetric Chabauty method. Our results generalise current symmetric Chabauty theorems, and improve upon them by lowering the involved prime bound.
     For our curves a number of novelties occur. We prove a ``higher order'' Chabauty theorem to deal with these cases.
     Finally, to study the isolated quartic points on $X_0(65)$, we rigorously compute the full rational Mordell--Weil group of its Jacobian.
\end{abstract}
\section{Introduction}
This work started at the 2020 Arizona Winter School with a question from David Zureick--Brown: Is it possible to determine the finitely many cubic points on $X_0(65)$ despite the infinitude of quadratic points? In this article we answer this question affirmatively by developing a ``partially relative'' symmetric Chabauty method (Theorem \ref{bigtheorem}).
This theorem generalises the work of Siksek \cite{siksek} on symmetric Chabauty, and moreover can be used with a larger set of small primes. In certain cases this ``first order'' method still fails. To overcome this, we develop a Chabauty method (Theorem \ref{biggertheorem}) that takes into account higher order terms of the relevant expansions of differential forms.

The case of $X_0(65)$ does not stand alone. Often for higher degrees $d>2$, it happens that a curve admits infinitely many points of lower degree $e<d$, while the set of degree $d$ points is finite. This is the first Chabauty method which has the potential to compute all degree $d$ points in some of such cases, provided the rank of the Mordell--Weil group of the curve's Jacobian is not too large. This opens the way for the provable determination of all points of fixed degree $d$ (notably when $d>2$) on various classes of interesting curves. We illustrate this with the following result.

\begin{theorem}\label{thm:cubic+quartic}
The set of cubic points on each of the curves
\[
X_0(53),\;\; X_0(57),\;\; X_0(61), \;\;X_0(65),\;\; X_0(67) \text{ and }X_0(73)
\]
is finite and listed in Section \ref{section_results}.
The quartic points on $X_0(65)$ form an infinite set.
This infinite set consists of inverse images of quadratic points on the quotient curve $X^+_0(65)$ and a finite number of isolated points.
The isolated points are listed in \S \ref{subsection_quarticpoints}.
\end{theorem}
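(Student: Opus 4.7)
\emph{Proof proposal.} My plan is to reduce each finite-cardinality claim to a concrete $p$-adic computation via symmetric Chabauty, and to treat the quartic case on $X_0(65)$ by splitting off the infinite pullback family before applying the method.

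Fix $d \in \{3,4\}$ and $N$ as in the theorem, and work inside the $d$-th symmetric power $X_0(N)^{(d)}$ together with its Abel-Jacobi image in the Jacobian $J_0(N)$. The first step is to pin down the arithmetic of $J_0(N)(\Q)$: verify that its Mordell-Weil rank $r$ is small enough (roughly $r \le g - d$, where $g$ is the genus of $X_0(N)$) for the Chabauty condition of Theorem~\ref{bigtheorem} to apply, and identify generators modulo torsion. For $X_0(65)$ with $d = 4$, this necessitates the explicit determination of the full rational Mordell-Weil group promised in the abstract, since we require generators rather than merely the rank in order to produce the differentials annihilating the image; for the other curves in the list, published rank data combined with a saturation/descent argument should suffice.

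Next, I would enumerate the ``obvious'' effective $\Q$-rational divisors of degree $d$: sums of rational cusps, rational CM points, Galois orbits of quadratic points (of which each $X_0(N)$ in the list carries infinitely many), and, crucially for $X_0(65)$ with $d=4$, pullbacks of quadratic points from the Atkin-Lehner quotient $X_0^+(65)$. The latter quotient has small enough genus to admit infinitely many quadratic points, which accounts for the infinite family appearing in the theorem. Having tabulated these candidates, I would choose a small prime $p$ of good reduction, taking advantage of the weakened prime bound in Theorem~\ref{bigtheorem}, compute $X_0(N)^{(d)}(\F_p)$, and run the partially relative symmetric Chabauty criterion over each mod-$p$ residue disk to certify that the tabulated global divisors exhaust the points lying above it.

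The main obstacle is the residue disks in which this first-order criterion fails, because the reductions of the annihilating differentials do not cut out the known local points to the required order. On those disks I would invoke the higher-order Chabauty criterion of Theorem~\ref{biggertheorem}, pushing the expansions of the relevant differentials to further terms in order to tighten the local constraint. This is the technically delicate step, particularly when a failing disk harbours a CM divisor that is globally isolated but reduces modulo $p$ to a degenerate configuration, and it is here that I expect most of the case analysis to concentrate. Finally, for $X_0(65)$ with $d=4$, after handling every disk I would separate out the finitely many residue classes supporting a point that is not of pullback type, recovering the list in \S\ref{subsection_quarticpoints} and completing the proof.
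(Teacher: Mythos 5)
There is a genuine gap in your plan: you propose to pick a single prime $p$ and ``run the partially relative symmetric Chabauty criterion over each mod-$p$ residue disk to certify that the tabulated global divisors exhaust the points lying above it,'' but Theorems~\ref{bigtheorem} and~\ref{biggertheorem} can only be invoked on a residue disc that already contains a \emph{known} degree-$d$ divisor $\mathcal{Q}$ serving as the base of the tiny integrals. They say nothing about residue discs that contain no tabulated point, and for a fixed prime there will in general be many such discs in $\widetilde{X}^{(d)}(\F_p)$. Because the paper works only with tiny (directly computable) integrals rather than full Coleman integrals between $K$-points for $K/\Q_p$ of degree $>1$, one cannot simply rule out these discs $p$-adically; the gap is closed by the Mordell--Weil sieve of Section~\ref{section_MWsieve}, which combines reduction data at several primes $p_1,\dots,p_n$ with the structure of a finite-index subgroup $G\subset J_0(N)(\Q)$ to show that $\bigcap_{i}\phi_{p_i}^{-1}(\mathcal{M}_{p_i})=\emptyset$, i.e., no hypothetical unknown rational divisor can survive. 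Your outline makes no mention of this sieving step, and without it the single-prime argument does not go through. Two smaller points: for $N\neq 65$ the paper does not rely on ``published rank data'' but rather constructs a subgroup of index dividing $2$ by pulling back explicitly computed generators of $J_0^+(N)(\Q)$, and the sieve is implemented via Riemann--Roch space computations rather than enumerating all of $\widetilde{X}^{(d)}(\F_p)$, which is essential for the quartic case to be feasible.
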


For a curve $X$ over a number field $K$, denote by $X^{(d)}$ its $d$th symmetric power. The $K$-rational points on $X^{(d)}$ are exactly the $K$-rational effective degree $d$ divisors on $X$, and the $\mathrm{Gal}(\overline{K}/K)$-orbit of each point $P\in X$ defined over a degree $d$ extension  of $K$ gives rise to such a divisor. By studying the $K$-rational points on $X^{(d)}$ we can thus study the $L$-rational points on $X$ for all degree $d$ extensions $L/K$ simultaneously.

An important tool to study $X^{(d)}$ is the Abel--Jacobi map. Given a $K$-rational degree $d$ divisor $D_0$ on $X$, we define it as
\[
\iota\colon\; X^{(d)}\to J(X),\; D\mapsto [D-D_0].
\]
So when does $X/K$ have infinitely many points of degree $d$ over $K$? Certainly when there is a degree $d$ map $\rho\colon X\to \P^1$, as the inverse image of $\P^1(K)$ provides such an infinite set. More generally, when there is a map $\rho\colon X\to C$ of degree $e$ to a curve $C$ such that $C^{(f)}(K)$ is infinite, $e\cdot f\leq d$ and there exists $P\in X^{(d-ef)}(K)$: then such an infinite set is 
\[
P+\rho^*C^{(f)}(K) \subset X^{(d)}(K),
\]
and $\iota$ maps this set into a translate of the abelian subvariety $\rho^*J(C)$ of $J(X)$.

Define $P\in X^{(d)}(K)$ to be an \emph{isolated point} when it is neither in the inverse image of $\P^1(K)$ under a degree $d$ map $X\to \P^1$, nor does $\iota(P)$ lie in a translate of a positive rank abelian subvariety of $J(X)$ contained in $\iota(X^{(d)})$. Recently, Bourdon, Ejder, Liu, Odumodu and Viray \cite[Theorem 4.2]{belov} have shown that 
\begin{itemize}
    \item[(i)] $X$ has infinitely many points of degree $d$ over $K$ if and only if $X^{(d)}(K)$ contains a non-isolated point, and 
    \item[(ii)] there are finitely many isolated points in $X^{(d)}(K)$.
\end{itemize}
This result provides a road map for studying $X^{(d)}(K)$: first describe each infinite set, then determine the finite set of isolated points. This is what we have done for the modular curves listed above, using a generalisation of Chabauty's method.

Chabauty's method is, classically, a method for effectively computing the rational points on a curve in the special case when the rank $r$ of its Mordell--Weil group is strictly smaller than its genus $g$. The effectiveness is due to Coleman \cite{coleman}, who realised that Chabauty's finiteness proof \cite{chabauty} could be made effective using the machinery of locally analytic $p$-adic functions. 

While nowadays many focus on weakening the $r<g$ condition in Chabauty's method via non-abelian generalisations (see e.g. \cite{kim1} and \cite{bbbmtv}), the reach of Chabauty's method is still being increased.  After partial results of Klassen \cite{klassen}, Siksek \cite{siksek} extended Coleman's ideas to obtain an effective method for computing the $K$-rational points (when finite) on the $d$th symmetric powers of curves over any number field $K$, provided the stronger condition
\[
r<g-(d-1)
\]
is satisfied. A similar result was obtained by Derickx, Kamienny, Stein and Stoll \cite{dkss} using the theory of formal immersions, which they applied to points of degree up to 7 on several modular curves. 

Their method has led to several important results: it was used by Derickx, Najman and Siksek to prove that all elliptic curves over totally real cubic fields are modular \cite{derickx}, and helped solve the puzzle of classifying the finite groups that appear as the torsion subgroup of the Mordell--Weil group of an elliptic curve over a cubic field \cite{dzb}. In the latter article, the Chabauty-like method for symmetric powers was used to determine $X_1(22)^{(3)}(\Q)$, $X_1(25)^{(3)}(\Q)$ and the image of $X_1(65)^{(3)}(\Q)$ in $X_0(65)^{(3)}(\Q)$. Using our method, we can now describe the complete set $X_0(65)^{(3)}(\Q)$ and list the finitely many cubic points on $X_0(65)$. 

Furthermore, Siksek \cite{siksek} developed a relative version of his symmetric Chabauty method, which can be used to determine the isolated points on $X^{(d)}$ if the infinite set consists entirely of pullbacks. To be precise, given a map $\rho\colon X\to C$ of curves over $K$ of degree $d$, Siksek's relative Chabauty method can determine the remainder $X^{(d)}(K)\setminus \rho^*C(K)$ if that consists entirely of isolated points. This method was employed by the first named author \cite{box} to describe the quadratic points on $X_0(N)$ for $N\in \{43,53,57,61,65,67,73\}$.  

The curve $X_0(65)$ admits a degree 2 map $\rho\colon X_0(65)\to X_0(65)/\langle w_{65} \rangle =\colon X^+_0(65)$, where $w_{65}$ is the Atkin--Lehner involution. This quotient $X_0^+(65)$ is an elliptic curve of rank 1, and $\rho^*(X_0^+(65)(\Q))$ is an infinite set of degree 2 points.
In \cite{box}, Siksek's relative Chabauty method was used to show that the only isolated degree 2 points are sums of two cusps. For degree 3, however, we obtain for each cusp $c\in X_0(65)(\Q)$ such an infinite set 
\[
c+\rho^*(X_0^+(65)(\Q)) \subset X_0(65)^{(3)}(\Q),
\]
and Siksek's method cannot be applied anymore to study the isolated points of degree 3, as the infinite sets are not pullbacks.

It is exactly this problem that we solved, by generalising Siksek's ideas to obtain a ``partially relative symmetric Chabauty method'' that has the potential to determine the isolated degree $d$ points on a curve $X$ if the infinite sets are of the form
\[
P+\rho_1^*C_1^{(\ell_1)}(K)+\cdots+\rho_n^*C_n^{(\ell_n)}(K),
\]
where $P\in X^{(e)}(K)$ and $\rho_i\; \colon X\to C_i$ are maps of degree $d_i$ such that $e+\ell_1d_1+\cdots+\ell_nd_n=d$. The result that makes this possible is Theorem \ref{bigtheorem}. This theorem has already contributed in \cite{box2} to the proof that all elliptic curves over quartic fields not containing $\sqrt{5}$ are modular.

Our work on the modular curves $X_0(N)$ for $N\in \{53,57,61,65,67,73\}$ extends a string of papers studying the modular curves $X_0(N)$ of genus $g\in \{2,\ldots,5\}$. Bruin and Najman \cite{bruin} determined the isolated quadratic points on the hyperelliptic $X_0(N)$ of these genera with finite Mordell--Weil group.  Subsequently, \"Ozman and Siksek \cite{ozman} determined the finitely many quadratic points on the non-hyperelliptic $X_0(N)$ of genus $g\in \{2,\ldots,5\}$ with finite Mordell--Weil group, and the first named author \cite{box} described the quadratic points on those with infinite Mordell--Weil group. 

The curves for which we determine the cubic points are exactly those $X_0(N)$ of genus $g\in \{2,\ldots,5\}$ that have infinite Mordell--Weil group and for which the Chabauty condition on the rank holds true. Only two of these curves also satisfy the rank condition for quartic points: $X_0(65)$ and $X_0(57)$.
However, a genus 5 curve admits infinitely many degree four maps to $\P^1$ (see \S \ref{subsection_Qcurves}).
This prevents us from determining the quartic points on $X_0(57)$.
For $X_0(65)$ on the other hand, we are in luck.
All degree four maps to $\P^1$ defined over $\Q$ factor through the elliptic curve $X_0^+(65)$.
Applying our partial relative Chabauty method with respect to this quotient, we determine all of the isolated quartic points on $X_0(65)$.
The \texttt{Magma} \cite{magma} code to verify all computations made in this paper can be found at
\[
\texttt{\href{https://github.com/joshabox/cubicpoints/}{https://github.com/joshabox/cubicpoints/}} \;.
\]

\section{A general symmetric Chabauty theorem}
In this section we present a common generalisation of Theorems 3.2 and 4.3 in \cite{siksek}, which are Chabauty-type theorems for computing  $K$-rational points on symmetric powers of curves. 

Before this, we give an overview of Siksek's Chabauty method. 
\subsection{Uniformisers and differentials}\label{prelimsection}
Let $K$ be a finite extension of $\Q_p$ with ring of integers $R$ and residue field $k$. Consider a curve $X/K$ together with a minimal proper regular model $\mathcal{X}/R$ for $X$. We write $\widetilde{X}$ for the special fibre of $\mathcal{X}$, and similarly denote reductions of objects associated to $\mathcal{X}$ with a tilde. Denote by $\O_S$ the sheaf of functions on a scheme $S$. For a sheaf $\mathcal{F}$ on $S$, we denote by $\mathcal{F}_s$ its stalk at $s$. When $A$ is a discrete valuation ring (DVR), we denote by $\widehat{A}$ its completion. Finally, when $s\in S$ is such that $\O_{S,s}$ is a DVR,  $U\subset S$ is an open subset containing $s$ and $\mathcal{F}$ is an $\O_S$-module, we denote by $\mathrm{loc}_s$ the map $\mathcal{F}(U)\to \mathcal{F}_s$. For $x\in X^{(d)}(K)$ and $y\in \widetilde{X}^{(d)}(k)$, we denote by $D(x)$ and $D(y)$ the points in $X^{(d)}(K)$ reducing to $\widetilde{x}$ and $y$ respectively.

When $x\in X$ is non-singular, a uniformiser $t \in \widehat{\O}_{X,x}$ is called a \emph{local coordinate}  at $x\in X$. When $t$ also reduces to a uniformiser in the reduction $\widehat{\O}_{\widetilde{X},\widetilde{x}}$, we say that it is a \emph{well-behaved local coordinate} or \emph{well-behaved uniformiser}. This just means that the maximal ideal of $\widehat{\O}_{\mathcal{X},\widetilde{x}}$ is generated by $t$ and $p$ as an $R$-module. The following facts can be found for example in \cite{lorenzinitucker}. When $t$ is a well-behaved local coordinate at $x$, we can evaluate $t$ at points in the residue disc $D(x)$ of $x$, yielding a bijection between $D(x)$ and the maximal ideal of $R$.

Denote by $\Omega_{X/K}$ and $\Omega_{\mathcal{X}/R}$ (sometimes abbreviated to $\Omega$) the sheaves of regular differentials on $X$ and $\mathcal{X}$ respectively. A choice of uniformiser $t\in \widehat{\O}_{X,x}$ gives rise to the identifications $\widehat{\O}_{X,x}=K\llbracket t \rrbracket$ and $\widehat{\Omega}_{X,x}=K\llbracket t\rrbracket \dd t$. If $t$ is moreover well-behaved, we have $\widehat{\O}_{\mathcal{X},\widetilde{x}}=R\llbracket t \rrbracket$ and $\widehat{\Omega}_{\mathcal{X},\widetilde{x}}=R\llbracket t\rrbracket \dd t$. 

Now consider $\omega\in H^0(X,\Omega_{X/K})$. Since $H^0(\mathcal{X},\Omega_{\mathcal{X}/R})$ is a lattice in $H^0(X,\Omega_{X/K})$, after multiplication by a constant in $R$, we may assume that $\omega\in H^0(\mathcal{X},\Omega_{\mathcal{X}/R})$. In sum, given a well-behaved uniformiser $s$ at a point $Q\in X(K)$, we can write
\begin{align}
\label{omegaext}
\mathrm{loc}_Q(\om)=\sum_{n=0}^{\infty} a_ns^n \dd s \text{ with } a_n\in R \text{ for all } n.
\end{align}
Consider any point $P_0\in X(\overline{K})$. 
\begin{lemma}
\label{diffisolemma}
The map $\iota\colon X\to J(X),\; P\mapsto [P-P_0]$ induces an isomorphism
\[
\iota^*\colon \; H^0(J,\Omega_{J/K})\simeq H^0(X,\Omega_{X/K})
\]
of global differential forms independent of the choice of $P_0$. 
\end{lemma}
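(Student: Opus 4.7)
The plan is to establish that $\iota^*$ is an isomorphism via a dimension count together with injectivity, and then deduce independence of $P_0$ from translation-invariance of $1$-forms on $J$.

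I would begin by verifying that both sides have $K$-dimension equal to the genus $g$ of $X$. The curve side is immediate. On the Jacobian side, since $J$ is an abelian variety of dimension $g$ and every global regular $1$-form on an abelian variety is translation-invariant, $H^0(J,\Omega_{J/K})$ is canonically identified with the cotangent space at the identity, which has dimension $g$. It therefore suffices to show that $\iota^*$ is injective.

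For injectivity, my plan is to factor $\iota$ through the $g$-fold product of $X$. Consider the morphism
\[
\sigma\colon X^g\to J,\quad (P_1,\ldots,P_g)\mapsto\Bigl[\sum_{i=1}^g P_i-gP_0\Bigr].
\]
By the classical Abel--Jacobi theorem this is surjective, and it is generically \'etale (it factors through the quotient $X^g\to X^{(g)}$, which is \'etale off the diagonals, followed by the birational morphism $X^{(g)}\to J$). Consequently $\sigma^*$ is injective on global differentials. Moreover, because every $\omega\in H^0(J,\Omega_{J/K})$ is translation-invariant, the standard computation for the sum map on an abelian variety gives
\[
\sigma^*\omega=\sum_{i=1}^g\pi_i^*(\iota^*\omega),
\]
where $\pi_i\colon X^g\to X$ is the $i$-th projection. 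Hence $\iota^*\omega=0$ forces $\sigma^*\omega=0$, and therefore $\omega=0$. Combined with the dimension count, this yields the isomorphism.

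Finally, for independence of $P_0$, let $P_0'\in X(\overline{K})$ be another choice and $\iota'$ the corresponding Abel--Jacobi map. Then $\iota'=\tau_c\circ\iota$, where $\tau_c\colon J\to J$ is translation by $c=[P_0-P_0']$. Because global regular differentials on $J$ are translation-invariant, $\tau_c^*$ acts as the identity on $H^0(J,\Omega_{J/K})$, whence $\iota'^*=\iota^*$. In particular the map agrees across all Galois conjugates of $P_0$, so $\iota^*$ descends to the claimed isomorphism of $K$-vector spaces. The main technical input is the injectivity step, which rests on the Abel--Jacobi theorem; the rest is a formal manipulation with translation-invariant forms on an abelian variety.
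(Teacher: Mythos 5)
Your proof is correct. The paper itself does not give an argument here --- it simply cites \cite[Proposition~2.1]{siksek} --- so there is no in-text proof to compare against; you have supplied a self-contained argument where the paper defers to the literature. Your three ingredients are all sound and standard: (i) both spaces have dimension $g$, with translation-invariance identifying $H^0(J,\Omega_{J/K})$ with the cotangent space at the identity; (ii) injectivity of $\iota^*$ via the factorisation $\sigma = m\circ\iota^{\times g}$, the identity $m^*\omega=\sum_i p_i^*\omega$ for invariant forms, and the fact that $\sigma$ is dominant and generically \'etale (in characteristic zero, since $X^{(g)}\to J$ is birational and $X^g\to X^{(g)}$ is generically \'etale), hence $\sigma^*$ is injective on global $1$-forms; and (iii) independence of $P_0$ from $\tau_c^*=\mathrm{id}$ on invariant forms, which also lets the map descend to $K$ even when $P_0\in X(\overline K)\setminus X(K)$. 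This is essentially the classical proof one finds in treatments of the Jacobian (e.g.\ Milne's notes on abelian varieties), and it is most likely the content of the cited result in \cite{siksek} as well. No gap.
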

\begin{proof}
See e.g. \cite[Proposition 2.1]{siksek}.
\end{proof}
We shall thus use $\iota^*$ to pass between these two spaces. 

Now suppose that we have a map $\rho\colon X\to C$ between two curves over $K$. We obtain a pushforward map $\rho_*\colon \; J(X)\to J(C)$ and pullback map $\rho^*\colon \; J(C)\to J(X)$, leading to a decomposition up to isogeny
\[
J(X)\sim J(C)\times A,
\]
where $A\subset J(X)$ is an abelian subvariety. Denote by $\pi_A$ the map $J(X)\to A$. We also obtain a push-forward, or ``trace map'' on meromorphic differentials
\[
\mathrm{Tr}\colon\; \Omega_{K(X)/K}\to \Omega_{K(C)/K}.
\]
We consider $H^0(X,\Omega_{X/K})$ as a subspace of $\Omega_{K(X)/K}$, and similarly for $C$. 
\begin{lemma}
\label{difflemma}
We have
\begin{itemize}
    \item[(i)] $\Omega_{K(X)/K}=\rho^*\Omega_{K(C)/K}\oplus \mathrm{Ker}(\mathrm{Tr})$, and
\item[(ii)]$
\iota^*\pi_A^*H^0(A,\Omega_{A/K})=\mathrm{Ker}(\mathrm{Tr})\cap H^0(X,\Omega_{X/K})$.
\end{itemize}
\end{lemma}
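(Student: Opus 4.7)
For part (i), my plan is to exploit the separability of $K(X)/K(C)$, which is automatic since $K$ has characteristic zero. Setting $n := \deg \rho$, the canonical isomorphism $\Omega_{K(X)/K} \cong K(X)\otimes_{K(C)}\Omega_{K(C)/K}$ turns $\mathrm{Tr}$ into $\mathrm{Tr}_{K(X)/K(C)}\otimes \mathrm{id}$, so that $\mathrm{Tr}\circ \rho^* = n\cdot \mathrm{id}$. The endomorphism $\pi := \tfrac{1}{n}\rho^*\mathrm{Tr}$ of $\Omega_{K(X)/K}$ is then idempotent, with image $\rho^*\Omega_{K(C)/K}$ and with kernel equal to $\mathrm{Ker}(\mathrm{Tr})$ (the latter by injectivity of $\rho^*$), giving (i).

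For part (ii), I would first transport the isogeny decomposition $J(X) \sim J(C) \times A$ to global $1$-forms. Since isogenies of abelian varieties induce isomorphisms on global differentials, the isogeny $\phi := (\rho_*, \pi_A)\colon J(X) \to J(C)\times A$ yields
\[
H^0(J(X),\Omega) = (\rho_*)^* H^0(J(C),\Omega) \,\oplus\, \pi_A^* H^0(A, \Omega_{A/K}).
\]
Pulling back via $\iota^*$ and invoking Lemma~\ref{diffisolemma} together with the identity $\iota^*\circ(\rho_*)^* = \rho^*\circ \iota^*$ (a direct consequence of $\rho_*\iota = \iota\rho$) translates this to
\[
H^0(X, \Omega_{X/K}) = \rho^*H^0(C,\Omega_{C/K}) \,\oplus\, \iota^*\pi_A^* H^0(A, \Omega_{A/K}).
\]

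The remaining step is to identify this second summand with $\mathrm{Ker}(\mathrm{Tr}) \cap H^0(X, \Omega_{X/K})$. For this I would invoke the dual compatibility
\[
\mathrm{Tr} \circ \iota^* = \iota^* \circ (\rho^*)^*,
\]
expressing that the trace on curve differentials corresponds, via Abel--Jacobi, to pullback along the Picard map $\rho^*\colon J(C) \to J(X)$; this is the Serre-dual counterpart of the compatibility used above. Granted it, $\mathrm{Tr}$ is injective on $\rho^*H^0(C,\Omega_{C/K})$ (as already noted in (i)) and vanishes on $\iota^*\pi_A^* H^0(A, \Omega_{A/K})$, the latter because $\pi_A \circ \rho^* = 0$: the resulting morphism $J(C) \to A$ factors through the finite kernel of $\phi$ and hence is trivial. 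Reading off the kernel of $\mathrm{Tr}$ from this direct sum decomposition then yields (ii).

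The main obstacle is the identity $\mathrm{Tr}\circ \iota^* = \iota^*\circ (\rho^*)^*$, which does not follow formally from $\rho_*\iota = \iota\rho$; it is the Serre-dual statement and follows from duality together with the identification of $H^1(\mathcal{O}_X)$ with the Lie algebra of $J(X)$, under which $\rho_*$ on Jacobians and $\mathrm{Tr}$ on $H^0(\Omega)$ become transposes. In the write-up I would either record this as a short preparatory lemma (by computing both sides on tangent vectors at an unramified point of $\rho$) or cite a standard reference.
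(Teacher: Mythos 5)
Your argument is correct and is in substance the same as the paper's terse proof: part (i) via the idempotent $\tfrac1n\rho^*\mathrm{Tr}$ is exactly how surjectivity of the trace gives the splitting, and part (ii) comes down to showing $\iota^*\pi_A^*H^0(A,\Omega_{A/K})\subseteq\mathrm{Ker}(\mathrm{Tr})$ and then matching it up with the other side by a dimension/direct-sum argument. You are right that the key step — that $\mathrm{Tr}$ on $H^0(\Omega)$ is the transpose (under $\iota^*$) of $\rho^*\colon J(C)\to J(X)$ — does not follow formally from $\rho_*\iota=\iota\rho$ and needs Serre duality or a local tangent-vector computation; the paper leaves this implicit. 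One small imprecision: the justification of $\pi_A\circ\rho^*=0$ as ``factoring through the finite kernel of $\phi$'' is not right as stated (note $\phi\circ\rho^*=(n\cdot\mathrm{id},\pi_A\rho^*)$ is far from zero); the correct reason is simply that $A$ is the complementary abelian subvariety to $\rho^*J(C)$ and $\pi_A$ (e.g.\ $n\cdot\mathrm{id}-\rho^*\rho_*$) is by construction the projection that annihilates $\rho^*J(C)$.
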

\begin{proof}
Part (i) follows from surjectivity of the trace map. For part (ii) it suffices to compare dimensions and to check that $\iota^*\pi_A^*H^0(A,\Omega_{A/K})\subset\mathrm{Ker}(\mathrm{Tr})$.
\end{proof}

\subsection{Coleman integration}
We consider the notation from the previous subsection. Again let $K$ be a finite extension of $\Q_p$. Also let $B/K$ be any abelian variety with good reduction at all primes in $K$ above $p$. In \cite[Section II]{coleman2}, Coleman defines a pairing, now called \emph{Coleman integration}:
\begin{align}
\label{integrationpairing}
H^0(B,\Omega_{B/K})\times B(K)\to K,\;\; (\om,P)\mapsto \int_P \om.
\end{align}
We note that Coleman defines this integration in a much more general setting than this, but we shall only be concerned with the above case of abelian varieties and differentials of the first kind. 
\begin{proposition}
\label{intprops1}
The integration pairing (\ref{integrationpairing}) is
\begin{itemize}
    \item[(i)] locally analytic in $P\in B(K)$,
    \item[(ii)] $\Z$-linear on the right,
    \item[(iii)] $K$-linear on the left,
    \item[(iv)] its left-hand kernel is zero, 
    \item[(v)] its right-hand kernel is $B(K)_{\mathrm{tors}}$, and
    \item[(vi)] if $g\colon B\to B'$ is a morphism of abelian varieties over $K$, then
    \[
    \int_Pg^*\om = \int_{g(P)} \om
    \]
    for all $\om \in H^0(B',\Omega_{B'/K})$ and $P\in B(K)$. 
\end{itemize}
\end{proposition}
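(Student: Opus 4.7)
The plan is to reduce everything to properties of the formal logarithm of the abelian variety $B$. Since $B$ has good reduction, its formal group $\widehat{B}$ is defined over $R$, and the formal logarithm gives a homomorphism $\log\colon \widehat{B}(\mathfrak{m})\to \mathrm{Lie}(B)(K)$ on the kernel of reduction which, on a small enough neighbourhood of the origin, is the isomorphism obtained by integrating each invariant differential term by term in a local coordinate. Since $\widetilde{B}(k)$ is finite, for every $P\in B(K)$ there exists $N\geq 1$ with $NP\in \widehat{B}(\mathfrak{m})$, and Coleman's integral is recovered by
\[
\int_P \omega \;=\; \tfrac{1}{N}\,\omega\bigl(\log(NP)\bigr),
\]
where $\omega$ is interpreted as a linear functional on $\mathrm{Lie}(B)(K)$ via the canonical identification $H^0(B,\Omega_{B/K})\simeq \mathrm{Lie}(B)^{\vee}$. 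The preliminary task is to verify that this formula is independent of $N$ and agrees with the pairing Coleman defines in \cite{coleman2}.

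Given this setup, properties (ii), (iii) and (vi) drop out immediately. Property (iii) holds because $\omega\mapsto \omega(\log(NP))$ is $K$-linear in $\omega$. Property (ii) follows because $\log$ is a homomorphism of abelian groups and the definition is independent of the chosen $N$, so we can pick a common $N$ for $P$, $Q$ and $P+Q$. For (vi), a morphism $g\colon B\to B'$ of abelian varieties satisfies $dg\circ \log_B = \log_{B'}\circ g$ by functoriality of the formal group and of $\mathrm{Lie}$, while $(g^*\omega)(v)=\omega(dg(v))$ is the definition of pullback on cotangent vectors; combining these identities yields the desired equality.

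Property (i) follows from the explicit power series description of $\log$ on the identity residue disc, combined with the translation identity $\int_{Q+z}\omega=\int_Q \omega+\int_z\omega$ (applied, where necessary, after multiplying by an integer killing all reductions): on the residue disc of any $Q$ the integral depends analytically on well-behaved local coordinates at $\widetilde{Q}$. For (iv), if $\int_P\omega=0$ for every $P$, then in particular the function vanishes on the identity residue disc, where it is given by formal term-by-term integration of the power-series expansion of $\omega$; hence $\omega=0$. For (v), note that $\int_P\omega=0$ for all $\omega$ is equivalent, by the non-degeneracy of the evaluation pairing between $H^0(B,\Omega_{B/K})$ and $\mathrm{Lie}(B)(K)$, to $\log(NP)=0$. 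Since $\log$ on $\widehat{B}(\mathfrak{m})$ is injective up to torsion (its kernel is precisely the torsion subgroup of the formal group), this forces $NP$ to be torsion, hence $P$ to be torsion in $B(K)$. The reverse direction is immediate from (ii): if $MP=0$ then $M\int_P\omega=\int_{MP}\omega=\int_0\omega=0$.

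The main obstacle in turning this sketch into a complete proof is cleanly handling the passage between the formal-group picture above and Coleman's global definition from \cite{coleman2}; in particular, one must check that the formula $\tfrac{1}{N}\omega(\log(NP))$ genuinely computes Coleman's integral and is independent of $N$. Once that foundational identification has been made, each of the six items reduces to a short verification and none is individually deep.
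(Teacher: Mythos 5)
Your proposal is essentially correct in its logic but takes a genuinely different route from the paper: the paper's proof is a one-line citation-parade, attributing (i) and (ii) to Coleman's Theorem 2.8, (iii) to Theorem 2.8 plus Proposition 2.4, (iv) to Theorem 2.8(ii), (v) to Theorem 2.11, and (vi) to Theorem 2.7 of \cite{coleman2}, with no further argument. You instead reconstruct the pairing from scratch via the formal-group logarithm of the good-reduction Néron model, defining $\int_P\omega=\tfrac{1}{N}\omega(\log(NP))$ for $N$ killing the special fibre, and then deducing (i)--(vi) by elementary manipulations. Each such manipulation checks out: independence of $N$ follows from $\log$ being a homomorphism, (ii) and (iii) are immediate, (vi) needs only that $g$ extends to the abelian schemes (so preserves the kernel of reduction) together with functoriality of $\log$, (i) and (iv) come from the disc description of $\log$, and for (v) the kernel of $\log$ on $\widehat{B}(\mathfrak{m})$ being exactly the ($p$-power) torsion gives one inclusion, while $\Z$-linearity and char-$0$ gives the other. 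The trade-off is the one you identify yourself: your reduction rests on the unproved identification of $\tfrac{1}{N}\omega(\log(NP))$ with Coleman's globally defined integral, which is itself one of Coleman's theorems. So in the end your argument also outsources the hard analysis to \cite{coleman2}, but it packages that dependency into a single characterization lemma rather than six separate citations, which is arguably cleaner and more illuminating, at the cost of brevity and at the cost of needing to verify that the characterization is correct (which the paper evades by never committing to an explicit formula for the integral). If you make this precise, state the characterization as a lemma with a reference to the relevant result in \cite{coleman2} rather than leaving it as an acknowledged gap.
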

\begin{proof}
Parts (i) and (ii) are \cite[Theorem 2.8]{coleman2}. Part (iii) follows from part (i) and \cite[Proposition 2.4 (i), (iii)]{coleman2}. Part (iv) follows from \cite[Theorem 2.8 (ii)]{coleman2}, and Part (v) is \cite[Theorem 2.11]{coleman2}. Finally, part (vi) is \cite[Theorem 2.7]{coleman2}.
\end{proof}

\begin{lemma}
\label{ranklemma}
Suppose that $G\subset B(K)$ is a subgroup of rank $r$. Then there is a vector space $\mathcal{V}\subset H^0(B,\Omega_{B/K})$ of dimension at least $\mathrm{dim}(B)-r$ such that for all $\om\in \mathcal{V}$, we have
\[
\int_{P}\om=0 \text{ for all } P\in \overline{G},
\]
where the closure is inside the $p$-adic topology on $B(K)$.
\end{lemma}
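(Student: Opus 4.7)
The plan is to linearise Coleman integration: setting $V \colonequals H^0(B,\Omega_{B/K})$, I will consider the ``integration map''
\[
\lambda\colon B(K) \to V^* = \mathrm{Hom}_K(V,K), \quad P \mapsto \Bigl(\om \mapsto \int_P \om\Bigr),
\]
and show that the closure of $\lambda(G)$ in $V^*$ is contained in a $K$-subspace $W$ of dimension at most $r$; the annihilator $\mathcal{V} \colonequals W^{\perp} \subset V$ will then satisfy the conclusion of the lemma. Parts (iii), (ii), (i) of Proposition \ref{intprops1} respectively show that each $\lambda(P)$ is a $K$-linear functional on $V$, that $\lambda$ is a group homomorphism, and that $\lambda$ is locally analytic. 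Since $B(K)$ is a topological group and $V^*$ a topological $K$-vector space, the homomorphism property combined with local analyticity at the origin upgrades $\lambda$ to a continuous map on all of $B(K)$.

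Next I cut down the image. By Proposition \ref{intprops1}(v), $\lambda$ kills $G_{\mathrm{tors}}$. Choose $P_1,\ldots,P_r \in G$ whose images span $G \otimes_{\Z} \Q$ over $\Q$; then for any $P \in G$ some positive integer multiple $nP$ lies in $G_{\mathrm{tors}} + \Z P_1 + \cdots + \Z P_r$, and dividing by $n$ inside the $K$-vector space $V^*$ yields
\[
\lambda(P) \in W \colonequals K\lambda(P_1)+\cdots+K\lambda(P_r).
\]
The $K$-subspace $W$ has dimension at most $r$ and is closed in $V^*$, being finite-dimensional inside the finite-dimensional $K$-vector space $V^*$. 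Consequently $\lambda(G) \subset W$, and by continuity $\lambda(\overline{G}) \subset \overline{\lambda(G)} \subset \overline{W} = W$.

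Finally, taking annihilators, $\mathcal{V} \colonequals \{\om \in V : \ell(\om) = 0 \text{ for all } \ell \in W\}$ has
\[
\dim_K \mathcal{V} \;=\; \dim_K V - \dim_K W \;\geq\; \dim B - r,
\]
and for every $P \in \overline{G}$ and $\om \in \mathcal{V}$ one has $\int_P \om = \lambda(P)(\om) = 0$ because $\lambda(P) \in W$. The main technical point to verify is the passage from the merely local analyticity supplied by Proposition \ref{intprops1}(i) to global continuity of $\lambda$, needed to justify $\lambda(\overline{G}) \subset \overline{\lambda(G)}$; everything else reduces to elementary linear algebra over $K$ and the formal properties collected in Proposition \ref{intprops1}.
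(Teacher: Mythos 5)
Your proof is correct and follows essentially the same route as the paper's: both linearise the Coleman integral into a map $B(K)\to H^0(B,\Omega_{B/K})^*$, observe that torsion dies and that the $K$-span of the images of generators is an at most $r$-dimensional closed subspace, and take the annihilator. You spell out more carefully the continuity point (local analyticity plus the homomorphism property gives global continuity, so $\lambda(\overline{G})\subset\overline{\lambda(G)}\subset W$), which the paper compresses into the phrase that the pairing ``extends to an exact pairing between $H^0(B,\Omega_{B/K})$ and $B(K)\otimes K$''; your version is a bit more explicit but not materially different.
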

\begin{proof}
Suppose that $D_1,\ldots,D_r$ generate $G$ up to torsion. Then $\overline{G}\otimes K=KD_1+\cdots+KD_r$, of dimension $\leq r$. Now we find $\mathcal{V}$ because (\ref{integrationpairing}) extends to an exact pairing between $H^0(B,\Omega_{B/K})$ and $B(K)\otimes K$ by Proposition \ref{intprops1}.
\end{proof}
When $B=J(X)/K$ and $G=J(X)(K)$, we call this space $\mathcal{V}$ the \emph{space of annihilating differentials}. Moreover, when we have a map $\rho\colon X\to C$ as in the previous section, we call $\mathcal{V}\cap \mathrm{Ker}(\mathrm{Tr})$ the \emph{space of annihilating differentials with trace zero}. This is the pullback along $\pi_A$ of the space of annihilating differentials on $A$, where $A$ is such that $J(X)\sim J(C)\times A$.

More notation: by pulling back along $\iota$, we obtain integrals
\[
H^0(X,\Omega_{X/K})\times X(K)^2\to K,\;\;(\om,Q,P)\mapsto \int_Q^P\om :=\int_{[P-Q]}(\iota^*)^{-1}(\om).
\]

\begin{proposition}
\label{intprops2}
\begin{itemize}\item[(i)]For $P,Q\in X(K)$ such that $P\in D(Q)$, and a well-behaved uniformiser $s$ at $Q$, we have for each $\om\in H^0(X,\Omega_{X/K})$
\[
\int_{Q}^P\om = \sum_{n=0}^{\infty}\frac{a_n}{n+1}s(P)^{n+1},
\]
where $\mathrm{loc}_Q(\om)=\sum_n a_n s^n$ as in (\ref{omegaext}). We call such an integral between points in the same residue class a \emph{tiny integral}.
\item[(ii)] If $\rho\colon X\to C$ is a non-constant morphism of curves over $K$ with good reduction, then
\[
\int_{\rho^*D}\om =\int_D\mathrm{Tr}(\om)
\]
for all $\om \in H^0(X,\Omega_{X/K})$ and every degree 0 divisor $D$ on $C$ defined over $K$. 
\end{itemize}
\end{proposition}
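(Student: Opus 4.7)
For part (i), the plan is to identify the Coleman integral on the residue disc $D(Q)$ with the formal antiderivative of $\omega$. Consider the function $F\colon D(Q)\to K$, $P\mapsto \int_Q^P\omega$. By the definition of the integral on $X$ (pulling back along $\iota$) together with Proposition \ref{intprops1}(i), $F$ is locally analytic; and $F(Q)=0$ since $[Q-Q]=0$ (using Proposition \ref{intprops1}(ii)). On the other hand, since $a_n\in R$, $|s(P)|<1$ for $P\in D(Q)$, and the denominators $n+1$ only slow down the decay polynomially in the $p$-adic valuation, the formal antiderivative $G(s):=\sum_{n\geq 0}\frac{a_n}{n+1}s^{n+1}$ converges on $D(Q)$ and defines a locally analytic function with $G(Q)=0$ and formal derivative $\omega$. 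By uniqueness of a locally analytic antiderivative on a single disc with prescribed value at a point (which can be established by reading off Taylor coefficients), we conclude $F=G$, which is the desired formula.

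For part (ii), the plan is to exploit the functoriality of Coleman integration supplied by Proposition \ref{intprops1}(vi). The pullback morphism $\rho^*\colon J(C)\to J(X)$ sends $[D]$ to $[\rho^*D]$. Setting $\eta:=(\iota_X^*)^{-1}(\omega)\in H^0(J(X),\Omega_{J(X)/K})$, we then compute
\[
\int_{\rho^*D}\omega \;=\; \int_{[\rho^*D]}\eta \;=\; \int_{\rho^*[D]}\eta \;=\; \int_{[D]}(\rho^*)^*\eta,
\]
where the last equality is Proposition \ref{intprops1}(vi). To conclude, I need to verify that $(\rho^*)^*\eta = (\iota_C^*)^{-1}\mathrm{Tr}(\omega)$ in $H^0(J(C),\Omega_{J(C)/K})$, for then the right-hand side equals $\int_D \mathrm{Tr}(\omega)$ by definition of integration on $C$. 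This identity is the classical statement that, under the Abel--Jacobi identifications in Lemma \ref{diffisolemma}, the map on global differentials induced by the Jacobian pullback along $\rho^*$ coincides with the trace map $\mathrm{Tr}\colon H^0(X,\Omega_{X/K})\to H^0(C,\Omega_{C/K})$.

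The main obstacle is the compatibility needed at the end of part (ii): that the pullback on Jacobian differentials induced by $\rho^*$, translated via the Abel--Jacobi isomorphism, agrees with the trace map on curve differentials. Everything else in the proof is either unpacking of definitions or a direct appeal to Coleman's framework via Proposition \ref{intprops1}. This compatibility, however, is a standard and classical fact which can be verified directly from the definitions of $\mathrm{Tr}$ and of $\rho^*$ on Jacobians, or extracted from a standard reference on Jacobian varieties, and so should not pose serious difficulty.
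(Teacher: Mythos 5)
Your proposal takes essentially the same route as the paper, which simply cites Coleman's Fundamental Theorem of Calculus for part (i) and Siksek's \cite[Lemma 2.2]{siksek} for part (ii); you have unpacked the content of those citations. Two small points. In (i), you verify that $G(s)=\sum_{n\geq 0}\frac{a_n}{n+1}s^{n+1}$ is a locally analytic antiderivative of $\omega$ vanishing at $Q$, and you establish that $F(P)=\int_Q^P\omega$ is locally analytic with $F(Q)=0$, but you never actually assert that the \emph{derivative} of $F$ equals $\omega$; that assertion is precisely Coleman's Fundamental Theorem of Calculus \cite[Proposition~2.4(ii)]{coleman2}, and without it the ``uniqueness of antiderivatives'' step does not close. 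So you should invoke the FTC explicitly rather than rely only on local analyticity. In (ii), your chain $\int_{\rho^*D}\omega=\int_{\rho^*[D]}\eta=\int_{[D]}(\rho^*)^*\eta$ via Proposition~\ref{intprops1}(vi) is correct, and the remaining compatibility---that under the Abel--Jacobi identifications of Lemma~\ref{diffisolemma} the cotangent map of $\rho^*\colon J(C)\to J(X)$ is the trace $\mathrm{Tr}\colon H^0(X,\Omega)\to H^0(C,\Omega)$---is indeed the classical fact that makes the argument go (it follows because $\rho^*$ on Jacobians is $\rho^*$ on $H^1(\mathcal{O})$ on tangent spaces, and trace is its Serre dual). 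The paper sidesteps this by citing Siksek, where the same functoriality argument appears; your version is a legitimate in-line reconstruction of it.
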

\begin{proof}
Part (i) follows from Proposition \ref{intprops1} (i) together with the Fundamental Theorem of Calculus proved in \cite[Proposition 2.4 (ii)]{coleman2}. Part (ii) is \cite[Lemma 2.2]{siksek}.
\end{proof}
\subsection{An overview of the symmetric Chabauty--Coleman method}\label{overviewsection}
We refer the reader to \cite{wetherell} and \cite{poonen} for a clear overview of the Chabauty--Coleman method in the classical case. Here we give an overview of Chabauty and Coleman's original ideas in the symmetric power setting, after which we explain what needs to be changed in the relative case. For simplicity we work over $\Q$, but  everything generalises to number fields.

We consider an integer $d$, a prime $p$ and a curve $X/\Q$ of genus $g_X\geq 2$ and minimal proper regular model $\mathcal{X}/\Z_p$ for $X_{\Q_p}$. To determine $X^{(d)}(\Q)$, it suffices to determine the rational points contained in each of the residue discs separately.  So consider $\widetilde{\mathcal{Q}}\in X^{(d)}(\F_p)$ and its inverse image under the reduction map, the residue disc $D(\widetilde{\mathcal{Q}})\subset X^{(d)}(\Q_p)$. Assume there exists $D_0\in X^{(d)}(\Q)$. We use it to define the Abel--Jacobi map:
\[
\iota\colon X^{(d)}\to J(X),\;\; D\mapsto [D-D_0].
\]
Chabauty's idea was to consider the diagram
\begin{equation}
\label{diag}
\begin{tikzcd}
D(\widetilde{\mathcal{Q}})\cap X^{(d)}(\Q) \arrow[rr,"\iota"] \arrow[d] &  & J(X)(\Q) \arrow[d] \\
D(\widetilde{\mathcal{Q}}) \arrow[rr,"\iota"]         &  & J(X)(\Q_p)    
\end{tikzcd}
\end{equation}
and instead determine 
\begin{align}
\label{int}
\iota(D(\widetilde{\mathcal{Q}}))\cap \overline{J(X)(\Q)},
\end{align}
where the closure is inside the $p$-adic topology on $J(X)(\Q_p)$. This set contains $\iota(D(\widetilde{\mathcal{Q}})\cap X^{(d)}(\Q))$. Let $r$ be the rank of $J(X)(\Q)$. By Lemma \ref{ranklemma}, we obtain a space $\mathcal{V}\subset H^0(X,\Omega_{X/K})$ of dimension $\mathrm{dim}(\mathcal{V})\geq g_X-r$ such that for all $\om\in \mathcal{V}$, we have
\[
\int_D\om=0 \text{ for all } D\in \overline{J(X)(\Q)}.
\]
In particular, when the \emph{Chabauty condition}
\begin{align}
\label{chabautycond}
r<g_X-(d-1)
\end{align}

is satisfied, there are $d$ such linearly independent differentials, and dimensions suggest that
\[
\mathcal{Z}_p(\widetilde{\mathcal{Q}})\colon=\left\{\mathcal{P}\in D(\widetilde{\mathcal{Q}}) \mid \int_{\iota(\mathcal{P})}\om =0 \text{ for all }\om\in \mathcal{V}\right\}
\]
is finite.  In the case $d=1$, Chabauty \cite{chabauty} proved that $\overline{J(\Q)}\cap \iota(D(\widetilde{\mathcal{Q}}))$ is indeed finite when (\ref{chabautycond}) is satisfied. For $d>1$, $X^{(d)}(\Q)$ can still be infinite even when (\ref{chabautycond}) holds, e.g. due to the existence of a map $\rho\colon X\to C$ of degree $\leq d$. For now, however, let us assume that $X$ satisfies (\ref{chabautycond}), and that we have no reason to believe that $X^{(d)}(\Q)\cap D(\widetilde{\mathcal{Q}})$ is infinite regardless. 

It was Coleman's idea \cite{coleman} to introduce integration of differentials and compute $\mathcal{Z}_p(\widetilde{\mathcal{Q}})$ instead. Such zero sets can be computed for $d=1$ by evaluating  Coleman integrals between $\Q_p$-rational points.
In \texttt{Sage} on hyperelliptic curves, the computation of Coleman integrals over $\Q_p$ has been implemented  by Balakrishnan, Bradshaw, and Kedlaya \cite{BBK-OddHC-ColemanIntegration}, \cite{Balakrishnan-EvenHC-ColemanIntegration}, while Balakrishnan and Tuitman \cite{BalakrishnanTuitman} wrote a \texttt{Magma} implementation for plane curves. For $d>1$, however, one would need to evaluate Coleman integrals between points $P,Q\in X(K)$ for extensions $K/\Q_p$ of degree $>1$; this has only been done for superelliptic curves over unramified extensions of $\Q_p$, by recent work of Best \cite{Alex-ColemanIntegration-Unramified-Superelliptic}.

Instead, therefore, we shall restrict our attention to tiny (i.e. directly computable) integrals only: this suffices when combining the information from multiple primes $p$ using the Mordell--Weil sieve (see Section \ref{section_MWsieve}). The drawback here is that information on $J(X)(\Q)$ is needed. Thanks to the sieve, we need to consider only residue discs $D(\widetilde{\mathcal{Q}})$ containing a known point $\mathcal{Q}\in X^{(d)}(\Q)$. For each such point $\mathcal{Q}$, (assuming that $X^{(d)}(\Q)$ is finite) there is a prime $p$ such that $X^{(d)}(\Q)\cap D(\widetilde{\mathcal{Q}})=\{\mathcal{Q}\}$. To compute $X^{(d)}(\Q)$, it then suffices to have a criterion to decide whether $\mathcal{Z}_p(\widetilde{\mathcal{Q}})=\{\mathcal{Q}\}$. 

Given the known point $\mathcal{Q}$, we can choose $D_0=\mathcal{Q}$ to define $\iota$. Then for $\mathcal{P}\in D(\widetilde{\mathcal{Q}})$, the integral
$
\int_{\iota(\mathcal{P})}\om
$
is a sum of $d$ tiny integrals. By studying the power series obtained from these tiny integrals via Proposition \ref{intprops2} (i), Siksek \cite[Theorem 3.2]{siksek} found a criterion for deciding whether  $Z_p(\widetilde{\mathcal{Q}})=\{\mathcal{Q}\}$.

\subsection{An overview of our relative Chabauty--Coleman method}
We continue the notation from the previous subsection. We now assume that we \emph{do} have reason to believe that $X^{(d)}(\Q)\cap D(\widetilde{\mathcal{Q}})$ is infinite, due to the existence of a curve $C$ with minimal proper regular model $\mathcal{C}/\Z_p$,  a map $\rho\colon\mathcal{X}\to \mathcal{C}$ such that $\rho\colon X\to C$ has degree $f$, a positive integer $e$ such that $e\cdot f\leq d$, and known points $P\in X^{(d-ef)}(\Q)$ and $Q\in C^{(e)}(\Q)$ such that $\mathcal{Q}:=P+\rho^*Q\in D(\widetilde{\mathcal{Q}})$. Indeed, we then have a family
\[
P+\rho^*C^{(e)}(\Q)\subset X^{(d)}(\Q)
\]
intersecting $D(\widetilde{\mathcal{Q}})$. Even when $X$ satisfies the Chabauty condition (\ref{chabautycond}), the approach from the previous subsection fails when either $C^{(e)}(\Q)$ is infinite or when $C^{(e)}$ does not satisfy the Chabauty condition $r_C<g_C-(e-1)$ itself, where $r_C=\mathrm{rk}J(C)(\Q)$ and $g_C$ is the genus of $C$.

As in Section \ref{prelimsection}, there is an abelian variety $A\subset J(X)$ such that $J(X)\sim J(C)\times A$, and we define $\pi_A\colon J(X)\to A$. As before, define the Abel--Jacobi map $\iota$ using $\mathcal{Q}$. We now replace $J(X)$ in Diagram~\eqref{diag} by $A$, and note that
\[
\pi_A\circ\iota \left(P+\rho^*C^{(e)}(\Q)\right)=\{0\},
\]
so the entire family has been collapsed to a single point on $A$. It may be easier to determine  $\overline{A(\Q)}\cap \pi_A(\iota(D(\widetilde{\mathcal{Q}})))$ than it is to determine $\overline{J(X)(\Q)}\cap\iota(D(\widetilde{\mathcal{Q}}))$. Again by Lemma \ref{difflemma}, we find a space $\mathcal{V}\subset H^0(A,\Omega_{A/K})$ of dimension $\mathrm{dim}(\mathcal{V})\geq \mathrm{dim}(A)-\mathrm{rank}(A(\Q))$ such that for all $\om\in \mathcal{V}$
\[
\int_D\om =0 \text{ for each } D\in \overline{A(\Q)}.
\]
Let $r_X$ be the rank of $J(X)(\Q)$, $r_C$ the rank of $J(C)(\Q)$, and $g_X$ and $g_C$ be the genera of $X$ and $C$ respectively. If the Chabauty condition 
\begin{equation}
\label{generalchabcond}
r_X-r_C<g_X-g_C-(d-1)
\end{equation}
is satisfied, then $\mathrm{dim}(\mathcal{V})\geq d$ and the dimensions suggest that the common zero set of $\int\om$ for $\om\in \mathcal{V}$ has finite intersection with $\pi_A\circ \iota(D(\widetilde{\mathcal{Q}}))$. Define 
\[
\mathcal{Z}_{p,A}(\widetilde{\mathcal{Q}}):=\left\{\mathcal{P}\in D(\widetilde{\mathcal{Q}}) \mid \int_\mathcal{Q}^\mathcal{P}\om =0 \text{ for all } \om \in \mathcal{V}\cap \mathrm{Ker}(\mathrm{Tr})\right\},
\]
where $\mathrm{Tr}\colon\; \Omega_{K(X)/K}\to \Omega_{K(C)/K}$ is the trace map. By Lemma \ref{difflemma} (ii) and Proposition \ref{intprops1} (vi), this is the inverse image to $D(\widetilde{\mathcal{Q}})$ of the common zero set of the integrals $\int\om$ for $\om \in \mathcal{V}$. 

Analogous to the case of the previous section, we now desire a criterion to decide if \[
\mathcal{Z}_{p,A}(\widetilde{\mathcal{Q}})=(P+\rho^*C^{(e)}(\Q))\cap D(\widetilde{\mathcal{Q}}).
\]
This is the purpose of Theorem \ref{bigtheorem}.
\subsection{The main theorem}
Consider a point $Q$ on a curve $X$ over a field $K$, and a regular 1-form $\om\in H^0(X,\Omega_{X/K})$. We expand $\om$ around $Q$ in terms of a uniformiser $t_Q$ at $Q$, giving $\mathrm{loc}_Q(\om)=\sum_{j\geq 0} a_jt_Q ^j\dd t_Q$. We define 
\begin{align*}
v(\om,t_Q,k)&:=\left(-a_0, a_1,\ldots, (-1)^{k}a_{k-1}\right).
\end{align*}
When $\mathcal{Q}$ is an effective $K$-rational divisor on $X$, we denote by $K(\mathcal{Q})$ the (Galois) extension obtained by adjoining to $K$ all points in the support of $\mathcal{Q}$. 
\begin{theorem}\label{bigtheorem}
 Let $\rho_j\colon X\rightarrow C_j$ for $j\in \{1,\ldots,h\}$ be degree $d_j$ maps of curves over a number field $K$, and consider given an effective divisor
 \[
 \mathcal{Q}= \mathcal{Q}_0+ \mathcal{Q}_1+\cdots+\mathcal{Q}_h, \text{ where } \mathcal{Q}_0\in X^{(e)}(K) \text{ and } \mathcal{Q}_j\in \rho_j^*C_j^{(\ell_j)}(K) \text{ for } j\geq 1.
 \]
 Let $n=e+d_1\ell_1+\cdots+d_h\ell_h$. Suppose that $\rr$ is a prime in $\O_K$ of good reduction for $X$ and each $C_j$.  Let $p$ be the rational prime contained in $\rr$. 
 \begin{itemize}
     \item[(1)] Assume that the supports of $\mathcal{Q}_1,\ldots,\mathcal{Q}_h$ are pairwise disjoint, and no point in the support of any $\mathcal{Q}_i$ for $i\geq 1$ has ramification degree under $\rho_i$ divisible by $p$. 
     \item[(2)] Let $N$ be the maximum of the ramification indices of $p$ in $K(Q_i,Q_j)$ for $i,j\in \{1,\ldots,k\}$. Assume that $p\geq N+2$. 
 \end{itemize}
 Write $\lambda=\ell_1+\cdots+\ell_h$. Let $\mathcal{V}_0$ be the space of annihilating differentials on $X$ with trace zero with respect to each $\rho_j$, and consider a basis $\widetilde{\omega}_1,\ldots,\widetilde{\omega}_q$ for the image of $\mathcal{V}_0\cap H^0(\mathcal{X},\Omega)$ under the mod $\rr$ reduction map on differentials. 
 Let $\pp$ be a prime in $K(\mathcal{Q})$ above $\rr$, and denote reductions of points with respect to $\pp$ with a tilde.  Write $\mathcal{Q}=n_1Q_1+\cdots+n_kQ_k$ with $Q_1,\ldots,Q_k\in X$ distinct points and each $n_i\geq 1$, and let $t_{\widetilde{Q}_i}$ be a uniformiser at $\widetilde{Q}_i$ for each $i$. 
 \begin{itemize}
     \item[(3)] Assume that the matrix \[
\widetilde{\mathcal{A}}:=\begin{pmatrix}
v(\widetilde{\om}_1,t_{\widetilde{Q}_1},n_1) & v(\widetilde{\om}_1,t_{\widetilde{Q}_2},n_2) & \cdots & v(\widetilde{\om}_1,t_{\widetilde{Q}_k},n_k) \\
v(\widetilde{\om}_2,t_{\widetilde{Q}_1},n_1) & v(\widetilde{\om}_2,t_{\widetilde{Q}_2},n_2) & \cdots & v(\widetilde{\om}_2,t_{\widetilde{Q}_k},n_k) \\
\vdots & \ddots & \ddots & \vdots  \\
v(\widetilde{\om}_q,t_{\widetilde{Q}_1},n_1) & v(\widetilde{\om}_q,t_{\widetilde{Q}_2},n_2) & \cdots & v(\widetilde{\om}_q,t_{\widetilde{Q}_k},n_k) 
\end{pmatrix}  
\]
has rank $n-\lambda$. 
\end{itemize}
Then every $\mathcal{P}\in X^{(n)}(K)$ in the mod $\rr$ residue disc of $\mathcal{Q}$ is in fact contained in  \[
\mathcal{Q}_0+\rho_1^*C_1^{(\ell_1)}(\overline{K})+\cdots+\rho_h^*C_h^{(\ell_h)}(\overline{K}).
\]
\end{theorem}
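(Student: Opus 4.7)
The plan is to adapt Siksek's symmetric Chabauty framework (Sections 2.3--2.4) to this partially-relative setting. The key observation, via Lemma \ref{difflemma}(ii) and Proposition \ref{intprops1}(vi), is that every $\omega\in\mathcal{V}_0$ is pulled back from an annihilating differential on $A$, so $\int_{\mathcal{Q}}^{\mathcal{P}}\omega=0$ for every $K$-rational $\mathcal{P}$ in $D(\widetilde{\mathcal{Q}})$. The goal is then to show, using the rank hypothesis (3), that these vanishings characterise the pullback family within the residue disc.

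First I would parametrise $D(\widetilde{\mathcal{Q}})$. Choose well-behaved uniformisers $t_i$ at $\widetilde{Q}_i$. Over a splitting field of $K_\pp(\mathcal{Q})$, any $\mathcal{P}$ in the disc decomposes as $\mathcal{P}_1+\cdots+\mathcal{P}_k$ with $\mathcal{P}_i\in X^{(n_i)}$ reducing to $n_i\widetilde{Q}_i$. The elementary symmetric polynomials $\sigma_{i,1},\dots,\sigma_{i,n_i}$ in the $t_i$-values on $\mathcal{P}_i$ lie in the maximal ideal of $\mathcal{O}_{K_\pp}$ and coordinatise the $i$-th local piece; condition (1) ensures these pieces remain disjoint after reduction. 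For $\omega\in\mathcal{V}_0\cap H^0(\mathcal{X},\Omega)$ with $\mathrm{loc}_{\widetilde{Q}_i}(\omega)=\sum_{j\geq 0} a_{i,j}t_i^j\,\dd t_i$, Proposition \ref{intprops2}(i) combined with Newton's identities gives
\[
\int_{\mathcal{Q}}^{\mathcal{P}}\omega=\sum_{i=1}^k\sum_{m=1}^{n_i}(-1)^{m-1}a_{i,m-1}\sigma_{i,m}+(\text{higher-order in the }\sigma\text{'s}),
\]
whose mod-$\pp$ reduction is precisely the row of $\widetilde{\mathcal{A}}$ associated to $\widetilde{\omega}$ paired with the vector $(\sigma_{i,m})$.

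Next I would exploit the pullback family as a built-in source of solutions. By Proposition \ref{intprops2}(ii) applied to each $\rho_j$, every point of $\mathcal{Q}_0+\sum_j\rho_j^*C_j^{(\ell_j)}(\overline{K})$ lying in $D(\widetilde{\mathcal{Q}})$ automatically satisfies $\int_{\mathcal{Q}}^{\mathcal{P}}\omega=0$ for every $\omega\in\mathcal{V}_0$. This provides a $\lambda$-dimensional analytic subfamily inside the zero locus of the integral constraints, whose tangent vectors at $\mathcal{Q}$ span a $\lambda$-dimensional subspace of $\ker\widetilde{\mathcal{A}}$. Hypothesis (3) forces $\dim\ker\widetilde{\mathcal{A}}=\lambda$, so this subspace already exhausts the kernel. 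A Hensel/implicit-function argument, justified by the linear-terms-dominate analysis below, then upgrades this equality of tangent spaces to an equality of analytic sets: the zero locus of the integrals in $D(\widetilde{\mathcal{Q}})$ is exactly the pullback family, placing any $K$-rational $\mathcal{P}\in D(\widetilde{\mathcal{Q}})$ inside $\mathcal{Q}_0+\sum_j\rho_j^*C_j^{(\ell_j)}(\overline{K})$.

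The main obstacle is the $\pp$-adic analysis justifying the Hensel step: one must show that the higher-order terms in the power series $\int_{\mathcal{Q}}^{\mathcal{P}}\omega$ -- which contain denominators $1/(j+1)$ from integration together with cross-terms from Newton's identities -- are $\pp$-adically strictly smaller than the linear part after appropriate normalisation, so that the rank of $\widetilde{\mathcal{A}}$ over the residue field faithfully predicts the dimension of the full analytic zero set. The ramification bound $p\geq N+2$ in condition (2) is exactly what makes $v_\pp(1/(j+1))$ small enough (for $j<n$) to render this true, while disjointness in condition (1) prevents the local pieces near different $\widetilde{Q}_i$ from interacting in the expansion. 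Absent either, the reduction-mod-$\pp$ linearisation would no longer control $K$-rational solutions, and the argument would collapse.
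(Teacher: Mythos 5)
Your overall strategy---use trace-zero annihilating differentials so the tiny integrals vanish, rewrite the resulting power-series equations using elementary symmetric polynomials so that the linearisation at $\mathcal{Q}$ is exactly $\widetilde{\mathcal{A}}$, and then argue that the full-rank condition pins down the zero locus---is the correct intuition, but there are real gaps that prevent it from being a complete proof, and the route you sketch is also not the one the paper takes.

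The central gap is the ``Hensel/implicit-function argument'': you assert that since the pullback family supplies a $\lambda$-dimensional subspace of $\ker\widetilde{\mathcal{A}}$ and hypothesis (3) forces $\dim\ker\widetilde{\mathcal{A}}=\lambda$, the analytic sets must coincide, but you never construct the $\lambda$-dimensional tangent subspace coming from pullbacks. Doing so requires knowing precisely how deformations of the form $\rho_j^*(\cdot)$ look in the elementary-symmetric coordinates at each $Q_i$, together with the linear relations that trace-zero forces among the columns of $\widetilde{\mathcal{A}}$; those are exactly the contents of Lemma~\ref{uniformiserlemma}, Lemma~\ref{localtracelemma}, and Proposition~\ref{coefficients conditions}, which you never invoke. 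Without them, ``rank $n-\lambda$'' is not self-evidently the maximal possible rank, and you cannot even state that $\ker\widetilde{\mathcal{A}}$ contains the pullback tangent directions. A second, independent gap is Galois descent: for a $K$-rational $\mathcal{P}$, the quantities $\sigma_{i,\ell}$ a priori live in the much larger field $K_\pp(\mathcal{P})$, and the argument needs them in $L_\pp=K_\pp(\mathcal{Q})$ before one can work with the $\O_{L_\pp}$-Jacobian at all. The paper handles this with an explicit Galois-equivariance argument; your sketch never addresses it, and an implicit-function theorem over $\overline{K}_\pp$ would not by itself yield the conclusion over the right base. Finally, your explanation of the bound $p\geq N+2$ is wrong: after switching to elementary symmetric polynomials the integration denominators $1/(j+1)$ disappear from the linear part; what actually has to be controlled are the denominators of the cross-terms arising from Newton's identities (Lemma~\ref{prelimlemma}(1)), which have denominator $\ell\geq 2$, and the bound $p\geq N+2$ feeds into Lemma~\ref{lemma:first-inequality} with $T=1$ to suppress these $\bmod\ \pp^{\nu+1}$.

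For comparison, the paper's proof avoids any implicit-function theorem. It introduces explicit ``predicted'' elementary symmetric functions $r_{i,\ell}$ of the nearest candidate pullback point, subtracts $\int \rho^*\mathrm{Tr}(\om)=0$ from the integral equation to center the linear system at these predictions, passes to the reduced matrix $\mathcal{B}$ (of genuine full rank $n-\lambda$) obtained by deleting one column per imposed trace constraint, and then runs a minimality-and-contradiction argument on $\nu=\min v_\pp(e_{i,\ell}-r_{i,\ell})$: the controlled higher-order terms force $\mathcal{B}\cdot\mathbf{v}\equiv 0\bmod\pp^{\nu+1}$ and hence $\nu=\infty$. At that point Lemma~\ref{pull-back condition lemma}, which you also omit, is what converts $e_{i,\ell}=r_{i,\ell}$ into the statement that $\mathcal{P}$ actually is a pullback. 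This is a more elementary and quantitative route than the one you propose, and it is what actually nails down the prime bound.
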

We postpone the proof of this theorem to Section \ref{proofsec}. 
\begin{remark}
By Proposition \ref{coefficients conditions} in the next section, $n-\lambda$ is in fact the maximal possible rank of this matrix. Moreover, note that condition (3) is not satisfied when two distinct points $Q_i\neq Q_j$ have the same reduction mod $\pp$. If they do, at least two columns in $\widetilde{\mathcal{A}}$ would agree, reducing its rank to less than $n-\lambda$ due to the nature of the linear equations between the columns obtained from Proposition \ref{coefficients conditions}. 
\end{remark}
\begin{remark}
 Our lower bound $p\geq N+2$ leads to a general lower bound $p\geq n(n-1)+2$.  In particular, for $n=2$, $p\geq 5$ suffices; for $n=3$, $p\geq 11$ suffices; and for $n=4$, $p\geq 17$ suffices. 
\end{remark}
\begin{remark}
This theorem generalises \cite[Theorems 3.2 and 4.3]{siksek}. More precisely, the case $\mathcal{Q}=\mathcal{Q}_0$ corresponds to \cite[Theorem 3.2]{siksek} and the case $\mathcal{Q}=\mathcal{Q}_1$ corresponds to \cite[Theorem 4.3]{siksek}. In both cases, we significantly improve the lower bound on the prime $p$ compared to \cite{siksek}.  Part of the improvement is due to the introduction in the proof of elementary symmetric polynomials replacing power sums. This removes denominators in the power series expansion and hence in the matrix $\widetilde{\mathcal{A}}$; this is an idea also used in \cite{dkss}. Another improvement comes from a Galois theory argument, exploiting the fact that $\mathcal{P}$ is $K$-rational. 
\end{remark}

\begin{remark}
When $p$ is small, it sometimes happens that $X^{(n)}(\Q)$ surjects onto $X^{(n)}(\F_p)$, in which case this theorem may be used to determine $X^{(n)}(\Q)$ directly (i.e. without sieving). This has the advantage of requiring no information on the generators of the Mordell--Weil group of $J(X)$.  When $\mathcal{Q}=\mathcal{Q}_0$ and the annihilating differentials come from a rank zero quotient of $J(X)$, it is best to use the formal immersion criterion of Derickx, Kamienny, Stein and Stoll \cite[Chapter 3 Proposition 3.7]{derickxthesis}  instead, which gives the same statement but works for all primes $p\geq 2$ of good reduction. See for example \cite[\S6.1]{dzb} where this is done for $X_1(22)^{(3)}(\Q)$ and $X_1(25)^{(3)}(\Q)$ with $p=3$. Our theorem covers all relative and positive rank cases, and can often be used with $p=3$.
\end{remark}
\begin{remark}
While we have stated the theorem for multiple maps $\rho_1,\ldots,\rho_h$, we only need the case $h=1$ in our examples, in which case $\mathcal{P}\in \mathcal{Q}_0+\rho_1^*C_1^{(\ell_1)}(K)$ (the pull-back part is $K$-rational since $\mathcal{P}$ and $\mathcal{Q}_0$ are). Also note that condition (1) is quite limiting when $h>1$. If, for example, there are two degree 2 maps $\rho_1\colon X\to C_1$ and $\rho_2\colon X\to C_2$ and $Q\in X(\Q)$ then $\rho_1^*\rho_1(Q)+\rho_2^*\rho_2(Q)\in X^{(4)}(\Q)$ does not satisfy (1). 
\end{remark}
\subsection{Trace maps and ramification}\label{Trace maps} We note that all statements in this section are essentially well-known results in algebraic number theory. We nonetheless give proofs because we could not find the exact statements in the literature.

Any map $\rho\colon \; X\to C$ of curves  as in the statement of Theorem \ref{bigtheorem}  may be ramified at certain points of degree at most $d$. 
\begin{example}
\label{example:67_ramified_point}
When $\rho$ is the quotient map $\rho\colon\; X_0(67)\to X_0^+(67)$, there is a non-cuspidal rational point $Q\in X_0(67)(\Q)$ that ramifies. Ramifying here means that $w_{67}(Q)=Q$. In order to deal with degree 3 effective divisors such as $3Q=Q+\rho^*\rho(Q)\in X_0(67)^{(3)}(\Q)$, we study in more detail how differentials transform under (ramified) maps of curves.
\end{example}
First, we briefly recall some facts about maps between Krull domains. We consider a integral extension $A\to B$ of Krull domains. Krull domains can be viewed as higher-dimensional generalisations of Dedekind domains; see e.g.\ \cite{matsumura} for their definition and theory. Consider a minimal prime ideal $\pp\subset A$. Then the localisation $A_{\pp}$ is a DVR.  We denote by $k(\pp)$ the residue field of $A_{\pp}$, and for $f\in A$ by $f(\pp)$ the image of $f$ in $k(\pp)$. Now $\mathrm{Frac}(B)/\mathrm{Frac}(A)$ is a finite extension. This field extension comes with a trace map
\[
\mathrm{Tr}_{B/A}\colon \mathrm{Frac}(B)\to \mathrm{Frac}(A),
\]
 which by integrality of $B/A$ satisfies $\mathrm{Tr}_{B/A}(B)\subset A$. Let $\qq_1,\ldots,\qq_r$ be the minimal prime ideals of $B$ above $\pp$, and denote by $e_i\geq 1$ the valuation of $\pp$ in the DVR $B_{\qq_i}$.  We similarly obtain local trace maps $\mathrm{Tr}_{\widehat{B}_{\qq_i}/\widehat{A}_{\pp}}$ and $\mathrm{Tr}_{k(\qq_i)/k(\pp)}$. Denote by $\mathrm{loc}_{\rr}\colon \mathrm{Frac}(R)\to \mathrm{Frac}(\widehat{R}_{\rr})$ the localisation map at the minimal prime ideal $\rr$ of a Krull domain $R$. 
\begin{lemma}
\label{tracelemma}
   We have
 \[
 \mathrm{loc}_{\pp}\circ \mathrm{Tr}_{B/A} = \sum_i  \mathrm{Tr}_{\widehat{B}_{\qq_i}/\widehat{A}_{\pp}}\circ \mathrm{loc}_{\qq_i}.
 \]
 Moreover, for each $f\in B$ we have
 \[
 \mathrm{Tr}_{B/A}(f)(\pp)=\sum_i e_i \mathrm{Tr}_{k(\qq_i)/k(\pp)}(f(\qq_i)).
 \]
 \end{lemma}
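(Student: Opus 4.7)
The plan is to reduce both identities to classical trace formulas for finite separable extensions of complete DVRs, and then sum over the primes $\qq_i$ lying above $\pp$.

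For part~(i), I would first pass to the localisation $B_\pp := (A\setminus\pp)^{-1}B$. Since $A$ is Krull (hence normal) and $B$ is integral over $A$, the going-down theorem applies, so the maximal ideals of $B_\pp$ are precisely the contractions of $\qq_1,\ldots,\qq_r$ and $B_\pp$ is semi-local. Completing at the Jacobson radical $\pp B_\pp$ and applying the Chinese Remainder Theorem to each quotient $B_\pp/\pp^n B_\pp$ yields the product decomposition
\[
B \otimes_A \widehat{A}_\pp \;\cong\; \prod_{i=1}^{r}\widehat{B}_{\qq_i}.
\]
Tensoring with the fraction field of $\widehat{A}_\pp$ and using the standard compatibility of trace with base change for the separable extension $\mathrm{Frac}(B)/\mathrm{Frac}(A)$, the map $\mathrm{Tr}_{B/A}\otimes 1$ decomposes as the sum of the component traces $\mathrm{Tr}_{\widehat{B}_{\qq_i}/\widehat{A}_\pp}\circ \mathrm{loc}_{\qq_i}$, which is the first identity.

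For part~(ii), I would fix $i$ and compute the trace of multiplication by $f$ on $\widehat{B}_{\qq_i}$ modulo $\pp\widehat{A}_\pp$ in an explicit basis. Let $e=e_i$ and $m=[k(\qq_i):k(\pp)]$, pick a uniformiser $\varpi$ of $\widehat{B}_{\qq_i}$, and choose $b_1,\ldots,b_m\in\widehat{B}_{\qq_i}$ whose reductions form a $k(\pp)$-basis of $k(\qq_i)$. By Nakayama's lemma the set $\{\varpi^j b_k : 0\le j<e,\ 1\le k\le m\}$ is an $\widehat{A}_\pp$-basis of $\widehat{B}_{\qq_i}$. Expanding $f = c_0 + c_1\varpi + \cdots$ with each $c_s$ in the $\widehat{A}_\pp$-span of the $b_k$, the terms with $s\ge 1$ contribute coefficients divisible by $\pp$ to the matrix of multiplication by $f$ in this basis. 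Hence modulo $\pp$ the matrix is block diagonal with $e$ identical blocks, each representing multiplication by $\bar f \in k(\qq_i)$ on $k(\qq_i)$ in the basis $b_1,\ldots,b_m$. Taking the trace yields $\mathrm{Tr}_{\widehat{B}_{\qq_i}/\widehat{A}_\pp}(f) \equiv e_i\cdot \mathrm{Tr}_{k(\qq_i)/k(\pp)}(f(\qq_i))\pmod{\pp}$, and summing over $i$ using part~(i) gives the second formula.

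The main obstacle is establishing the product decomposition cleanly in the Krull-domain setting, since $B/A$ need not be module-finite nor $A,B$ Noetherian in full generality. The fix is to pass to $B_\pp$: once there are only finitely many primes $\qq_i$ above $\pp$ and each completion $\widehat{B}_{\qq_i}$ is a finite extension of the complete DVR $\widehat{A}_\pp$, everything reduces to standard facts about finite extensions of complete discretely valued fields (see e.g.\ Serre, \emph{Local Fields}, Ch.~II). Separability of $\mathrm{Frac}(B)/\mathrm{Frac}(A)$, automatic in the geometric applications to curves over a number field, is tacitly used throughout.
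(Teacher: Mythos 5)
Your proposal follows essentially the same route as the paper: localise at $A\setminus\pp$, complete, apply the Chinese remainder decomposition $\widehat B\cong\prod_i\widehat B_{\qq_i}$, and then read off (i) from the block structure of the trace matrix and (ii) from an explicit basis for $\widehat B_{\qq_i}$ over $\widehat A_\pp$. The only genuine difference is cosmetic: the paper computes the residue trace in two stages, first down to $k(\qq_i)$ via the filtration $\widehat B_{\qq_i}/\pp\simeq\bigoplus_{j=0}^{e_i-1}\qq_i^j/\qq_i^{j+1}$ (each graded piece being a one--dimensional $k(\qq_i)$--line), and only then applies $\mathrm{Tr}_{k(\qq_i)/k(\pp)}$; you compute directly to $k(\pp)$ in the basis $\{\varpi^j b_k\}$.

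One small slip in your part~(ii): the terms with $s\ge 1$ in the expansion of $f$ do \emph{not} contribute $\pp$-divisible entries throughout the multiplication matrix, so it is \emph{not} block diagonal mod $\pp$. For instance $c_1\varpi$ sends $\varpi^j b_k$ to $\varpi^{j+1}b_{k'}$-terms with unit coefficients when $j+1<e$. What is true is that the matrix is block \emph{triangular} with respect to the $\varpi$--adic filtration (entries dropping the power of $\varpi$ pick up a factor of the uniformiser of $\widehat A_\pp$, via $\varpi^e\in\pp\widehat B_{\qq_i}^\times$), and its $e$ diagonal blocks are each multiplication by $\bar f$ on $k(\qq_i)$. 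Since only the diagonal blocks matter for the trace, your conclusion stands, but the justification should say ``triangular'' rather than ``diagonal''. As you note, both your argument and the paper's tacitly use that $B$ is module--finite over $A$ after localisation; this is automatic in the geometric setting (separable extensions of function fields of curves over a perfect field), though it is not stated as a hypothesis of the lemma.
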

 \begin{proof}
Upon localising $A$ and $B$ at the multiplicative subset $A\setminus\pp$, we may assume that $A$ is a DVR with maximal ideal $\pp$, and $B$ is a Dedekind domain with maximal ideals $\qq_1,\ldots,\qq_r$.

Now consider the completion of $B$ with respect to $\pp B$ and apply the Chinese Remainder Theorem:
 \begin{align}
 \label{CRT}
 \widehat{B}\colonequals \lim_{\substack{\longleftarrow\\ n}}B/\pp^n B=\prod_i \widehat{B}_{\qq_i}.
 \end{align}
  Consider
  $b\in \widehat{B}\otimes_{\widehat{A}}\mathrm{Frac}(\widehat{A})$.  We define the trace $\mathrm{Tr}_{\widehat{B}/\widehat{A}}(b)$ by taking the trace of the $\mathrm{Frac}(\widehat{A})$-linear map given by multiplication-by-$b$ on $\widehat{B}\otimes_{\widehat{A}}\mathrm{Frac}(\widehat{A})$. Here $B$ is finite and free as an $A$-module, and a basis for $B/A$ also determines a basis for $\widehat{B}/\widehat{A}$. 
 In particular, for $b\in \mathrm{Frac}(B)$, we have $\mathrm{Tr}_{B/A}(b)=\mathrm{Tr}_{\widehat{B}/\widehat{A}}(b)$, which is the trace of a block matrix on $\prod_i\mathrm{Frac}(\widehat{B}_{\qq_i})$.
 This gives the first equality.
 
 For the second equation, we begin by reducing the first modulo $\pp$.
 It then suffices to show  the equality $e_i\mathrm{Tr}_{k(\qq_i)/k(\pp)}(f(\qq_i))=\mathrm{Tr}_{\widehat{B}_{\qq_i}/\widehat{A}_{\pp}}(f)(\pp)$
 for each $i$.
To compute $\mathrm{Tr}_{\widehat{B}_{\qq_i}/\widehat{A}_{\pp}}(f)(\pp)$, we may first determine the image of $f$ in the $k(\pp)$-vector space $\widehat{B}_{\qq_i}/\pp$, and then take the trace down to $k(\pp)$.

We note that \[\widehat{B}_{\qq_i}/\pp=\widehat{B}_{\qq_i}/\qq_i^{e_i} \simeq  \widehat{B}_{\qq_i}/\qq_i\times \qq_i/\qq_i^2\times\cdots \times \qq_i^{e_i-1}/\qq_i^{e_i}\] as $k(\pp)$-vector spaces.
The right-hand side is a $k(\qq_i)$-vector space, and to compute the trace to $k(\pp)$ we may first compute the trace to $k(\qq_i)$. 
Finally, we may consider a uniformiser $t\in \widehat{B}_{\qq_i}$. If $f=\sum_{n=0}^{\infty}a_nt^n\in \widehat{B}_{\qq_i}$ then the multiplication-by-$f$ map  on $\widehat{B}_{\qq_i}/\pp$ has, by the above,  $e_i$ diagonal entries equal to $a_0$, so the trace becomes $e_ia_0=e_if(\qq_i)$. 
 \end{proof}
 When $M$ is an $R$-module and $\rr$ is a prime ideal in $R$, denote by $M_{\rr}$ the localisation of $M$ at $\rr$, and by $\widehat{M}_{\rr}$ its completion. Now suppose that $A$ and $B$ are Dedekind domains and that $K$ is a field such that the $A$ and $B$ are $K$-algebras and $A\to B$ is an embedding of $K$-algebras. 
 
 We obtain an embedding $\Omega_{\mathrm{Frac}(A)/K}\to \Omega_{\mathrm{Frac}(B)/K}$ of the spaces of K\"ahler differentials. Now given $s\in A$ (but $s\notin K)$, $\mathrm{d}s$ is a $\mathrm{Frac}(B)$-basis for $\Omega_{\mathrm{Frac}(B)/K}$, and we obtain a trace map
 \[
 \mathrm{Tr}_{B/A}\colon\; \Omega_{\mathrm{Frac}(B)/K}\to \Omega_{\mathrm{Frac}(A)/K},\;\; f\mathrm{d}s\mapsto \mathrm{Tr}_{B/A}(f)\mathrm{d}s
 \]
 which is independent of the choice of $s$. We also denote by $\mathrm{loc}_{\rr}$ the localisation map on K\"ahler differentials with respect to a prime ideal $\rr$.
 \begin{corollary}
 \label{tracecor}
The equation 
\[
 \mathrm{loc}_{\pp}\circ \mathrm{Tr}_{B/A} = \sum_i  \mathrm{Tr}_{\widehat{B}_{\qq_i}/\widehat{A}_{\pp}}\circ \mathrm{loc}_{\qq_i}
\]
also holds true on $\Omega_{\mathrm{Frac}(B)/K}$.
 \end{corollary}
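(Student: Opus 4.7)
The plan is to reduce the statement on Kähler differentials to the function-level identity already established in Lemma \ref{tracelemma}. Since $\mathrm{Frac}(A)/K$ is a finitely generated field extension (here the function field of a curve), I would fix an element $s \in A \setminus K$ such that $\dd s$ generates $\Omega_{\mathrm{Frac}(A)/K}$ as a rank-one module over $\mathrm{Frac}(A)$. After base change to $\mathrm{Frac}(B)$ along the given embedding $A \hookrightarrow B$, the same $\dd s$ generates $\Omega_{\mathrm{Frac}(B)/K}$ as a rank-one module over $\mathrm{Frac}(B)$, so every differential $\om \in \Omega_{\mathrm{Frac}(B)/K}$ can be written uniquely as $\om = f\,\dd s$ with $f \in \mathrm{Frac}(B)$.

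Next I would unwind all of the maps appearing in the statement with respect to this basis. The global trace $\mathrm{Tr}_{B/A}$ on differentials is, by its definition (and the independence from the choice of $s$ already observed), given by $f\,\dd s \mapsto \mathrm{Tr}_{B/A}(f)\,\dd s$. The localisation maps act trivially on $\dd s$: we have $\mathrm{loc}_{\qq_i}(f\,\dd s) = \mathrm{loc}_{\qq_i}(f)\,\dd s$ in the completed module $\widehat{\Omega}_{\mathrm{Frac}(B)/K,\qq_i}$, and $\mathrm{loc}_{\pp}(f\,\dd s) = \mathrm{loc}_{\pp}(f)\,\dd s$. Finally, the local trace maps $\mathrm{Tr}_{\widehat{B}_{\qq_i}/\widehat{A}_{\pp}}$ on completed Kähler differentials are defined by exactly the same formula $g\,\dd s \mapsto \mathrm{Tr}_{\widehat{B}_{\qq_i}/\widehat{A}_{\pp}}(g)\,\dd s$, which is independent of the chosen basis element for the same reason as in the global case.

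With these identifications in place, both sides of the claimed identity, applied to $f\,\dd s$, produce a scalar multiple of $\dd s$; equality of the $\dd s$-coefficients is precisely the content of Lemma \ref{tracelemma} applied to $f \in \mathrm{Frac}(B)$. The main (and essentially only) obstacle is bookkeeping: one has to check carefully that $\dd s$ remains a basis for each of the completed differential modules $\widehat{\Omega}_{\mathrm{Frac}(B)/K,\qq_i}$ and $\widehat{\Omega}_{\mathrm{Frac}(A)/K,\pp}$ so that the definition of the local trace on differentials is unambiguous, and that the localisation and completion operations commute with the $\mathrm{Frac}(B)$-linear structure in the way required. Once this compatibility is verified, the corollary follows immediately from Lemma \ref{tracelemma} without any further computation.
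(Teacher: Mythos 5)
Your proposal is correct and is precisely the intended argument: the paper states this corollary without an explicit proof because it follows immediately from Lemma \ref{tracelemma} once one fixes $\dd s$ as a common basis and notes that the trace, localisation, and local trace maps all act on the $\dd s$-coefficient exactly as you describe. The compatibility you flag---that $\dd s$ remains a basis for the differential modules over the local fraction fields $\mathrm{Frac}(\widehat{B}_{\qq_i})$ and $\mathrm{Frac}(\widehat{A}_{\pp})$---holds because we are in characteristic $0$, so $\dd s/\dd t$ is a nonzero element of each local fraction field for any choice of local uniformiser $t$.
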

 We note that traces of $n$-th roots are easily computed.
 \begin{lemma}
 \label{zerotracelemma}
Suppose that $F(\al)/F$ is a field extension of degree $n$ defined by the minimal polynomial $\al^n=a$ for some $a\in F$. Then
\[
\mathrm{Tr}_{F(\al)/F}(\al^i)=0 \text{ unless } n\mid i.
\]
\end{lemma}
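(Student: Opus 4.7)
The plan is to prove this by a direct matrix computation against the natural power basis. Since $[F(\alpha):F] = n$, the elements $1, \alpha, \ldots, \alpha^{n-1}$ form an $F$-basis of $F(\alpha)$, and $\mathrm{Tr}_{F(\alpha)/F}(\alpha^i)$ is by definition the trace of the $F$-linear endomorphism ``multiplication by $\alpha^i$'' on this basis.

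First I would reduce to the range $0 \leq i < n$. Writing $i = qn + r$ with $0 \leq r < n$, the relation $\alpha^n = a \in F$ gives $\alpha^i = a^q \alpha^r$, so by $F$-linearity of the trace,
\[
\mathrm{Tr}_{F(\alpha)/F}(\alpha^i) = a^q \, \mathrm{Tr}_{F(\alpha)/F}(\alpha^r).
\]
The divisibility condition $n \mid i$ is equivalent to $r = 0$, so it suffices to show that $\mathrm{Tr}_{F(\alpha)/F}(\alpha^r) = 0$ for $1 \leq r \leq n-1$.

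For such $r$, I would write down the matrix of multiplication-by-$\alpha^r$ explicitly with respect to the basis $1, \alpha, \ldots, \alpha^{n-1}$. The basis vector $\alpha^j$ is sent to $\alpha^{r+j}$, which equals $\alpha^{r+j}$ if $r+j < n$ and $a \cdot \alpha^{r+j-n}$ if $r+j \geq n$. In either case the image is a scalar multiple of $\alpha^k$ for some $k \ne j$: indeed, since $1 \leq r \leq n - 1$ and $0 \leq j \leq n-1$, both $r + j \ne j$ and $r + j - n \ne j$. Hence every diagonal entry of the matrix is zero, and the trace vanishes.

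There is essentially no obstacle in this argument; the only thing to verify carefully is that the range conditions $1 \leq r \leq n-1$ genuinely rule out a diagonal contribution, which they do. An alternative route would be to embed $F$ into a splitting field of $x^n - a$ and compute $\mathrm{Tr}(\alpha^i) = \alpha^i \sum_{k=0}^{n-1} \zeta^{ki}$ for a primitive $n$-th root of unity $\zeta$, invoking the standard geometric sum identity; however, this requires more care with separability and characteristic hypotheses, so I would prefer the self-contained basis argument, which makes no assumption on $\mathrm{char}(F)$.
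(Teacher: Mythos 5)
Your proof is correct and is essentially the argument the paper has in mind: the paper's proof consists only of the remark that the claim ``follows directly from the shape of the minimal polynomial,'' and your matrix computation in the power basis $1,\alpha,\ldots,\alpha^{n-1}$ is exactly the standard way to spell that out. The reduction to $0\le r<n$ and the observation that multiplication by $\alpha^r$ has zero diagonal for $1\le r\le n-1$ are both accurate and characteristic-free, so nothing is missing.
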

\begin{proof}
This follows directly from the shape of the minimal polynomial.
\end{proof}

We now consider a map $\rho\colon X\to C$ of curves over a field $K$ and a $K$-rational point $R\in C(K)$. As divisors, we write $\rho^*R = \sum_{Q\mapsto R}e_{Q/R}Q$ as a sum of points $Q$ on $X$. Assume that we have chosen $K$ such that $Q\in X(K)$ for each $Q$ mapping to $R$. Denote by $v_Q$ the valuation of the discrete valuation ring $\widehat{\O}_{X,Q}$.

From now on, suppose that $K$ is a finite extension of $\Q_p$ for a prime $p$ of good reduction for $X$ and $C$, with ring of integers $\O_K$ and prime ideal $\pp$. Then $\rho$ extends to an $\O_K$-morphism  $\mathcal{X}\to \mathcal{C}$ between the minimal proper regular models of $X$ and $C$.

Suppose no two distinct points mapping to $R$ have equal reduction mod $\pp$ and  $p\nmid e_{Q/R}$ for all points $Q\mapsto R$. 

\begin{lemma}
\label{uniformiserlemma}
Let $t\in \widehat{\O}_{C,R}$ be a well-behaved local coordinate, and consider $Q\in X$ mapping to $R$.
Then, after base changing to an unramified extension of $K$ of degree at most $e_{Q/R}$, there exists a well-behaved local coordinate $s_Q\in \widehat{\O}_{X,Q}$ such that $s_Q^{e_{Q/R}}=t$.
Moreover, $\rho^*\colon\widehat{\O}_{C,R}\to \widehat{\O}_{X,Q}$ induces an embedding of fraction fields of degree $e_{Q/R}$ defined by the equation $x^{e_{Q/R}}-t=0$.
\end{lemma}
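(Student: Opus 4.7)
The plan is to construct $s_Q$ by taking an $e$-th root using Hensel's lemma, after an unramified base change that enlarges the residue field enough to contain a needed root.

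First I would pick any well-behaved uniformiser $u \in \widehat{\O}_{X,Q}$, which exists by the assumption of good reduction. Writing $e = e_{Q/R}$, the ramification assumption gives $v_Q(\rho^* t) = e$, so
\[
\rho^* t = u^e v \quad \text{for some unit } v \in \widehat{\O}_{X,Q}^\times.
\]
The idea is to find a unit $w$ satisfying $w^e = v$; then $s_Q := u w$ is automatically a uniformiser at $Q$ (being a unit times a uniformiser), it reduces to $\widetilde{u}\widetilde{w}$, which is a uniformiser at $\widetilde{Q}$ since $\widetilde{w}$ is a unit in the residue field, and it satisfies $s_Q^e = u^e v = \rho^* t$, i.e., $s_Q^e = t$ under the embedding $\rho^*$.

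Next I would produce $w$ via Hensel's lemma applied to $f(X) = X^e - v \in \widehat{\O}_{X,Q}[X]$. The derivative is $f'(X) = e X^{e-1}$. Since $p \nmid e$ by hypothesis, any root $\bar{w}_0$ of $f$ in the residue field $k(Q)$ with $\bar{w}_0 \neq 0$ automatically satisfies $f'(\bar{w}_0) \in k(Q)^\times$, so Hensel's lemma lifts it uniquely to a unit $w \in \widehat{\O}_{X,Q}^\times$ with $w^e = v$. The only obstruction is the existence of a root of $\overline{f}(X) = X^e - \widetilde{v}$ in the residue field $k(Q)$. Since $\widetilde{v}$ is a nonzero element and $p \nmid e$, the polynomial $X^e - \widetilde{v}$ is separable over $k(Q)$ and splits in an extension of $k(Q)$ of degree dividing $e$, hence of degree at most $e$. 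This residue field extension lifts to a unique unramified extension $K'/K$ of the same degree; after the base change to $K'$, the point $Q$ still has residue field containing the desired root, and the argument above produces $w$ and hence $s_Q$ in $\widehat{\O}_{X_{K'},Q_{K'}}$.

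Finally, for the statement about fraction fields: after the base change, $\rho^*$ induces $K'\llbracket t \rrbracket \hookrightarrow K'\llbracket s_Q \rrbracket$ sending $t \mapsto s_Q^e$, so the extension of fraction fields $K'((s_Q))/K'((t))$ has degree $e$ (the ramification index), and $s_Q$ is by construction a root of the polynomial $X^e - t$; since this polynomial has degree equal to the extension degree, it is the minimal polynomial of $s_Q$ over $K'((t))$, giving the claimed equation $x^{e_{Q/R}} - t = 0$.

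The main obstacle, and the only place subtlety intervenes, is the passage to the unramified extension $K'/K$: one must check that the residue field enlargement needed to contain an $e$-th root of $\widetilde{v}$ has degree at most $e$ (which follows from the fact that $X^e - \widetilde{v}$ has degree $e$) and that the resulting extension is indeed unramified (which follows because $p \nmid e$ ensures tame behavior and the lift of a separable residue field extension is unramified). Everything else is a direct application of Hensel's lemma and bookkeeping with uniformisers.
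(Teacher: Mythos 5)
Your proposal follows the paper's proof almost exactly: express $\rho^*t = u^e v$ with $u$ a well-behaved uniformiser at $Q$ and $v$ a unit, pass to an unramified extension so the residue field contains an $e$th root of $\widetilde{v}$, then lift by Hensel's lemma using $p\nmid e$, and set $s_Q = u\cdot v^{1/e}$. The one place you take for granted something the paper proves is the assertion that ``$\widetilde v$ is a nonzero element'' of $k(Q)$. A priori $v$ is only a unit of $\widehat{\O}_{X,Q} = K\llbracket u\rrbracket$, meaning its constant term is nonzero in $K$; for the reduction argument you need that constant term to lie in $\O_K^\times$. The paper supplies the justification: because $t$ and $u$ are well-behaved (hence uniformisers on the special fibre as well) and the ramification index of $\widetilde\rho$ at $\widetilde Q/\widetilde R$ equals $e$ (which follows from good reduction plus the standing hypothesis that distinct points over $R$ have distinct reductions), the reduction of $\rho^*t$ has valuation exactly $e$ at $\widetilde Q$, which is precisely $\widetilde v \neq 0$. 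With that spelled out, your argument is complete; the paper's two-stage use of Hensel (first over $K'$ for $X^e - b_0$, then over $\widehat{\O}_{X_{K'},Q}$ for the full unit) versus your single application is only a cosmetic difference.
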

\begin{proof}
Consider $Q\in \rho^{-1}(\{R\})$. As $Q$ is a $K$-rational point, its residue field is $k(Q)=K$. Let $\pi\in \widehat{\O}_{X,Q}$ and $t\in \widehat{\O}_{C,R}$ be two well-behaved uniformisers, and write $e=e_{Q/R}$. We can write $\rho^*(t)=b_0\pi^e+b_1\pi^{e+1}+\cdots$, with each $b_i\in \O_K$. Define $u=b_0+b_1\pi+b_2\pi^2+\cdots\in \widehat{\O}_{X,Q}^{\times}$, so that $\rho^*(t)=u\pi^{e}$. Then the mod $\pp$ reduction of $u$ is $\widetilde{u}=\sum_{n=0}^{\infty} \widetilde{b}_n\widetilde{\pi}^n$ since $\pi$ is well-behaved. Since also $t$ is well-behaved and $\widetilde{\rho}$ still has ramification degree $e$ at $\widetilde{Q}/\widetilde{R}$ by assumption, we must have $\widetilde{b}_0\neq 0$ (c.f Section \ref{prelimsection}).  Let $k'/k$ be an extension of the residue field of $K$ containing a root of $X^e-\widetilde{b}_0$. Then $k'$ corresponds to an unramified extension $K'/K$ and by Hensel's Lemma (applicable because $b_0\in \O_K^{\times}$ and $p\nmid e$), $K'$ contains a root of $X^e-b_0$.  Then also $\widehat{\O}_{X_{K'},Q}$ contains an $e$th root of $u$ by Hensel's lemma, and we simply define $s_Q=u^{1/e}\pi$. The map between fraction fields now has degree $e$ because $\rho^*$ is an isomorphism on residue fields. 
\end{proof}
Using the lemma, we extend $K$ and define $t$ and $s_Q$ (for each $Q\mapsto R$) to be well-behaved uniformisers satisfying $s_Q^{e_{Q/R}}=t$. We consider the Dedekind domain 
\[
\O_{X,\rho^*R}\colonequals \{f\in K(X) \mid v_Q(f) \geq 0 \text{ for all } Q\mapsto R\},
\]
which is an integral extension of the DVR $\O_{C,R}$. This Dedekind domain has localisations $\O_{X,Q}$ at all places $Q$ mapping to $R$.  
Denote by $\mathrm{Tr}_{Q/R}$ the trace map from $\mathrm{Frac}(\widehat{\O}_{X,Q})$ to $\mathrm{Frac}(\widehat{\O}_{C,R})$. Now suppose that $\om$ is a global meromorphic differential on $X/K$. We can interpret $\om$ as a K\"ahler differential in $\Omega_{K(X)/K}$. From the integral extension $\rho^*\colon \O_{C,R}\to \O_{X,\rho^*R}$ of Dedekind domains, we thus obtain a trace map \[
\mathrm{Tr}\colon \; \Omega_{K(X)/K}\to \Omega_{K(C)/K},
\]
which for  $\om\in H^0(X,\Omega)$ equals the trace map defined in Section \ref{prelimsection}.
\begin{proposition}\label{coefficients conditions}
 Write  $\mathrm{loc}_Q(\om)=\sum_{i=0}^{\infty}a_i(Q)s_Q^i\dd s_Q$ for the expansion of $\om$ in $\Omega_{\mathrm{Frac}(\widehat{\O}_{X,Q})/K}$. Then $\mathrm{loc}_R(\mathrm{Tr}(\om))=\sum_{Q\mapsto R}\sum_{j\geq 1}a_{je_{Q/R}-1}(Q)t^{j-1}\dd t$. 
 
 In particular, if $\mathrm{Tr}(\om)=0$, then for each $j\geq 1$ we have
\[
\sum_{Q\mapsto R}a_{j\cdot e_{Q/R}-1}(Q)=0.
\]
\end{proposition}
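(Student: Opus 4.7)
The plan is to compute the local expansion of $\mathrm{Tr}(\om)$ at $R$ one point $Q \mapsto R$ at a time, and then sum the local contributions via Corollary \ref{tracecor}. First, by Lemma \ref{uniformiserlemma} I may, after an unramified extension of $K$ (which is harmless since both sides of the claimed identity are intrinsic and commute with such extensions), choose a well-behaved uniformiser $s_Q$ with $s_Q^{e_{Q/R}} = t$. Writing $e = e_{Q/R}$, this gives $\dd t = e\, s_Q^{e-1}\, \dd s_Q$, so converting the given expansion into the basis $\dd t$ yields
\[
\mathrm{loc}_Q(\om) = \sum_{i \geq 0} a_i(Q)\, s_Q^{i}\, \dd s_Q = \sum_{i \geq 0} \frac{a_i(Q)}{e}\, s_Q^{i-e+1}\, \dd t.
\]

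Next I would apply Lemma \ref{zerotracelemma} to the degree-$e$ extension $\mathrm{Frac}(\widehat{\O}_{X,Q}) / \mathrm{Frac}(\widehat{\O}_{C,R})$, which by Lemma \ref{uniformiserlemma} is defined by the minimal polynomial $x^{e} - t$. This gives $\mathrm{Tr}_{Q/R}(s_Q^{k}) = 0$ unless $e \mid k$, while $\mathrm{Tr}_{Q/R}(s_Q^{me}) = \mathrm{Tr}_{Q/R}(t^m) = e\, t^{m}$. Applying this termwise (pulling $\dd t$ out of the trace), only indices with $i - e + 1 = (j-1)e$ survive, i.e.\ $i = je - 1$ for some $j \geq 1$, and each contributes $\tfrac{a_{je-1}(Q)}{e} \cdot e \cdot t^{j-1}\, \dd t$. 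Hence
\[
\mathrm{Tr}_{\widehat{\O}_{X,Q}/\widehat{\O}_{C,R}}(\mathrm{loc}_Q(\om)) = \sum_{j \geq 1} a_{je-1}(Q)\, t^{j-1}\, \dd t.
\]

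Summing over all $Q \mapsto R$ and applying Corollary \ref{tracecor} produces exactly the asserted expression for $\mathrm{loc}_R(\mathrm{Tr}(\om))$. The ``in particular'' statement is then immediate: if $\mathrm{Tr}(\om) = 0$, then its expansion in the uniformiser $t$ at $R$ vanishes identically, so each coefficient $\sum_{Q \mapsto R} a_{j\,e_{Q/R} - 1}(Q)$ of $t^{j-1}\, \dd t$ is zero for $j \geq 1$.

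The only real subtlety is the legitimacy of the termwise trace: one must know that the trace of meromorphic differentials in $\Omega_{\mathrm{Frac}(\widehat{\O}_{X,Q})/K}$ respects the convergent expansion in $s_Q$. This is essentially built into the free module structure of $\widehat{\O}_{X,Q}$ over $\widehat{\O}_{C,R}$ exploited in the proof of Lemma \ref{tracelemma}, so no genuine analytic issue arises and the computation is a direct algebraic manipulation.
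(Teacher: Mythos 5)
Your proof is correct and follows essentially the same route as the paper: both reduce to local traces via Corollary~\ref{tracecor}, then invoke Lemma~\ref{zerotracelemma} on powers of $s_Q$. The one minor difference is that you convert to the $\dd t$ basis before tracing (producing $s_Q^{i-e+1}$, possibly with negative exponent, all of which vanish under the trace since they are never nonnegative multiples of $e$), whereas the paper traces first and uses the identity $\mathrm{Tr}_{Q/R}(\dd(s_Q^i)) = \dd(\mathrm{Tr}_{Q/R}(s_Q^i))$ to avoid negative exponents altogether.
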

\begin{remark}
Note that this equality determines a linear relation between the columns of the matrix $\widetilde{\mathcal{A}}$ defined in the statement of Theorem \ref{bigtheorem}. For example, when $\rho$ has degree 2 and $Q\in X$ ramifies, the equation says $a_1(Q)=0$, and $\widetilde{\mathcal{A}}$ has a vanishing column, c.f.\ Example \ref{firstex}.
\end{remark}
\begin{proof}
We note that $K(X)=\mathrm{Frac}(\O_{X,\rho^*R})$ and $K(C)=\mathrm{Frac}(\O_{C,R})$, and recall that the extension $\O_{X,\rho^*R}/\O_{C,R}$ is integral. We also obtain local trace maps $\mathrm{Tr}_{Q/R}\colon\;\Omega_{\mathrm{Frac}(\widehat{\O}_{X,Q})/K}\to \Omega_{\mathrm{Frac}(\widehat{\O}_{C,R})/K}$. Applying Corollary \ref{tracecor} to $\O_{X,\rho^*R}/\O_{C,R}$, we obtain
 \[
\mathrm{loc}_{R}(\mathrm{Tr}(\om))=\sum_{Q\mapsto R} \mathrm{Tr}_{Q/R}(\mathrm{loc}_Q(\om))=\sum_{Q\mapsto R}\mathrm{Tr}_{Q/R}\left(\sum_{i=1}^{\infty} a_{i-1}(Q)s_Q^{i-1}\dd s_Q\right).
\]
Next, we recall that by Lemma \ref{zerotracelemma} we have $\mathrm{Tr}_{Q/R}(s_Q^i)=0$, unless $i$ is a multiple of $e_{Q/R}$, in which case $\mathrm{Tr}_{Q/R}(s_Q^{je_{Q/R}})=e_{Q/R}t^j$. Now
$
i\mathrm{Tr}_{Q/R}(s_Q^{i-1}\dd s_Q)=\dd \mathrm{Tr}_{Q/R}(s_Q^i) ,
$
from which we deduce that $\mathrm{loc}_R(\mathrm{Tr}(\om))=\sum_{Q\mapsto R}\sum_{j=1}^{\infty}a_{je_{Q/R}-1}(Q)t^{j-1}\dd t$. 
Finally, we recall from Section \ref{prelimsection} that $\Omega_{\widehat{\O}_{C,R}/K}= K\llbracket t\rrbracket\dd t$, so the equality of power series yields a coefficient-wise equality. 
\end{proof}
 Finally, we show that traces of uniformisers can be computed as expected. 
\begin{lemma}
\label{localtracelemma}
Consider given $Q\in X(K)$ with $\rho(Q)=R$, again such that no other point mapping to $R$ has the same reduction as $Q$. Suppose $P\in D(R)$ and write, as divisors, $ \rho^*P\cap D(Q) = \sum_{i=1}^e P_i$ (with possible repetition), where $e$ is the common ramification index of $Q/R$ and $\widetilde{Q}/\widetilde{R}$. We base change $X$ and $C$ from $K$ to $K(P_1,\ldots,P_e)$, the extension of $K$ containing all coordinates of $P_1,\ldots,P_e$. Then for a well-behaved uniformiser $s$ at $Q$ we have
\[
\mathrm{Tr}_{Q/R}(s)(P)=\sum_{i=1}^e s(P_i).
\]
Consequently, if $f(X)=1-b_1X+b_2X^2-\cdots+(-1)^eb_eX^e$ is the reverse minimal polynomial of $s\in \widehat{\O}_{X,Q}$ over $\widehat{\O}_{C,R}$, then $b_i(P)$ is equal to the $i$th symmetric polynomial in $s(P_1),\ldots,s(P_e)$. 
\end{lemma}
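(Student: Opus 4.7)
The plan is to realise $\mathrm{Tr}_{Q/R}(s)$ as (minus) the coefficient of $Y^{e-1}$ in the characteristic polynomial of multiplication by $s$ on $\widehat{\O}_{X,Q}$ as a rank-$e$ module over $\widehat{\O}_{C,R}$, and then to identify the specialisation of that polynomial at $t=t(P)$ with $\prod_{i=1}^e(Y-s(P_i))$.

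First, set $K' := K(P_1,\ldots,P_e)$. After base-changing and using $Q\in X(K')$, the identifications of Section \ref{prelimsection} give $\widehat{\O}_{X,Q}=K'\llbracket s\rrbracket$ and $\widehat{\O}_{C,R}=K'\llbracket t\rrbracket$. The extension is free of rank $e=e_{Q/R}$ with basis $1,s,\ldots,s^{e-1}$ (both rings have residue field $K'$, and $s$ is a uniformiser). Consequently, multiplication by $s$ on $\widehat{\O}_{X,Q}$ has a characteristic polynomial
\[
f(Y) \;=\; Y^e - c_1 Y^{e-1} + c_2 Y^{e-2} - \cdots + (-1)^e c_e \;\in\; \widehat{\O}_{C,R}[Y],
\]
with $c_1=\mathrm{Tr}_{Q/R}(s)$. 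Since $s$ generates the extension, $f$ is also the minimal polynomial of $s$, so a direct comparison with the reverse minimal polynomial in the lemma statement yields $b_i=c_i$ for all $i$.

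Next, specialise at $P$. Define $R_P := \widehat{\O}_{X,Q}\otimes_{\widehat{\O}_{C,R}} K'$, where the tensor is taken along $\mathrm{ev}_P\colon t\mapsto t(P)$. The same basis shows $R_P$ is an $e$-dimensional $K'$-algebra, and the characteristic polynomial of multiplication by $s$ on $R_P$ is $\bar f(Y)$, the polynomial obtained from $f$ by replacing every $c_j$ by $c_j(P)$. On the other hand, Weierstrass preparation applied to $\rho^* t - t(P)\in\O_{K'}\llbracket s\rrbracket$ — whose reduction modulo a uniformiser of $\O_{K'}$ has $s$-order exactly $e$, because $\widetilde Q/\widetilde R$ has the same ramification index $e$ — yields a factorisation $\rho^* t - t(P) = u(s)\,g(s)$ with $u\in\O_{K'}\llbracket s\rrbracket^\times$ and $g$ a monic polynomial of degree $e$ whose roots, by the disjointness hypothesis, are exactly $s(P_1),\ldots,s(P_e)$ (with multiplicity). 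This gives $R_P\cong K'[Y]/(g(Y))$ as $K'$-algebras via $s\mapsto Y$, so the characteristic polynomial of $s$ on $R_P$ also equals $g(Y) = \prod_{i=1}^e(Y-s(P_i))$.

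Comparing the two computations forces $\bar f(Y)=\prod_{i=1}^e(Y-s(P_i))$. Expanding in elementary symmetric polynomials gives $c_i(P) = e_i(s(P_1),\ldots,s(P_e))$ for every $i$; the case $i=1$ is the asserted trace formula, and the statement about the $b_i$ follows from $b_i=c_i$. The main obstacle is step three, namely the Weierstrass-preparation identification of $g$: one must simultaneously use the disjointness assumption (guaranteeing no root of $g$ is lost to a residue disc other than $D(Q)$) and the agreement of generic and special-fibre ramification (guaranteeing $g$ has degree exactly $e$, so no spurious roots are introduced). Everything else is linear algebra once this identification is in place.
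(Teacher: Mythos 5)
Your skeleton is a genuinely different route from the paper's, and it can be made to work, but as written it contains a real error in the choice of rings, which you then silently correct halfway through. You define $R_P := \widehat\O_{X,Q}\otimes_{\widehat\O_{C,R}} K'$ along ``$\mathrm{ev}_P\colon t\mapsto t(P)$'', with $\widehat\O_{C,R}=K'\llbracket t\rrbracket$. No such map exists: $K'\llbracket t\rrbracket$ is local with maximal ideal $(t)$, so the only $K'$-algebra homomorphism to $K'$ kills $t$; since $P\in D(R)$ is not $R$, we have $t(P)\neq 0$, so $\mathrm{ev}_P$ is not a ring map on $K'\llbracket t\rrbracket$ (nor would the series $c_i\in K'\llbracket t\rrbracket$ converge at $t(P)$ in general). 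The needed evaluation does exist on the integral subring $\widehat\O_{\mathcal{C},\widetilde R}\cong\O_{K'}\llbracket t\rrbracket$, a two-dimensional local ring in which $(t - t(P))$ is a genuine height-one prime, and this integral structure is precisely what you quietly invoke later when you apply Weierstrass preparation to $\rho^*t - t(P)\in\O_{K'}\llbracket s\rrbracket$. The whole argument should therefore be run for $\O_{K'}\llbracket t\rrbracket\to\O_{K'}\llbracket s\rrbracket$, finite and free of rank $e$ with basis $1,s,\ldots,s^{e-1}$ (free by miracle flatness for regular local rings of the same dimension, basis lifted by Nakayama from the special fibre, where the rank is $e$ precisely because the reduced ramification index equals $e$). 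Then the characteristic polynomial lives in $\O_{K'}\llbracket t\rrbracket[Y]$, evaluation at $t(P)$ is legitimate, $R_P := \O_{K'}\llbracket s\rrbracket\otimes_{\O_{K'}\llbracket t\rrbracket}\O_{K'}\cong \O_{K'}[Y]/(g(Y))$ with $g(Y)=\prod_i(Y-s(P_i))$, and your comparison goes through.

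For comparison with the paper: the authors' proof applies their general Krull-domain trace lemma (Lemma \ref{tracelemma}) to the same integral extension $\widehat\O_{\mathcal{C},\widetilde R}\to\widehat\O_{\mathcal{X},\widetilde Q}$, once localising at the generic-fibre prime $\mm_R$ to identify the global trace with $\mathrm{Tr}_{Q/R}$, and once at the horizontal prime $\mm_P$ to expand the evaluation as a sum over the $\mm_{P_i}$. Your characteristic-polynomial-plus-Weierstrass-preparation route is a concrete alternative that avoids that lemma entirely, but it relies in exactly the same hidden way on working over $\mathcal X$ rather than $X$; the spot where you slipped is the spot where the paper's setup does the heavy lifting.
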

\begin{proof}
We denote by $\widehat{\O}_{\mathcal{X},\widetilde{Q}}$ the completed local ring (and Krull domain) at the closed point $\widetilde{Q}$ and by $\widehat{\O}_{\mathcal{X},Q}$ the completed local ring (and DVR) at the non-closed point (or subscheme of codimension 1) $Q$. If $\mm_Q$ is the (minimal) prime ideal of $\widehat{\O}_{\mathcal{X},\widetilde{Q}}$ corresponding to $Q$, then $\widehat{\O}_{\mathcal{X},Q}$ is the localisation of $\widehat{\O}_{\mathcal{X},\widetilde{Q}}$ at $\mm_Q$. Well-behaved means that $s\in \widehat{\O}_{\mathcal{X},\widetilde{Q}}$. We note that the map $\rho^*\colon \widehat{\O}_{\mathcal{C},\widetilde{R}}\to \widehat{\O}_{\mathcal{X},\widetilde{Q}}$ determines an integral extension of Krull domains. We define $A=\widehat{\O}_{\mathcal{C},\widetilde{R}}$ and $B=\widehat{\O}_{\mathcal{X},\widetilde{Q}}$ and consider the minimal prime ideal $\mm_R\subset A$. To compute $\mathrm{Tr}_{B/A}(s)$, we apply the first part of Lemma \ref{tracelemma} to $A\to B$ with $\pp=\mm_R$. By assumption, $\mm_Q$ is the only minimal prime of $B$ above $\mm_R$, so we find $\mathrm{Tr}_{B/A}(s)=\mathrm{Tr}_{Q/R}(s)$. Similarly, the minimal primes above $\mm_P$ are $\mm_{P_1},\ldots,\mm_{P_e}$ (with possible repetition) and we apply the second part of Lemma \ref{tracelemma} to $A\to B$ and the ideal $\pp=\mm_P$. This yields $\mathrm{Tr}_{B/A}(s)(P)=\sum_{i=1}^es(P_i)$, as desired.  
\end{proof}

\subsection{Lemmas to control the prime bound}
The following lemmas are needed to control the lower bound on the prime $p$ in Theorems \ref{bigtheorem} and \ref{biggertheorem}. 

\begin{lemma}\label{lemma:first-inequality}
Let $N$, $T$ and $\ell>T$ be positive integers, and $p>N+T$ a prime number. Then  we have
\begin{equation}\label{InequalityOrders}
 \ell-T\geq N\ord_p(\ell)+1.    
\end{equation}
\end{lemma}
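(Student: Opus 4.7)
The plan is to split into cases on whether $p \mid \ell$ and use the hypothesis $p > N + T$ together with Bernoulli's inequality.

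First I would handle the trivial case $\ord_p(\ell) = 0$: here the right-hand side of \eqref{InequalityOrders} is simply $1$, and since $\ell > T$ with both integers we get $\ell - T \geq 1$, which gives the inequality immediately.

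Next, assume $k := \ord_p(\ell) \geq 1$, so that $\ell \geq p^k$. Since $p > N+T$ and both are positive integers, $p \geq N+T+1$, hence
\[
\ell \;\geq\; p^k \;\geq\; (N+T+1)^k.
\]
The key elementary step is Bernoulli's inequality: for $k \geq 1$ and $N+T \geq 0$,
\[
(1 + (N+T))^k \;\geq\; 1 + k(N+T) \;=\; 1 + kN + kT \;\geq\; 1 + kN + T,
\]
where the last step uses $k \geq 1$. Combining the two displayed inequalities gives $\ell - T \geq Nk + 1 = N\ord_p(\ell) + 1$, as desired.

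The argument is essentially routine; the only point to watch is the use of integrality (to go from $p > N+T$ to $p \geq N+T+1$), and the observation that $kT \geq T$ requires $k \geq 1$, which is exactly the assumption in the nontrivial case. No other obstacles arise.
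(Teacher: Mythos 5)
Your proposal is correct and follows exactly the same route as the paper: split on whether $\ord_p(\ell)$ is zero, then for $\ord_p(\ell)=a\geq 1$ use $\ell\geq p^a\geq (N+T+1)^a$, Bernoulli's inequality $(N+T+1)^a\geq 1+a(N+T)$, and the estimate $a(N+T)\geq aN+T$ for $a\geq 1$. The only cosmetic difference is that you name Bernoulli's inequality explicitly while the paper leaves it implicit in the chain of inequalities.
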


\begin{proof}
Write $\ell=p^a b$, where $a=\mathrm{ord}_p(\ell)$. When $a=0$, this is true because $\ell>T$. Otherwise, we note that $\ell\geq p^a$, and
\[
p^a-T\geq (N+T+1)^a-T\geq 1+a(N+T)-T\geq 1+aN,
\]
as desired.
\end{proof}

Next, we consider the inclusions of  polynomial rings $\Q[s_1,\ldots,s_n]=\Q[e_1,\ldots,e_n]\subset \Q[z_1,\ldots,z_n]$, where $s_k=\sum_{\ell=1}^nz_{\ell}^k$ for each $k\geq 1$ (we also make this definition for $k>n$), and $e_k$ is the $k$th elementary symmetric polynomial in $z_1,\ldots,z_n$. We adopt the conventions $e_k=0$ for $k>n$ and $e_0=1$. Let $\mm\subset \Q[e_1,\ldots,e_n]$ be the ideal generated by $e_1,\ldots,e_n$. Then in the ring of formal power series $\Q[z_1,\ldots,z_n]\llbracket X\rrbracket$, Newton's identities can be given the following compact form:
\begin{align}
    \label{newton}
\sum_{k=0}^{\infty} (-1)^ke_kX^k=\prod_{i=1}^n(1-z_iX)= \mathrm{exp}\left(-\sum_{k=1}^{\infty}\frac{s_k}{k}X^k\right).
\end{align}

\begin{lemma}
\label{prelimlemma}
\begin{itemize}
    \item[(1)] For each $k\geq 1$, the difference 
    $s_k/k -(-1)^{k-1}e_k$ is in $\mm^2$, and equals a sum of terms $b\cdot e_{i_1}\cdots e_{i_{\ell}}$ (for $\ell\geq 2$ and $i_1,\ldots,i_{\ell}\geq 1$) with coefficient $b\in \Q$ of denominator dividing $\ell$. 
   \item[(2)] If $k>n$ we have
\[
\frac{s_k}{k} -\left( \frac{(-1)^k}{2}\sum_{i=k-n}^n e_ie_{k-i} \right) \in \mm^3,
\]
and it equals again a sum of terms $b\cdot e_{i_1}\cdots e_{i_{\ell}}$ (for $\ell\geq 3$) with coefficient $b$ of denominator dividing $\ell$. In particular, if $k>2n$ then $s_k/k \in \mm^3$. 
    \item[(3)] Suppose that $S$ is a ring with an ideal $I$, and we have $a_1,\ldots,a_{2n}\in I$ satisfying for each $i\leq n$ that $a_i\equiv a_{n+i} \mod I^w$, where $w\in \Z_{\geq 1}$. Then $a_1\cdots a_n\equiv a_{n+1}\cdots a_{2n} \mod I^{w+n-1}$. 
\end{itemize}
\end{lemma}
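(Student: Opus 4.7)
The plan is to derive Parts (1) and (2) from a single computation: take the formal logarithm of Newton's identity (\ref{newton}), then expand using the Mercator series and track the $\mathfrak{m}$-adic order and the denominators of the coefficients. Part (3) is a direct elementary expansion.

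Concretely, starting from (\ref{newton}), I would take $\log$ of both sides to obtain
\[
-\sum_{k=1}^{\infty}\frac{s_k}{k}X^k \;=\; \log\!\left(1+\sum_{k\geq 1}(-1)^k e_k X^k\right) \;=\; \sum_{\ell\geq 1}\frac{(-1)^{\ell-1}}{\ell}\left(\sum_{k\geq 1}(-1)^k e_k X^k\right)^{\!\ell},
\]
an identity that makes sense in $\Q[e_1,\ldots,e_n]\llbracket X\rrbracket$ since the inner power series has no constant term. For each $\ell\geq 1$ the $\ell$-th summand is a power series whose $X^k$-coefficient is, up to the rational factor $(-1)^{\ell-1}/\ell$, a $\Z$-linear combination of monomials $e_{i_1}\cdots e_{i_\ell}$ with $i_1,\ldots,i_\ell\geq 1$ and $i_1+\cdots+i_\ell=k$. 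Thus every contribution to $s_k/k$ coming from the $\ell$-th summand lies in $\mathfrak{m}^{\ell}$ and has denominator dividing $\ell$.

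For Part (1), I separate the $\ell=1$ term, whose $X^k$-coefficient is simply $(-1)^k e_k$; moving it across and negating gives $s_k/k - (-1)^{k-1}e_k$, and the remaining $\ell\geq 2$ terms produce precisely a sum of products $b\cdot e_{i_1}\cdots e_{i_\ell}$ with $\ell\geq 2$ and denominators dividing $\ell$, which is what Part (1) asserts. For Part (2), I use that $e_k=0$ whenever $k>n$, which kills the $\ell=1$ contribution when $k>n$. The next term, $\ell=2$, contributes to $s_k/k$ exactly
\[
\frac{(-1)^k}{2}\sum_{\substack{i+j=k\\ i,j\geq 1}}e_i e_j \;=\; \frac{(-1)^k}{2}\sum_{i=k-n}^{n} e_i e_{k-i},
\]
where the second equality again uses $e_m=0$ for $m>n$ to restrict the summation range. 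The $\ell\geq 3$ terms then give an element of $\mathfrak{m}^3$ with the claimed denominator bound. When $k>2n$, any product $e_i e_{k-i}$ with $i+(k-i)=k>2n$ must involve some $e_m$ with $m>n$ and hence vanishes, so the $\ell=2$ term drops out too and $s_k/k\in\mathfrak{m}^3$.

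For Part (3), write $a_{n+i}=a_i+b_i$ with $b_i\in I^w$, and expand
\[
a_{n+1}\cdots a_{2n}\;=\;\prod_{i=1}^n(a_i+b_i)\;=\;a_1\cdots a_n\;+\;\sum_{\emptyset\neq S\subset\{1,\ldots,n\}}\;\prod_{i\in S}b_i\prod_{i\notin S}a_i.
\]
Each non-trivial summand has at least one $b_i\in I^w$ and all remaining $n-1$ factors lie in $I$, so lives in $I^{w+n-1}$, giving the congruence. The only genuine subtlety anywhere is the denominator bookkeeping in Parts (1) and (2); once one commits to the log-of-Newton identity above, the bound ``denominator dividing $\ell$'' reads off directly from the factor $1/\ell$ in the Mercator expansion, so I expect no serious obstacle.
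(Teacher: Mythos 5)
Your Parts (1) and (2) follow exactly the paper's route: take $\log$ of \eqref{newton} and compare $X^k$-coefficients, reading off the $\mathfrak{m}$-adic order from the power $\ell$ and the denominator bound from the $1/\ell$ in the Mercator/log expansion. That is precisely the one-line argument the authors give. The only divergence is Part (3): the paper writes down the telescoping identity
\[
a_1\cdots a_n-a_{n+1}\cdots a_{2n}=\sum_{i=1}^n(a_i-a_{i+n})\,a_{i+1}a_{i+2}\cdots a_{i+n-1},
\]
where each summand is manifestly in $I^{w}\cdot I^{n-1}$, whereas you substitute $a_{n+i}=a_i+b_i$ and expand the product, noting that any summand with $|S|=j\geq 1$ lies in $I^{jw+(n-j)}\subseteq I^{w+n-1}$ since $j(w-1)\geq w-1$. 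Both are correct and equally elementary; the paper's identity is a touch slicker (one factor of $I^w$ per summand, nothing to optimize), while your binomial expansion is arguably the more automatic thing to write and generalizes transparently. No gaps.
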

\begin{proof}
 For part (3), it suffices to note the equality
\[
a_1\cdots a_n-a_{n+1}\cdots a_{2n}=\sum_{i=1}^n(a_i-a_{i+n})a_{i+1}a_{i+2}\cdots a_{i+n-1}.
\]
Parts (1) and (2) follow directly from (\ref{newton}) after applying $\mathrm{Log}$ to both sides of the equation and comparing coefficients for $X^k$.
\end{proof}

\subsection{Proof of Theorem \ref{bigtheorem}}
\label{proofsec}
For simplicity of exposition (to avoid using triple indices), we assume that $h=1$. Then there is one map $\rho\colon X\to C$, and we assume moreover that 
\[
\mathcal{Q}_1=\rho^*(m\cdot R) \text{ for }R\in C(K)\text{ and }m\in \Z_{>0}.
\]For the general case, one can simply sum all arguments that follow over the points in the curves $C_j$, because $\mathcal{Q}_1,\ldots,\mathcal{Q}_h$ are disjoint. We write $\rho^*R=\sum_{i=1}^k m_iQ_i$, where $m_i\geq 0$ for each $i$, and 
\[
\mathcal{Q}=\sum_{i=1}^k n_iQ_i,
\]
where $n_i\geq m\cdot m_i$ for each $i$. We choose the ordering so that $m_i=0$ for all $i>k'$, where $k'\leq k$. Note that $\mathcal{Q}_0=\sum_{i=1}^k(n_i-mm_i)Q_i$ and $\mathcal{Q}_1=\sum_{i=1}^{k'}mm_iQ_i$. 

Let $L=K(\mathcal{Q})$ be the field obtained by adjoining the coordinates of $Q_1,\ldots,Q_k$ and consider all points, maps and curves as base-changed to $L$. Let $t_R$ be a well-behaved uniformiser at $R$. Using Lemma \ref{uniformiserlemma}, at each of the distinct points $Q_i$, consider a well-behaved uniformiser $t_{Q_i}\in \widehat{\O}_{X,Q_i}$ satisfying $t_{Q_i}^{m_i}=t_R$ if $i\leq k'$. For this we need to extend $L$ by an unramified extension, which we still denote by $L$. Recall that $\mathcal{P}$ is the residue disc of $\mathcal{Q}$. Consider $F:=L(\mathcal{P})$ and let $\qq$ be  a prime in $F$ above $\pp$. We write \[\mathcal{P}=\sum_{i=1}^k(P_{i,1}+\cdots+P_{i,n_i}), \text{ where }P_{i,j}\equiv Q_i \mod \qq \text{
for each }j\in \{1,\ldots,n_i\}.
\]
We write $z_{i,j}:=t_{Q_i}(P_{i,j})$  and $s_{i,\ell}=\sum_{j=1}^{n_i}z_{i,j}^{\ell}$, and let $e_{i,\ell}$ be the $\ell$th elementary symmetric polynomial in $z_{i,1},\ldots,z_{i,n_i}$. Note that each $z_{i,j}\in \qq\O_{F_{\qq}}$ because $t_{Q_i}$ is well-behaved. Fix one annihilating differential $\om$ of trace zero for all maps and write $\mathrm{loc}_{Q_i}(\om)=\sum_{j\geq 0} a_{j}(Q_i,\om)t_{Q_i}^j\dd t_{Q_i}$. After multiplying $\om$ with a scalar, we may assume $\om\in H^0(\mathcal{X},\Omega)$ and each $a_{j}(Q_i,\om)\in \O_{L_{\pp}}$. 

For $i>k'$, we know that $e_{i,1}=\cdots=e_{i,n_i}=0$ implies that $z_{i,j}=0$ for all $j\in \{1,\ldots,n_i\}$, and hence  $P_{i,j}=Q_i$ for all $j\in \{1,\ldots,n_i\}$ by injectivity of $t_{Q_i}$ on residue discs. For $i\leq k'$, we would like to be able to read off from the $e_{i,\ell}$ whether $\mathcal{P}$ is (partially) a pullback.
\begin{lemma}\label{pull-back condition lemma}
Consider the notation as defined so far, and define the effective divisor $\mathcal{P}'=\sum_{i=1}^{k'}(P_{i,1}+\cdots+P_{i,n_i})$ (sum only up to $k'$). 

Then $\mathcal{P'}=\sum_{i=1}^{k'}(n_i-mm_i)Q_i+\rho^*\mathcal{S}$ for some $\mathcal{S}\in C^{(m)}(\overline{K})$ if (and only if) there exists a polynomial $f\in \overline{K}_{\rr}[X]$ of degree $m$ with constant coefficient 1 such that for all $i\in \{1,\ldots,k'\}$ we have an equality of polynomials
\[
1-e_{i,1}X+\cdots+(-1)^{n_i}e_{i,n_i}X^{n_i}=f(X^{m_i}).
\]
\end{lemma}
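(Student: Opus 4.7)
The plan is to translate the polynomial identity into divisor-theoretic data via the local $m_i$-th power structure of $\rho$ provided by Lemma \ref{uniformiserlemma}. First, by Vieta's formula applied in $\overline{K}_\rr[X]$,
\[
1-e_{i,1}X+\cdots+(-1)^{n_i}e_{i,n_i}X^{n_i} \;=\; \prod_{\alpha=1}^{n_i}(1-z_{i,\alpha}X),
\]
so the claimed equation is equivalent to $\prod_\alpha(1-z_{i,\alpha}X)=f(X^{m_i})$ for all $i\in\{1,\ldots,k'\}$.

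For the forward (``only if'') direction, suppose $\mathcal{P}'=\sum_{i=1}^{k'}(n_i-mm_i)Q_i+\rho^*\mathcal{S}$ with $\mathcal{S}=S_1+\cdots+S_m$. Since $\mathcal{P}'$ reduces mod $\qq$ to $\sum_i n_i\widetilde{Q}_i$, each $S_j$ lies in $D(R)$. Set $u_j:=t_R(S_j)\in\pp\O_{\overline{K}_\rr}$ and define
\[
f(Y):=\prod_{j=1}^m(1-u_jY),
\]
a degree $m$ polynomial with constant coefficient $1$. By Lemma \ref{uniformiserlemma} we have $t_{Q_i}^{m_i}=t_R$ in $\widehat{\O}_{X,Q_i}$, so $\rho$ restricted to $D(Q_i)$ is the $m_i$-th power map in these uniformisers. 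The hypothesis $p\nmid m_i$ from condition (1) of Theorem \ref{bigtheorem} makes $Y^{m_i}-u_j$ separable, so the $m_i$ preimages of $S_j$ inside $D(Q_i)$ have $t_{Q_i}$-values equal to the distinct $m_i$-th roots of $u_j$. Together with the $n_i-mm_i$ zero values contributed by the excess copies of $Q_i$, one computes
\[
\prod_\alpha(1-z_{i,\alpha}X) \;=\; \prod_{j=1}^m\prod_{\zeta^{m_i}=u_j}(1-\zeta X) \;=\; \prod_{j=1}^m\bigl(1-u_jX^{m_i}\bigr) \;=\; f(X^{m_i}).
\]

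For the converse (``if''), suppose such $f$ exists and factor $f(Y)=\prod_{j=1}^m(1-u_jY)$ over an algebraic closure of $\overline{K}_\rr$. I would first verify that each $u_j\in\pp$: since every $z_{i,\alpha}$ lies in the maximal ideal, all $e_{i,\ell}$ with $\ell\geq 1$ do as well, so the non-constant coefficients of $f$ lie in $\pp$, and a Newton polygon argument on $f$ (whose Newton polygon starts at height $0$ and continues strictly above $0$) forces each reciprocal root $u_j$ to lie in $\pp$. Hence each $u_j$ equals $t_R(S_j)$ for a unique $S_j\in D(R)$; setting $\mathcal{S}:=\sum_j S_j$ and running the above calculation in reverse identifies the multisets $\{z_{i,\alpha}\}_\alpha$ with the union (over $j$) of the $m_i$-th roots of $u_j$ together with $n_i-mm_i$ zeros, which is precisely the image of $\sum_i(n_i-mm_i)Q_i+\rho^*\mathcal{S}$ under the $t_{Q_i}$-coordinates. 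Injectivity of $t_{Q_i}$ on $D(Q_i)$ then yields $\mathcal{P}'=\sum_{i=1}^{k'}(n_i-mm_i)Q_i+\rho^*\mathcal{S}$.

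The main obstacle is the bookkeeping for the excess term $\sum(n_i-mm_i)Q_i$ alongside the pullback part, and guaranteeing separability of $Y^{m_i}-u_j$ via $p\nmid m_i$; without separability, a single root of $Y^{m_i}-u_j$ would contribute with the wrong multiplicity and the multiset of $t_{Q_i}$-values would fail to reconstruct a genuine pullback divisor.
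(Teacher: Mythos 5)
Your proof is essentially correct and uses the same underlying mechanism as the paper: factoring both sides into linear pieces via Vieta, identifying the local action of $\rho$ on the residue disc $D(Q_i)$ with the $m_i$-th power map in the well-behaved uniformisers $t_{Q_i}^{m_i}=\rho^*t_R$ supplied by Lemma \ref{uniformiserlemma}, and matching up the multisets of $t_{Q_i}$-values. For the ``if'' direction (the only one the paper actually proves, and the only one needed for Theorem \ref{bigtheorem}), the paper reads off the decomposition directly from the equality $\prod_j(1-z_{i,j}X)=f(X^{m_i})$ and invokes Lemma \ref{localtracelemma} to identify each $\rho^*S_\ell\cap D(Q_i)$, whereas you factor $f$ and run a Newton-polygon argument to place the reciprocal roots $u_j$ inside $\pp$. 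That argument is valid and is a perfectly good alternative to the paper's shortcut (which gets $u_j\in\pp$ for free from $u_j=z_{i,j}^{m_i}$ with $z_{i,j}\in\pp$); both routes land in the same place, so this is a presentational, not a mathematical, difference.

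One small flag: you lean on ``separability of $Y^{m_i}-u_j$ via $p\nmid m_i$'' as the crux, but all of this takes place over $\overline{K}_{\rr}$, which has characteristic zero, so $Y^{m_i}-u_j$ is automatically separable for $u_j\neq 0$ (and $u_j\neq 0$ is forced by $\deg f=m$). The condition $p\nmid m_i$ does matter, but it enters earlier, in Lemma \ref{uniformiserlemma}, to build the well-behaved uniformiser with $s_Q^{m_i}=\rho^*t_R$ via Hensel's lemma; it is not needed to distinguish the roots. Relatedly, in the ``only if'' direction you should note that $f=\prod_j(1-u_jY)$ has degree exactly $m$ only when every $u_j=t_R(S_j)$ is non-zero (i.e.\ no $S_j$ equals $R$); the paper sidesteps this by not proving that direction. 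Neither point damages the substance of the argument.
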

\begin{proof}
Denote by $\mathcal{R}$ the set of inverses of roots of $f$. Choose $i\in \{1,\ldots,k'\}$. By assumption, we find that
\[
\prod_{j=1}^{n_i}(1-z_{i,j}X)=f(X^{m_i}). 
\]
In particular, after reordering we find that 
\begin{itemize}
    \item[(a)] $z_{i,mm_i+1}=0,\ldots,z_{i,n_i}=0$, and moreover 
    \item[(b)] $\{z_{i,j}^{m_i}\mid j\in \{1,\ldots,mm_i\}\}=\mathcal{R}$. 
\end{itemize}
Because well-behaved uniformisers are injective on residue classes, (a) implies that $Q_i=P_{i,mm_i+1}=\cdots=P_{i,n_i}$.  Recall that $z_{i,j}^{m_i}=t_R(\rho(P_{i,j}))$ for $j\leq mm_i$. From (b) and because $t_R$ is injective on $D(R)$, we find that  $\{\rho(P_{ij}) \mid i\in \{1,\ldots,k'\}, j \in \{1,\ldots,mm_i\}\}$ has size $m$.  Denote these points by $S_1,\ldots,S_m$. They satisfy $S_{\ell}\equiv R \mod\qq$ for each $\ell$, and $\{t_R(S_{\ell}) \mid \ell \in \{1,\ldots,m\}\}=\mathcal{R}$.

Write $\rho^*(S_{\ell})=\sum_{i=1}^{k'}\sum_{j=1}^{m_i}P_{\ell ij}'$, where again $P_{\ell ij}'\equiv Q_i$  modulo a fixed prime above $\qq$ in $F(\rho^*S_{\ell})$ for each $i$. Write $z_{\ell ij}'=t_{Q_i}(P'_{\ell ij})$, and define $e_{\ell ij}'$ to be the $j$th elementary symmetric polynomial in $z_{\ell,i,1}',\ldots,z_{\ell,i,m_i}'$.  Recall that $t_{Q_i}^{m_i}=\rho^*t_R$. From Lemma \ref{localtracelemma}, we see that $e_{\ell,i,j}'=0$ for each $j  \in \{1,\ldots,m_i-1\}$ and $e'_{\ell,i,m_i}=(-1)^{m_i+1}t_R(S_\ell)$. We conclude that  
\[
\left(\prod_{\ell=1}^m\prod_{j=1}^{m_i}(1-z_{\ell,i,j}'X)\right)=\prod_{\ell=1}^m(1-t_R(S_\ell)X^{m_i})=f(X^{m_i})
\]
for each $i$, from which it follows by injectivity of $t_{Q_i}$ that \[
P_{i,1}+\cdots+P_{i,mm_i}=\sum_{\ell=1}^m(P_{\ell,i,1}'+\cdots+P_{\ell,i,m_i}'),
\]
and thus $\mathcal{P}=\sum_{i=1}^k(n_i-mm_i)Q_i+\rho^*(S_1+\cdots+S_m)$.
\end{proof}
We continue the proof of Theorem \ref{bigtheorem}. 
As $[\mathcal{P}-\mathcal{Q}]\in J(K)$ and $\om$ is a annihilating differential, we find that
$
\int_{\mathcal{Q}}^{\mathcal{P}}\om =0.
$
This is a sum of tiny integrals equal by Proposition \ref{intprops2} (i) to
\begin{equation}\label{IntegralEquationGeneral11}
\sum_{i=1}^{k}\sum_{\ell=1}^{\infty} a_{\ell-1}(Q_i,\om)\dfrac{s_{i,\ell}}{\ell}=0.
\end{equation}
For $i\in \{1,\ldots,k'\}$, write $p_{i,\ell}:=\frac{1}{m_{k'}}\sum_{j=1}^{n_{k'}}\mathrm{Tr}_{Q_i/R}(t_{Q_i}^{\ell})(\rho(P_{k',j}))$. Then $p_{i,\ell}=0$ when $m_i\nmid \ell$ and $\frac{p_{i,jm_i}}{jm_i}=\frac{s_{k',jm_{k'}}}{jm_{k'}}$ for $j\geq 1$.  Now we consider $i=k'$, and subtract
\[
0=\frac{1}{m_{k'}}\int_{n_{k'}Q_{k'}}^{P_{k',1}+\cdots+P_{k',n_{k'}}}\rho^*\mathrm{Tr}(\om)=\sum_{i=1}^{k'}\sum_{\ell=1}^{\infty}\frac{a_{\ell-1}(Q_i,\om)}{\ell}p_{i,\ell}
\]
from  (\ref{IntegralEquationGeneral11}). Here we used Proposition \ref{coefficients conditions} to evaluate the integral. 

We thus obtain
\begin{equation}
\label{eqnn}
\sum_{i=1}^{k'}\sum_{\ell=1}^{\infty} a_{\ell-1}(Q_i,\om)\left(\dfrac{s_{i,\ell}-p_{i,\ell}}{\ell}\right)+\sum_{i=k'+1}^{k}\sum_{\ell\geq 1} a_{\ell-1}(Q_i,\om)\dfrac{s_{i,\ell}}{\ell}=0.
\end{equation}

For $i\leq k'$, define $r_{i,\ell}$ for $\ell\in \Z_{\geq 1}$ by $r_{i,\ell}=0$ when $m_i\nmid \ell$ and $r_{i,jm_i}=(-1)^{jm_i-jm_{k'}}e_{k',jm_{k'}}$. For $i>k'$, define $r_{i,\ell}=0$ for all $\ell$. 
 We now use  Lemma \ref{prelimlemma} (1) to rewrite traces/power sums in terms of elementary symmetric polynomials: 
 \[
 p_{i,jm_i}/jm_i=s_{k',jm_{k'}}/jm_{k'}=(-1)^{jm_i-1}r_{i,jm_i}+\hot\] and $s_{i,\ell}/\ell=(-1)^{\ell-1}e_{i,\ell}+\hot$.
 This yields
\begin{align}\label{powerserieseqn}
0=\sum_{i=1}^{k}\sum_{\ell=1}^{n_i}(-1)^{\ell-1}a_{\ell-1}(Q_i,\om)(e_{i,\ell}-r_{i,\ell})+\hot,
\end{align}
where the higher order terms are of the form
     \begin{align}\label{hot}
    \frac{\al}{s}\left(e_{i,i_1}\cdots e_{i,i_{s}}
    -r_{i,i_1}\cdots r_{i,i_s}\right) \text{ with } s\geq 2 \text{ and } \al\in \O_{L_{\pp}}.
    \end{align}
     
 Now define $f(X)=1+(-1)^{m_{k'}}e_{k',m_{k'}}X+\cdots+(-1)^{mm_{k'}}e_{k',mm_{k'}}X^{m}$. Then for each $i\leq k'$, we find that $f(X^{m_i})=1-r_{i,1}X+\cdots+(-1)^{n_i}r_{i,n_i}X^{n_i}$. In view of Lemma \ref{pull-back condition lemma}, it is our aim to show that for each $i\in \{1,\ldots ,k'\}$, we have
 \[
 f(X^{m_i})=1-e_{i,1}X+\cdots+(-1)^{n_i}e_{i,n_i}X^{n_i}, 
 \]
 and for $i>k'$ we have $e_{i,\ell}=0$ for all $\ell\in \{1,\ldots,n_i\}$. In other words, we aim to show $e_{i,\ell}=r_{i,\ell}$ for all $i\in \{1,\ldots,k\}$ and $\ell\leq n_i$. 
 
We first show that $e_{i,\ell},r_{i,\ell}\in L_{\pp}$.  Note that $F/L$ is Galois. Suppose that $\sigma\in \mathrm{Gal}(F_{\qq}/L_{\pp})$. For each $P_{i,j}\in \mathcal{P}$ (that is, in the support of $\mathcal{P}$)  reducing mod $\qq$ to $\widetilde{P_{i,j}}=\widetilde{Q_i}$, also $P_{i,j}^{\sigma}$ must reduce to $\widetilde{Q_i}=\widetilde{Q_i^{\sigma}}$. Also $P_{i,j}^{\sigma}\in \mathcal{P}$ as $\mathcal{P}$ is $K$-rational, so $P_{i,j}^{\sigma}=P_{i,j'}$ for some $j'\in \{1,\ldots,n_i\}$. As moreover $t_{Q_i}^{\sigma}=t_{Q_i}$, we find that $e_{i,\ell}^{\sigma}=e_{i,\ell}$. By definition, then also $r_{i,\ell}^{\sigma}=r_{i,\ell}$, and all $e_{i,\ell}$ and $r_{i,\ell}$ are in $L_{\pp}$. 
We define
\[
\nu:=\mathrm{min}_{\substack{i\geq 1\\ \ell \leq n_i}}v_{\pp}(e_{i,\ell}-r_{i,\ell}),
\]
where $v_{\pp}$ is $\pp$-adic valuation. We argue by contradiction and assume that $\nu<\infty$. Recall that $\nu\geq 1$ because $\mathcal{P}$ and $\mathcal{Q}$ are in the same residue disc.

Note that in fact $e_{i,\ell}\in K(Q_i)$ and $r_{i,\ell}\in K(Q_{k'})$. Let $\pp_i$ be the prime of $K(Q_i,Q_{k'})$ below $\pp$ and let $\nu_i:=\lceil \nu/e_{\pp/\pp_i}\rceil$, where $e_{\pp/\pp_i}$ denotes the ramification index. Recall that $N\geq e_{\pp_i/p}$. We apply  Lemma \ref{prelimlemma} (3) with $I=\pp_i$, which yields $e_{i,i_1}\cdots e_{i,i_s}\equiv r_{i,i_1}\cdots r_{i,i_s}\mod \pp_i^{\nu_i+s-1}$. By Lemma \ref{lemma:first-inequality} with $T=1$, and because $p\geq e_{\pp_i/p}+2$, we find that $s-1\geq e_{\pp_i/p}\ord_{p}(s)+1$. We conclude that each of the higher order terms (\ref{hot}) vanishes mod $\pp_i^{\nu_i+1}$, hence mod $\pp^{\nu+1}$. 

Now let $\om_1,\ldots,\om_q\in H^0(\mathcal{X},\Omega)$ be linearly independent generators for the space of annihilating differentials with trace zero with respect to each $\rho_i$. We obtain equation (\ref{powerserieseqn}) for each $\om\in \{\om_1,\ldots,\om_q\}$. Let $\mathcal{A}$ be the matrix made up of the $v(\omega_j,t_{Q_i},n_i)$, so that its reduction mod $\pp$ is the matrix $\widetilde{\mathcal{A}}$ from the statement of the theorem. Denote by $\mathcal{B}$ the matrix obtained from $\mathcal{A}$ by removing for each $j\in \{1,\ldots,m\}$ the column with entries \[
((-1)^{jm_{k'}}a_{jm_{k'}-1}(Q_{k'},\om_1),\ldots,(-1)^{jm_{k'}}a_{jm_{k'}-1}(Q_{k'},\om_q)).
\]
By Proposition \ref{coefficients conditions}, the columns of $\mathcal{A}$ satisfy $m$ linear equations, each of which has coefficient 1 for exactly one of the removed columns. Since $\widetilde{\mathcal{A}}$ has rank $n-m$, we conclude that the reduction $\widetilde{\mathcal{B}}$ of $\mathcal{B}$ has full rank $n-m$. Denote by $\mathbf{v}$ the vector of length $n-m$ with as entries the list
$e_{1,1}-r_{1,1},\ldots,e_{1,n_1}-r_{1,n_1},\ldots,e_{k,n_{k}}-r_{k,n_{k}}$, from which we remove the elements $e_{k',jm_{k'}}-r_{k',jm_{k'}}$ for $j\in \{1,\ldots, m\}$ (each of which is zero by definition of $r_{i,\ell}$). Then $\mathbf{v}=0\mod \pp^{\nu}$ by definition of $\nu$, and from (\ref{powerserieseqn}) and the analysis of the higher order terms, we conclude that
\[
\mathcal{B}\cdot \mathbf{v}=0\mod \pp^{\nu+1}.
\]
Since $\widetilde{\mathcal{B}}$ has full rank, we find that $\mathbf{v}=0\mod \pp^{\nu+1}$, contradicting the definition of $\nu$, unless indeed $\nu=\infty$ and $\mathbf{v}=0$. We find that $e_{i,\ell}=r_{i,\ell}$ for all values of $i$ and $\ell$, as desired.

For $i>k'$, this means $z_{i,j}=0$ for all $j\in \{1,\ldots,n_i\}$, so that $\mathcal{P}_{i,j}=Q_i$ for each such $i$ and $j$. We then apply Lemma \ref{pull-back condition lemma} with $\mathcal{P}'=\sum_{i=1}^{k'}(P_{i,1}+\cdots+P_{i,n_i})$ and polynomial $f$. We conclude that $\mathcal{P}$ is of the desired form.\hfill $\blacksquare$

\subsection{Chabauty using the higher order terms}\label{Subsection:biggertheorem}
In some situations, there is an obstruction that prevents the matrix in Theorem \ref{bigtheorem} to have sufficiently high rank at any prime, but we do expect the outcome of the theorem to hold true. 
\begin{example}
\label{example:problem_with_73}
Let us see an example of this obstruction.
Consider the degree 2 map $\rho\colon \; X_0(73)\to X_0^+(73)$. Let $c_0,c_{\infty}\in X_0(73)(\Q)$ be the two cusps, and consider $\mathcal{Q}=3c_0$. Note that $w_{73}$ interchanges these cusps, i.e. $\rho(c_0)=\rho(c_{\infty})$. Let $t$ be the pullback under $\rho$ of a well-behaved uniformiser at $\rho(c_0)$. Then $t$ is a well-behaved uniformiser at $c_0$ and at $c_{\infty}$. The curve $X_0(73)$ has genus 5, and its quotient $X_0^+(73)$ has genus 2. We find a 3-dimensional space of annihilating differentials with trace zero on $X_0(73)$ (c.f. \cite{box}). Denote by $\om_1,\om_2,\om_3$ a basis.  By Proposition \ref{coefficients conditions}, if we write  $\mathrm{loc}_{c_0}(\om_i)=\sum_{j\geq 0} a_{i,j}t^j \dd t$ then $\mathrm{loc}_{c_{\infty}}(\om_i)=\sum_{j\geq 0}(-a_{i,j})t^j\dd t$. Consider the matrices
\[
\mathcal{A}_0=(a_{i,j})_{\substack{1\leq i \leq 3\\ 0\leq j \leq 2}} \text{ and } \mathcal{A}_{\infty}=-\mathcal{A}_0.
\]
Then $\mathcal{A}:=(\mathcal{A}_0 \mid \mathcal{A}_{\infty})$ satisfies $\mathrm{rk}(\mathcal{A})=\mathrm{rk}(\mathcal{A}_0)$. Also, $\mathcal{A}$ is the matrix whose mod $p$ reduction corresponds to the matrix in condition (3) of Theorem \ref{bigtheorem} for the point $\mathcal{Q}=3c_0+3c_{\infty}$. If its reduction $\widetilde{\mathcal{A}}$ modulo any large prime $p$ had rank 3, the entire mod $p$ residue class of $3c_0+3c_{\infty}$ would be contained in $\rho^*\left((X_0^+(73))^{(3)}(\Q)\right)$

This is not the case, however. We compute that the Riemann--Roch space $L(3c_0+3c_{\infty})$ is 3-dimensional, which is unusually large. In fact, we find a degree 6 function $f\in L(3c_0+3c_{\infty})$ such that $w_{73}^*f \neq f$, i.e. the corresponding map $f: X\to\P^1$ does not factor via $\rho$. Now note that $3c_0+3c_{\infty}=f^*(1:0)$. So for each prime $p$ of good reduction, the points $R\in \P^1(\Q)$ such that $R\equiv (1:0) \mod p$ satisfy also that $f^*R\equiv 3c_0+3c_{\infty}$. But, as $f$ does not factor via $X_0^+(73)$, $f^*R$ is in general not the pullback of a degree 3 divisor on $X_0^+(73)$. By Theorem \ref{biggertheorem}, $\widetilde{\mathcal{A}}$ therefore cannot have rank 3, and neither can $\widetilde{\mathcal{A}}_0$. Indeed, we find that $\om_3$ satisfies $a_{3,0}=a_{3,1}=a_{3,2}=0$, so that the entire bottom row of $\mathcal{A}_0$ is zero. Therefore, condition (3) in Theorem \ref{bigtheorem} is never satisfied for $\mathcal{Q}=3c_0$.

A similar situation occurs on $X_0(57)$.
\end{example} In such cases, it can help to look further into the expansion of the 1-forms. We begin with some notation. Consider a point $Q$ on a curve $X$ and a regular 1-form $\om\in H^0(X,\Omega)$. We expand $\om$ around $Q$ in terms of a uniformiser $t_Q$ at $Q$, giving $\mathrm{loc}_Q(\om)=\sum_{j\geq 0} a_jt_Q ^j$, and define
\begin{align*}
v(\om,t_Q,\ell,k)&:=\left((-1)^{\ell-1} a_\ell, (-1)^\ell a_{\ell+1},\ldots, (-1)^{k-2}a_{k-1}\right).
\end{align*}
With this notation, we have $v(\om,t_Q,k)=v(\om,t_Q,0,k)$. For integers $j$ and $i$ such that $j+1\leq i\leq 2j$ and any $x_1,\ldots,x_j$ in some field, we define
\[
\psi_i(x_1,\ldots,x_j)=\sum_{\ell=i-j}^jx_{\ell}x_{i-\ell}.
\]
\begin{theorem}\label{biggertheorem}
Consider a number field $K$, a curve $X/K$ and $\mathcal{Q}=\sum_{i=1}^k n_iQ_i\in X^{(n)}(K)$, where $n=\sum_{i=1}^k n_i$ and $Q_1,\ldots,Q_k\in X$ are distinct points. Let $\rr$ be a prime of $K$, of good reduction for $X$, containing the rational prime $p\in \rr$. Denote by $\mathcal{X}/\O_{K_{\rr}}$ a minimal proper regular model of $X/K$. Let $N$ be the ramification index of $p$ in $K(\mathcal{Q})$. 
\begin{itemize} \item[(1)] Suppose that $p\geq N+3$.
\item[(2)]
Suppose that the space $\mathcal{V}$ of annihilating differentials has dimension at least $n$, and consider linearly independent $\omega_1,\dots,\omega_n\in \mathcal{V}$ on $X$. For each $Q_i$, choose a uniformiser $t_{Q_i}$, and suppose that the $n\times n$-matrix
\[
\mathcal{A}:=\begin{pmatrix}
v(\om_1,t_{Q_1},n_1) & \cdots & v(\om_1,t_{Q_k},n_k)\\
\vdots & \ddots & \vdots \\
v(\om_n,t_{Q_1},n_1) & \cdots & v(\om_n,t_{Q_k},n_k)
\end{pmatrix}
\]
has rank $r<n$.
\end{itemize}We thus choose uniformisers $t_{\widetilde{Q}_i}$ at $\widetilde{Q}_i$, and a basis $\widetilde{\om}_1,\ldots,\widetilde{\om}_n$ for the image of $\mathcal{V}\cap H^0(\mathcal{X},\Omega)$ under reduction mod $\rr$, such that the vectors $v(\widetilde{\om}_j,t_{\widetilde{Q}_i},n_i)$ for all $1 \leq i\leq k$ and $r<j\leq n$ are zero vectors. 
\begin{itemize}\item[(3)] Assume that the $r\times n$ matrix $\widetilde{\mathcal{A}}_1$ defined below has rank $r$.
\item[(4)] Let $\F$ be the residue field of a prime in $K(\mathcal{Q})$ above $\rr$. Suppose moreover that there are no non-zero vectors  $(\widetilde{\mathbf{x}}_1,\dots,\widetilde{\mathbf{x}}_k)\in \F^n$ solving the system of equations $\widetilde{\mathcal{L}}\cdot \widetilde{\mathbf{w}}=\textbf{0}$, where 
\[
\widetilde{\mathbf{x}}_i=(\widetilde{x}_{i,1},\ldots,\widetilde{x}_{i,n_i}),\quad \widetilde{\mathcal{L}}=\begin{pmatrix} \widetilde{\mathcal{A}}_1 & 0 \\ 0 & \widetilde{\mathcal{A}}_2\end{pmatrix}, \quad
\widetilde{\mathcal{A}}_1:=\begin{pmatrix}
v(\widetilde{\om}_1,t_{\widetilde{Q}_1},n_1) & \cdots & v(\widetilde{\om}_1,t_{\widetilde{Q}_k},n_k)\\
\vdots & \ddots & \vdots \\
v(\widetilde{\om}_r,t_{\widetilde{Q}_1},n_1) & \cdots & v(\widetilde{\om}_r,t_{\widetilde{Q}_k},n_k)
\end{pmatrix},
\]
\[
\widetilde{\mathcal{A}}_2=\begin{pmatrix}-v(\widetilde{\om}_{r+1},t_{\widetilde{Q}_1},n_1,2n_1) & \cdots & -v(\widetilde{\om}_{r+1},t_{\widetilde{Q}_k},n_k,2n_k)\\
\vdots & \ddots & \vdots \\-v(\widetilde{\om}_{n},t_{\widetilde{Q}_1},n_1,2n_1) & \cdots & -v(\widetilde{\om}_{n},t_{\widetilde{Q}_k},n_k,2n_k) \end{pmatrix},
\]
and 
\[
\widetilde{\mathbf{w}}=(\widetilde{\mathbf{x}}_1,\ldots,\widetilde{\mathbf{x}}_k, \psi_{n_1+1}(\widetilde{\mathbf{x}}_1),\ldots,\psi_{2n_1}(\widetilde{\mathbf{x}}_1),\psi_{n_2+1}(\widetilde{\mathbf{x}}_2),\ldots,\psi_{2n_2}(\widetilde{\mathbf{x}}_2),\ldots, \psi_{n_k+1}(\widetilde{\mathbf{x}}_k),\ldots,\psi_{2n_k}(\widetilde{\mathbf{x}}_k))^T.
\]
\end{itemize}

Then $\mathcal{Q}\in X^{(n)}(K)$ is alone in its mod $\rr$ residue class.
\end{theorem}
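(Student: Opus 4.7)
The plan is to follow the strategy of the proof of Theorem \ref{bigtheorem} but to exploit the next order term of each tiny integral. Let $L := K(\mathcal{P},\mathcal{Q})$ and fix a prime $\qq$ of $L$ above $\pp$. Write $\mathcal{P} = \sum_{i=1}^k (P_{i,1}+\cdots+P_{i,n_i})$ with $P_{i,j} \equiv Q_i \bmod \qq$, and put $z_{i,j} := t_{Q_i}(P_{i,j})$, with $e_{i,\ell}$ the $\ell$-th elementary symmetric polynomial in $z_{i,1},\ldots,z_{i,n_i}$ and $\psi_{i,\ell} := \sum_{m=\ell-n_i}^{n_i} e_{i,m}e_{i,\ell-m}$ for $n_i<\ell\leq 2n_i$. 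By Proposition \ref{intprops2}(i) together with Lemma \ref{prelimlemma}(1),(2), the annihilation $\int_{\mathcal{Q}}^{\mathcal{P}}\om_j = 0$ rewrites as
\[
0 = \sum_i\sum_{\ell=1}^{n_i}(-1)^{\ell-1}a_{\ell-1}(Q_i,\om_j)\, e_{i,\ell}\; +\; \sum_i\sum_{\ell=n_i+1}^{2n_i}\frac{(-1)^\ell}{2}a_{\ell-1}(Q_i,\om_j)\,\psi_{i,\ell}\; +\; \mathrm{hot},
\]
where $\mathrm{hot}$ is a sum of terms at least cubic in the $e_{i,\ell}$. The bound $p \geq N+3$ together with Lemma \ref{lemma:first-inequality} (applied with $T=2$) ensures that the denominators introduced by Newton's identities are $\pp$-adically harmless for the valuations to be tracked.

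Suppose for contradiction that $\mathcal{P} \neq \mathcal{Q}$. Let $\varpi$ be a uniformiser of $\pp$ in $K(\mathcal{Q})$ and set $\nu := \min_{i,\ell}v_{\pp}(e_{i,\ell}) \in \Z_{\geq 1}$. Define $\widetilde{x}_{i,\ell} := (e_{i,\ell}/\varpi^{\nu}) \bmod \pp$, so the vector $\widetilde{\mathbf{x}} = (\widetilde{\mathbf{x}}_1,\ldots,\widetilde{\mathbf{x}}_k)\in \F^{n}$ is nonzero and $\psi_{n_i+s}(\widetilde{\mathbf{x}}_i) \equiv (\psi_{i,n_i+s}/\varpi^{2\nu})\bmod \pp$. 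The Galois-invariance argument from the proof of Theorem \ref{bigtheorem} transfers verbatim to show that $\widetilde{x}_{i,\ell}$ and $\psi_{\ell}(\widetilde{\mathbf{x}}_i)$ are fixed by $\mathrm{Gal}(L_\qq/K(\mathcal{Q})_\pp)$, hence lie in $\F$.

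For the first $r$ differentials $\om_j$ ($j\leq r$), dividing the displayed equation by $\varpi^{\nu}$ and reducing modulo $\pp$ annihilates every higher-order contribution (each in $\pp^{2\nu}\subseteq \pp^{\nu+1}$) and leaves precisely the $j$-th row of $\widetilde{\mathcal{A}}_1\,(\widetilde{\mathbf{x}}_1,\ldots,\widetilde{\mathbf{x}}_k)^T\equiv 0 \bmod \pp$. This delivers the top block of the linear system $\widetilde{\mathcal{L}}\widetilde{\mathbf{w}}\equiv 0$.

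The crux is to extract the bottom block from the remaining $n-r$ differentials. For $j>r$, the basis choice forces $a_{\ell-1}(Q_i,\om_j)\in \pp$ for every $\ell\leq n_i$, so the first linear sum in the displayed equation has $\pp$-adic valuation at least $\nu+1$, while the quadratic $\psi$-sum has valuation at least $2\nu$ and $\mathrm{hot}$ has valuation at least $\min(2\nu+1,3\nu)$. The idea is to adjust the lifts $\om_{r+1},\ldots,\om_n$ by suitable $\pp$-multiples of $\om_1,\ldots,\om_r$, exploiting the rank-$r$ span of $\widetilde{\mathcal{A}}_1$ together with the already-established relations $\widetilde{\mathcal{A}}_1\widetilde{\mathbf{x}}\equiv 0$, to push the residual linear sum into valuation at least $2\nu+1$. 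Dividing the resulting equation by $\varpi^{2\nu}$ and reducing modulo $\pp$ then isolates the $(j-r)$-th row of $\widetilde{\mathcal{A}}_2\,(\psi_{\ell}(\widetilde{\mathbf{x}}_i))^T\equiv 0 \bmod \pp$, using that the factor $1/2$ is invertible mod $\pp$ since $p\geq 5$. Assembling both blocks yields $\widetilde{\mathcal{L}}\widetilde{\mathbf{w}}\equiv 0$ with $\widetilde{\mathbf{x}}\neq 0$, contradicting hypothesis (4). The principal technical obstacle lies in this last rescaling: verifying carefully that the $\varpi$-order linear correction for $j>r$ can always be absorbed into the span of $\om_1,\ldots,\om_r$, and tracking the denominators from Newton's identities through two expansion levels under the bound $p\geq N+3$.
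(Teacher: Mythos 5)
Your overall strategy matches the paper's: set up $e_{i,\ell}$ and $\nu$, handle the first $r$ differentials linearly, the last $n-r$ quadratically, and control denominators via Lemma \ref{prelimlemma}(2) and Lemma \ref{lemma:first-inequality} with $T=2$. But there is a genuine gap at exactly the spot you flag as ``the principal technical obstacle,'' and the fix you sketch does not close it. After your basis choice you only have $a_{\ell-1}(Q_i,\om_j)\in\pp$ for $j>r$, so the residual linear sum $\sum_{i,\ell}(-1)^{\ell-1}a_{\ell-1}(Q_i,\om_j)e_{i,\ell}$ has valuation $\geq \nu+1$. A single correction $\om_j\mapsto\om_j-\varpi\sum_m d_m\om_m$ raises the relevant coefficients to valuation $\geq 2$, and using the already-established $\mathcal{A}_1\cdot\mathbf{e}\equiv 0\pmod{\pp^{\nu+1}}$ yields at best valuation $\geq\nu+2$ for the residual linear sum. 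You need $\geq 2\nu+1$ to isolate the $\widetilde{\mathcal{A}}_2$ block after dividing by $\varpi^{2\nu}$, and $\nu+2\geq 2\nu+1$ forces $\nu\leq 1$. So your correction is insufficient whenever $\nu\geq 2$.

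The paper's proof avoids the problem entirely. Hypothesis (2) says the matrix $\mathcal{A}$ has rank $r$ over the characteristic-zero field $K_\rr$, so the $\O_{K_\rr}$-module of $\om\in\mathcal{V}\cap H^0(\mathcal{X},\Omega)$ with $v(\om,t_{Q_i},n_i)=0$ \emph{exactly} (all $i$) is free of rank $n-r$. Condition (3) ensures that this kernel module reduces onto the full mod-$\pp$ annihilator, and the paper explicitly replaces $\om_{r+1},\ldots,\om_n$ with an integral basis of this exact kernel, so that $a_{\ell-1}(Q_i,\om_j)=0$ \emph{identically} for $j>r$, $\ell\leq n_i$. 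Then the linear contribution to equation \eqref{IntegralEquation1} is zero outright — no rescaling, no valuation chase — and one passes directly to the quadratic $\psi$-terms. (Your one-step correction, iterated to a $p$-adic limit, would reconstruct the same exact-kernel choice; but that limit needs to be carried out, not gestured at.) The rest of your argument — Galois-invariance of $e_{i,\ell}$ via the $K$-rationality of $\mathcal{P}$, invertibility of $2$ since $p\geq N+3\geq 5$, and the contradiction against condition (4) — is in line with the paper.
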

\begin{remark}
We emphasize that $\mathrm{rk}(\mathcal{A})<n$ means that modulo any prime $\pp$ of $K(\mathcal{Q})$ the reduction of $\mathcal{A}$ mod $\pp$ has rank smaller than $n$, and Theorem \ref{bigtheorem} therefore cannot be applied.
\end{remark}
\begin{proof}
Let $\pp$ be a prime above $\rr$ in $L:=K(\mathcal{Q})$ with uniformiser $\pi\in \pp$. Let $\mathcal{P}=\sum_{i=1}^{k}\sum_{j=1}^{n_i} P_{i,j}\in X^{(n)}(K)$ belong to the residue disc of $\mathcal{Q}$, where the sum is arranged such that points $P_{i,j}$ reduce to the same point as $Q_i$ modulo  a prime $\qq$ of $F:=L(\mathcal{P})$  above $\pp$. For each $i$, choose $t_{Q_i}$ to be a well-behaved uniformiser at $Q_i$, and for each $i$ and $j$, define $z_{i,j}=t_{Q_i}(P_{i,j})\in \O_{F_{\qq}}$. Define $s_{i,\ell}=\sum_{j=1}^{n_i}z_{i,j}^{\ell}$, and let $e_{i,\ell}$ be the $\ell$th elementary symmetric polynomial in $z_{i,1},\ldots,z_{i,n_i}$. 

In the statement of the theorem, condition (3) implies that $\mathrm{Span}\{\widetilde{\om}_{r+1},\ldots,\widetilde{\om}_n\}$ is exactly the space of differentials $\widetilde{\om}$ on $\widetilde{X}$ satisfying $v(\widetilde{\om},t_{\widetilde{Q}_i},n_i)=0$ for all $i$.  Note that condition (4) is independent of the choice of bases for $\mathrm{Span}\{\widetilde{\om}_1,\ldots,\widetilde{\om}_r\}$ and $\mathrm{Span}\{\widetilde{\om}_{r+1},\ldots,\widetilde{\om}_n\}$. Now let $\om_1,\ldots,\om_n\in \mathcal{V}\cap H^0(\mathcal{X},\Omega)$ be linearly independent such that $v(\om_j,t_{Q_i},n_i)=0$ for all $i$ and for each $j\in \{r+1,\ldots,n\}$. Having chosen well-behaved uniformisers and integral differential forms we conclude that condition (4) holds for the reductions $\widetilde{\om}_1,\ldots,\widetilde{\om}_n$ of $\om_1,\ldots,\om_n$ modulo $\rr$. Moreover, the matrices $\widetilde{\mathcal{L}},\widetilde{\mathcal{A}}_1$ and $\widetilde{\mathcal{A}}_2$ are the mod $\pp$ reductions of corresponding matrices over $\O_{L_{\pp}}$, defined in terms of $\om_1,\ldots,\om_n$.

As in the proof of Theorem \ref{bigtheorem}, we find that each $e_{i,j}\in \O_{L_{\pp}}$ by Galois theory. Define
\[
\nu:=\mathrm{min}_{\substack{i\in \{1,\ldots,k\}\\ j\in \{1,\ldots,n_i\}}}v_{\pp}(e_{i,j})
\]
and assume that $\nu<\infty$. We aim to find a contradiction, giving $\nu=\infty$. Recall that $\nu\geq 1$ because $\mathcal{P}$ and $\mathcal{Q}$ are in the same residue disc. Now define $x_{i,\ell}$ by $e_{i,\ell}=\pi^{\nu}x_{i,\ell}$. Their mod $\pp$ reductions $\widetilde{x}_{i,\ell}$ will correspond to a non-zero solution of  $\widetilde{\mathcal{L}}\cdot \widetilde{\mathbf{w}}=\mathbf{0}$, which we shall show in two parts. 

 For  $\om\in \{\om_1,\ldots,\om_n\}$, we have at each $Q_i$ the expansion $\mathrm{loc}_{Q_i}(\om)=\sum_{\ell\geq 0}a_{\ell}(Q_i,\om)t_{Q_i}^{\ell}\dd t_{Q_i}$. As before, we obtain from $\int_{\mathcal{Q}}^{\mathcal{P}}\om =0$ the equation
\begin{equation}\label{IntegralEquation1}
0= \sum_{i=1}^k \sum_{\ell= 1}^{\infty}a_{\ell-1}(Q_i,\om)\frac{s_{i,\ell}}{\ell}=0.
\end{equation}
We again use Lemma \ref{prelimlemma} (1) to rewrite this in terms of $e_{i,\ell}$s, and apply Lemma \ref{prelimlemma} (3) and Lemma \ref{lemma:first-inequality} (with $T=1$) to the higher order terms to obtain
\begin{equation}\label{IntegralEquation3}
\sum_{i=1}^k \sum_{\ell= 1}^{n_i} (-1)^{\ell-1}a_{\ell-1}(Q_i,\om) e_{i,\ell}  \equiv 0 \pmod{\pp^{\nu+1}}.
\end{equation}
Dividing by $\pi^{\nu}$ and ranging over $\om\in \{\om_1,\ldots,\om_r\}$, we obtain $\widetilde{\mathcal{A}}_1\cdot (\widetilde{\mathbf{x}}_1,\ldots,\widetilde{\mathbf{x}}_k)^T=\mathbf{0}$, the ``upper half'' of $\widetilde{\mathcal{L}}\cdot \widetilde{\mathbf{w}}=\mathbf{0}$.

Consider now any of the remaining annihilating differential forms $\om\in \{\om_{r+1},\ldots,\om_n\}$. Recall that all $a_{\ell-1}(Q_i,\om)=0$ for $1\leq i\leq k$, $1\leq \ell\leq n_i$. Hence, using Lemma \ref{prelimlemma} (2), equation \eqref{IntegralEquation1} in this case becomes

\begin{equation}\label{IntegralEquation5}
\sum_{i=1}^k \sum_{\ell= n_i+1}^{2n_i} a_{\ell}(Q_i,\om)\cdot\dfrac{(-1)^{\ell}}{2}\sum_{m=\ell-n_i}^{n_i} e_{i,m}e_{i,\ell-m}  +\hot =0,
\end{equation}
where each of the higher order terms is of the form $\frac{b}{s}\cdot e_{i,i_1}\cdots e_{i,i_s}$ with $s\geq 3$ and $b\in \O_{L_{\pp}}$. Now $s\geq 3$ implies by Lemma \ref{lemma:first-inequality} (with $T=2$) that $v_{\pp}(e_{i,i_1}\cdots e_{i,i_s})\geq 2\nu + (s-2)\geq 2\nu+1+N\mathrm{ord}_p(s)$. We conclude that all higher order terms vanish mod $\pp^{2\nu+1}$. 
Multiplication by 2 (note that $2\notin \pp$) and division by $\pi^{2\nu}$ of (\ref{IntegralEquation5}) now yields
\[
\widetilde{\mathcal{A}}_2\cdot (\psi_{n_1+1}(\widetilde{\mathbf{x}}_1),\ldots,\psi_{2n_1}(\widetilde{\mathbf{x}}_1),\psi_{n_2+1}(\widetilde{\mathbf{x}}_2),\ldots,\psi_{2n_k}(\widetilde{\mathbf{x}}_k))^T=0,
\]
as desired. Note that $(\mathbf{x}_1,\ldots,\mathbf{x}_k)\neq \mathbf{0}\mod \pp$ by maximality of $\nu$. This solution of $\widetilde{\mathcal{L}}\cdot \widetilde{\mathbf{w}}=\mathbf{0}$ contradicts our assumption, so we must have $\nu=\infty$ and $e_{i,\ell}=0$ for all $i$ and $\ell$, so that also $z_{i,j}=0$ for all $i$ and $j$ and $\mathcal{P}=\mathcal{Q}$ by injectivity of well-behaved uniformisers on residue classes. 
\end{proof}

\section{The Mordell--Weil sieve}
\label{section_MWsieve}
\subsection{A formal description}As mentioned in Section \ref{overviewsection}, Theorem \ref{bigtheorem} can in some cases be used to compute the set of rational points on symmetric powers of curves when combined with the Mordell--Weil sieve. In this section we describe this sieve, which is similar to the sieves in \cite{siksek}, \cite{box} and \cite{box2}. 

We consider a curve $X$ with maps $\rho_i: X\to C_i$ of degree $d_i$, for $i\in \{1,\ldots,s\}$, and an integer $e$. Next, suppose we are given the following:
\begin{itemize}
    \item[(i)] A finite list of points $\mathcal{L}'\subset X^{(e)}(\Q)$. 
    \item[(ii)] A $\Q$-rational degree $e$ divisor $D_0$ on $X$.
    \item[(ii)] Explicit independent generators $D_1,\ldots,D_r$ for a subgroup $G\subset J(X)(\Q)$ and $I\in \Z_{\geq 1}$ such that $I\cdot J(X)(\Q)\subset G$. Here $r$ is the rank of $J(X)(\Q)$. 
    \item[(iii)] A list $p_1,\ldots,p_n$ of primes of good reduction for $X$.
\end{itemize}
Extend $\mathcal{L}'$ to a (possibly infinite) set $\mathcal{L}$ by adding for each point of the form $\mathcal{P}+\rho_1^*(\mathcal{R}_1)+\cdots+\rho_s^*(\mathcal{R}_s)\in \mathcal{L}'$, with $\mathcal{R}_i\in C_i^{(m_i)}(\Q)$ ($m_i\geq 0$) for each $i$, the entire set $\mathcal{P}+\rho_1^*C^{(m_1)}(\Q)+\cdots+\rho_s^*C^{(m_s)}(\Q)$ to $\mathcal{L}$. The purpose of the sieve is to show that $X^{(e)}(\Q)=\mathcal{L}$. 

To this end, we first consider the map
\[
\iota: \; X^{(e)}(\Q)\longrightarrow G,\;\; D\mapsto I\cdot [D-D_0].
\]
Next, define $A$ to be the abstract finitely generated abelian group isomorphic to $G$, with basis $e_1,\ldots,e_r\in A$, and set
\[
\phi: A\to J(X)(\Q),\;\; e_i\mapsto D_i. 
\]
For each $p\in \{p_1,\ldots,p_n\}$, we obtain a commutative diagram
\[
\begin{tikzcd}
\mathcal{L}' \arrow[r, hook] & X^{(e)}(\Q) \arrow[rr, "\iota"] \arrow[d] &  & J(X)(\Q) \arrow[d, "\mathrm{red}_p"] &  & A \arrow[ll, "\phi"'] \arrow[lld, "\phi_p"'] \\
                             & \widetilde{X}^{(e)}(\F_p) \arrow[rr, "\iota_p"]       &  & J(\widetilde{X})(\Fp)          &  &                                             
\end{tikzcd},
\]
where $\iota_p: D\mapsto I[D-\widetilde{D}_0]$, $\widetilde{D}_0$ is the reduction of $D_0$, and $\phi_p=\mathrm{red}_p\circ \phi$. 
\begin{definition}
Define $\mathcal{M}_p\subset J(\widetilde{X})(\F_p)$ to be the subset of elements of $\mathrm{red}_p(G)\cap \iota_p(\widetilde{X}^{(e)}(\F_p))$ that are either
\begin{itemize}
    \item[(1)] not in the image of $\mathcal{L}'$, or
    \item[(2)] the image of $\mathcal{Q}=\mathcal{P}+\rho_1^*(\mathcal{R}_1)+\cdots+\rho_s^*(\mathcal{R}_s)\in \mathcal{L}'$, with $\mathcal{R}_i\in C_i^{(m_i)}(\Q)$ ($m_i\geq 0$) for each $i$, such that $\mathcal{Q}$ does \emph{not} satisfy the conditions of Theorems \ref{bigtheorem} and, when applicable, \ref{biggertheorem}. 
\end{itemize}
\end{definition}

 By definition, any hypothetical point $\mathcal{Q}\in X^{(e)}(\Q)\setminus \mathcal{L}$ satisfies $\mathrm{red}_p\circ \iota(\mathcal{Q})\in \mathcal{M}_p$. We conclude the following.
\begin{proposition}[Mordell--Weil sieve]
If 
\[
\bigcap_{i=1}^n\phi_{p_i}^{-1}(\mathcal{M}_{p_i})=\emptyset
\]
then $X^{(e)}(\Q)=\mathcal{L}$. 
\end{proposition}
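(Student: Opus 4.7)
The argument is the contrapositive of a standard Mordell--Weil sieve. Suppose $X^{(e)}(\Q)\neq \mathcal{L}$ and pick $\mathcal{Q}\in X^{(e)}(\Q)\setminus \mathcal{L}$. Since $I\cdot J(X)(\Q)\subset G$, the class $\iota(\mathcal{Q})=I[\mathcal{Q}-D_0]$ lies in $G$, so we may write $\iota(\mathcal{Q})=\phi(a)$ for some $a\in A$. I claim that $a\in\phi_{p_i}^{-1}(\mathcal{M}_{p_i})$ for every $i$, which immediately contradicts the emptiness of the intersection.

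Fix a prime $p=p_i$. Commutativity of the diagram immediately preceding the definition of $\mathcal{M}_p$ gives
\[
\phi_p(a)=\mathrm{red}_p(\iota(\mathcal{Q}))=\iota_p(\widetilde{\mathcal{Q}}),
\]
so $\phi_p(a)$ automatically lies in $\mathrm{red}_p(G)\cap \iota_p(\widetilde{X}^{(e)}(\F_p))$. It remains to verify condition (1) or (2) from the definition of $\mathcal{M}_p$. If $\phi_p(a)$ is not in the image of $\mathcal{L}'$, condition (1) holds and we are done.

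Otherwise $\phi_p(a)=\iota_p(\widetilde{\mathcal{P}})$ for some $\mathcal{P}\in \mathcal{L}'$. Assuming (as is implicit in the setup) that $\mathcal{L}'$ contains a representative from each mod $p$ residue class meeting $X^{(e)}(\Q)$, we may arrange $\widetilde{\mathcal{P}}=\widetilde{\mathcal{Q}}$, so that $\mathcal{Q}$ lies in the mod $p$ residue disc of some such $\mathcal{P}$. The crux of the argument is now to exhibit one such $\mathcal{P}$ \emph{failing} the hypotheses of Theorem \ref{bigtheorem} (or Theorem \ref{biggertheorem} where applicable): for if every $\mathcal{P}\in \mathcal{L}'$ with $\iota_p(\widetilde{\mathcal{P}})=\phi_p(a)$ and $\widetilde{\mathcal{P}}=\widetilde{\mathcal{Q}}$ satisfied those hypotheses, then by the conclusion of Theorem \ref{bigtheorem} every $\Q$-rational divisor in the residue disc of $\mathcal{P}$ would lie in the pullback family $\mathcal{P}_0+\rho_1^*C_1^{(m_1)}(\Q)+\cdots+\rho_s^*C_s^{(m_s)}(\Q)$ (the $\Q$-rationality of the pullback part being noted in the first remark after Theorem \ref{bigtheorem}). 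In particular $\mathcal{Q}$ would lie in this family, which by construction was adjoined to $\mathcal{L}'$ to form $\mathcal{L}$; so $\mathcal{Q}\in \mathcal{L}$, contradicting our choice. Hence some witness $\mathcal{P}$ violates the theorem's hypotheses and condition (2) is satisfied.

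The only conceptual step — and where the power of the previously established theorems enters — is the final one: Theorem \ref{bigtheorem} is precisely the device converting ``$\mathcal{Q}$ sits in the residue disc of $\mathcal{P}$'' into ``$\mathcal{Q}$ is in the pullback family of $\mathcal{P}$''. Everything else is a bookkeeping exercise with the commutative diagram defining $\phi_p$.
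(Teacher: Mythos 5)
Your proof is essentially the intended argument: pass to the contrapositive, chase the commutative diagram to land $\phi_p(a)=\iota_p(\widetilde{\mathcal Q})$ in $\mathrm{red}_p(G)\cap\iota_p(\widetilde X^{(e)}(\F_p))$, and then use Theorems~\ref{bigtheorem}/\ref{biggertheorem} to force membership in $\mathcal M_p$. The only place you stray is the sentence ``Assuming (as is implicit in the setup) that $\mathcal{L}'$ contains a representative from each mod $p$ residue class meeting $X^{(e)}(\Q)$.'' Nothing in the definition of the sieve grants this, and it cannot be verified a priori since $X^{(e)}(\Q)$ is precisely the unknown. You have correctly spotted that the phrase ``not in the image of $\mathcal{L}'$'' is ambiguous (the issue being that $\iota_p$ is not injective on effective degree-$e$ divisors), but the intended resolution is not to strengthen the hypotheses on $\mathcal{L}'$: rather, $\mathcal M_p$ should be read at the level of divisors, as is made explicit in the implementation of \S3.2, where one ranges over \emph{all} effective $\widetilde{\mathcal D}\in\widetilde X^{(e)}(\F_p)$ with $\iota_p(\widetilde{\mathcal D})$ a given class and discards a class only when \emph{every} such $\widetilde{\mathcal D}$ reduces from a known point in $\mathcal L'$ that satisfies the Chabauty criteria. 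Under that reading your argument closes with no additional hypothesis: the divisor $\widetilde{\mathcal Q}$ itself is the witness. Either $\widetilde{\mathcal Q}$ is not the reduction of any $\mathcal P\in\mathcal L'$, giving condition (1), or $\widetilde{\mathcal Q}=\widetilde{\mathcal P}$ for some $\mathcal P\in\mathcal L'$, and if that $\mathcal P$ satisfied the Chabauty hypotheses then $\mathcal Q$ would lie in the family adjoined to $\mathcal L$, contradicting $\mathcal Q\notin\mathcal L$; hence $\mathcal P$ fails them and condition (2) holds. So the structure of your proof is right, and the fix is a reinterpretation of $\mathcal M_p$ rather than an added hypothesis on $\mathcal L'$.
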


\begin{remark}
Naturally, this will only work in practice if the finite set $\mathcal{L}'$ contains all isolated points, as well as sufficiently many points (partially) composed of pullbacks. 
\end{remark}

\subsection{Implementing the sieve efficiently}
The groups $J(\widetilde{X})(\F_p)$ can be computed using the class group algorithm of Hess \cite{hess}. When $e$ and $p$ are large, it is important to implement the sieve in an efficient way. Rather than first computing $\mathcal{M}_{p_i}$ for each $i$ and intersecting afterwards, we compute intersections recursively. It also turns out that computing $\widetilde{X}^{(e)}(\F_p)$ and $\iota_p$ for large values of $e$ and $p$ is suboptimal, so we compute Riemann--Roch spaces instead.

For a divisor $D$ on a curve $Y$ over a field $K$, define the Riemann--Roch space 
\[
L(D):=\{f\in K(Y)^{\times} \mid \mathrm{div}(f)+D\geq 0\}\cup \{0\}.
\]
Also define $H_{i-1}:=\mathrm{ker}(\phi_{p_1})\cap\cdots\cap \mathrm{ker}(\phi_{p_{i-1}})$. Suppose that we have computed the finite set $W_{i-1}$ of $H_{i-1}$-coset representatives for $\cap_{j=1}^{i-1}\phi_{p_j}^{-1}(\mathcal{M}_{p_j})$, and we want to compute $W_i$. First, we determine the (larger) set $W_i'$ of $H_i$-coset representatives for the same intersection. For each $w\in W_i'$, we then compute the $\F_p$-vector spaces
\[
L(z+\widetilde{D}_0) \text{ for each } z \text{ such that } I\cdot z=\phi_{p_i}(w). 
\]
Often these spaces will simply be 0-dimensional and $w$ can be removed from $W_i'$. For each non-zero $f\in L(z+D_0)$, we obtain $\mathrm{div}(f)+z+\widetilde{D}_0\in \widetilde{X}^{(e)}(\F_p)$, and we verify whether it is in the image of $\mathcal{L}'$ and satisfies the conditions of Theorem \ref{bigtheorem} or Theorem \ref{biggertheorem}. This way, we determine $W_i\subset W_i'$ without the need to compute $\widetilde{X}^{(e)}(\F_p)$. \\

We mention another improvement. When $p_1$ and $p_2$ are distinct primes such that $\#J(X)(\F_{p_1})$ and $\#J(X)(\F_{p_2})$ are coprime, we have $\phi_{p_2}(W_1)=\phi_{p_2}(A)$ by the Chinese Remainder theorem. One thus tends to choose primes $p$ such that the numbers $\#J(\widetilde{X})(\F_p)$ have prime factors in common. Often these factors are powers of small primes, whereas $\#J(\widetilde{X})(\F_p)$ tends to also be divisible by a large prime factor, let us call it $r$. It is unlikely that $r$ will also divide $\#J(\widetilde{X})(\F_q)$ for any other prime $q$ we consider, hence the ``mod $r$" information is of little use. It does, however, slow the sieve down considerably, because information is stored in  $H_i$-cosets and the factor $r$ increases the index of $H_i$. We  remedy this by composing $\iota_p$ and $\phi_p$ with the multiplication-by-$r$ map $m_r$, thereby ``forgetting'' the mod $r$ information.

\section{Applying Chabauty to modular curves} 
For each $X_0(N)$ with $N\in \{53,61,65,67,73\}$, let $\rho\colon X_0(N)\to X_0^+(N)$ be the quotient map by $w_N$. 
Denote by $X_0^*(N)$ the quotient of $X_0(N)$ by the full Atkin-Lehner group.
To determine the cubic points on these curves, we use the Mordell--Weil sieve described in Section \ref{section_MWsieve} in combination with Theorems \ref{bigtheorem} and \ref{biggertheorem} to determine the isolated points in $X_0(N)^{(3)}(\Q)$, i.e. those not of the form $P+\rho^*Q$ with $P\in X_0(N)(\Q)$ and $Q\in X_0^+(N)(\Q)$.
For $X_0(57)$, we determine the entire set $X_0(N)^{(3)}(\Q)$.
On $X_0(65)$, we moreover determine the points in $X_0(65)^{(4)}(\Q)$ that are not of the form $\rho^*D$ for $D\in X_0^+(65)^{(2)}(\Q)$ or of the form $P+\rho^*Q$ with $P\in X_0(65)^{(2)}(\Q)$ and $Q\in X_0^+(65)(\Q)$. 

In this section, we determine the information necessary to apply Theorems \ref{bigtheorem} and \ref{biggertheorem} and run the sieve. 

\subsection{Models, points and differentials}
All the modular curves we consider were studied in \cite{box}, so we use the models, subgroups of Mordell--Weil groups and annihilating differentials as computed there. 

We recall that these (canonical) models for each $X_0(N)$ were computed in \cite{box} using either the Small Modular Curves package in \texttt{Magma} or the code written by \"Ozman and Siksek \cite{ozman}. Both algorithms simply compute equations satisfied by $q$-expansions of weight 2 cusp forms, following Galbraith \cite{galbraith}. 

To find cubic and quartic points on a modular curve $X_0(N)\subset \P^n$ (with coordinates $x_0,\ldots,x_n$), we have used code written by \"Ozman and Siksek \cite{ozman} to decompose intersections $H\cap X_0(N)$ of hyperplanes $H\subset \P^n$ given by 
\[
H:\; a_0x_0+\cdots +a_nx_n=0,
\]
with each $a_i\in \Z$ satisfying $|a_i|\leq C$ for a (small) bound $C$. Each irreducible component of $X_0(N)\cap H$ of degree $d$ yields a point of degree $d$ on $X_0(N)$.  

Decomposing such intersections can be time-consuming on genus 5 curves, and we did not always find all points this way. In such cases, we ran the sieve described in Section \ref{section_MWsieve} with this too small set of known points $\mathcal{L}'$. Consider the notation introduced in Section \ref{section_MWsieve}. After a number of primes $p_1,\ldots,p_n$, we found a non-empty set $W_n$ of representatives for the possible $H_n$-cosets where unknown rational points can map into. As the index of $H_n$ in $A$ is large, it is a priori unlikely that one of these cosets $w+H_n$, for $w\in W_n$, contains a ``small vector'', say with all coefficients for free generators of $A$ smaller than 100 in absolute value. So, heuristically, small vectors should only occur with good reason, for example as images of rational points. 

Using the LLL-algorithm \cite{LLL}, we searched for small vectors in these cosets $w+H_n$. For each small vector $v\in w+H_n$ found, we compute $L(D_0+\phi(v))$ and find that it is indeed 1-dimensional, generated by, say, $f$. Then in each case, indeed $\mathrm{div}(f)+D_0+\phi(v)$ was a new isolated rational point on the symmetric power.

Finally, in order to apply Theorems \ref{bigtheorem} and \ref{biggertheorem} to $X_0(N)$, we need for each prime $p$ (of good reduction for $X_0(N)$) a basis for the image $\widetilde{\mathcal{V}}_0$ of $\mathcal{V}_0\cap H^0(\mathcal{X}_N,\Omega)$ under the reduction map, where $\mathcal{X}_N/\Z_p$ is a proper minimal model of $X_0(N)$ and $\mathcal{V}_0$ is the space of annihilating differentials, with trace zero to $X_0^+(N)$ when $N\neq 57$ and $X_0^*(57)$ otherwise. In \cite[Section 3.4]{box}, it was shown for each $N\in \{53,61,65,67,73\}$ that $\mathcal{V}_0=\mathrm{Ker}(1+w_N^*)$ and $\widetilde{\mathcal{V}}_0=\mathrm{Ker}(1+\widetilde{w}_N^*)$, where $w_N^*$ and $\widetilde{w}_N^*$ are the pull-back morphisms on $\Omega_{X_0(N)/\Q}$ resp. $\Omega_{\widetilde{X}_0(N)/\F_p}$. Similarly, for $N=57$, it was shown that $\mathcal{V}_0=\mathrm{Ker}(1+w_{19}^*)\cap \mathrm{Ker}(1+w_{57}^*)$ and $\widetilde{\mathcal{V}}_0=\mathrm{Ker}(1+\widetilde{w}_{19}^*)\cap \mathrm{Ker}(1+\widetilde{w}_{57}^*)$. This allows us to compute those annihilating differentials and verify the conditions of Theorems \ref{bigtheorem} and \ref{biggertheorem} in practice. 

\subsection{Computing Mordell-Weil groups}
\label{subsection_MWgroups}
For each $N \in\{53,57,61,65,67,73\}$, the first named author \cite{box} computes a finite index subgroup of $J_0(N)(\Q)$.
Apart from $N=57$, the index of these subgroups is shown to divide $2$.
Thanks to our Mordell-Weil sieve \S \ref{section_MWsieve}, this almost completely suffices for our purposes.
The only cases where they do not are $N=57,65$.
For $N=57$ we produce a subgroup with index dividing $2$; this appears to be needed purely for computational reasons.
Whereas when $N=65$ the issue is somewhat deeper.
In this case we verify the subgroup given in \cite{box} is the entire Mordell-Weil group.

The method used in \cite{box} to compute subgroups of $J_0(N)(\Q)$ with bounded index can be broken down into two steps.
First, one finds a subgroup of bounded index in the free part of $J_0(N)(\Q)$.
Then one computes the torsion subgroup of $J_0(N)(\Q)$.
This is done by combining theorems of Manin--Drinfeld and Mazur with bounds given by $J_0(N)(\F_p)$ for $p\nmid N$, see \cite[Lemma 3.2]{box}.
We mention that bounds may also be obtained using Hecke operators as in \cite[Section 4.3]{dzb}, although this is not needed here.

Let $X/\Q$ be a (projective, non-singular) curve and $\Gamma \leq \Aut_\Q(X)$ a finite subgroup.
The quotient curve $C=X/\Gamma$ is also defined over $\Q$ and the natural map $\rho : X \rightarrow C$ has degree $\#\Gamma$.
Denote the Jacobian of $X$ by  $J(X)$ and that of $C$ by $J(C)$.
Choosing compatible base points for the maps $\iota_X: X \rightarrow J(X) $, $\iota_C: C \rightarrow J(C) $, we obtain a commutative diagram:
\sqcommdiag{X}{J(X)}{C}{J(C).}{\rho}{\iota_X}{\rho_*}{\iota_C}
If the ranks of $J(C)(\Q)$ and $J(X)(\Q)$ are equal then $\rho^*J(C)(\Q)$ gives a subgroup of index dividing $\#\Gamma$ in the free part of $J(X)(\Q)$ \cite[Prop. 3.1]{box}. 

The problem of computing subgroups of bounded index in $J(X)(\Q)$ is thus reduced to computing the free part of $J(C)(\Q)$.
When $C$ is an elliptic curve this may be accomplished via Cremona's method \cite{cremona}.
Else, if $C$ has genus 2, this may be achieved using techniques due to Stoll \cite{stoll}.

We shall always take $X = X_0(N)$ and $C = X_0^+(N)$ with $N \in \{53,57,61,65,67,73\}$. 
For all these values of $N$, the curve $X_0^+(N)$ has genus one or two.

\begin{example}Let us give a few more details in the case $X= X_0(57)$.
Here, $C= X^+_0(57)$ has a model defined by $y^2 = x^6 - 2x^5 + 3x^4 + 3x^2 - 2x + 1$.
Stoll's method \cite{Stoll_Chab_sans_MW} may be used to show the free part of $J_0^+(57)(\Q)$ is generated by $Q=\infty^+- \infty^-$.
Pulling $Q$ back to $J_0(57)$ gives $\rho^*(Q) = (1 : 1 : 0 : 1 : 0) + (6 : 9 : -1 : 7 : 2) - P - \overline{P}$, where \[P = \left(-\sqrt{-2}  + 4 : -4\sqrt{-2}  + 7 : \sqrt{-2} -1  : -2\sqrt{-2} +2 \right).\]
\end{example}
\subsubsection{The Mordell-Weil group of $J_0(65)$}
Let $\rho\colon X_0(65) \rightarrow X_0^+(65)$ be the quotient map. The latter is an elliptic curve and so we may determine its Mordell-Weil group.
The group
\[ G = \rho^*(J_0^+(65)(\Q)) J_0(65)(\Q)_{\mathrm{tors}}\]
 is isomorphic to $\Z \times \Z/2\Z \times \Z/84\Z$ as shown in \cite[Section 4.5]{box}.
Denote by $D$ a generator of the free part of $G$.
We use the Abel--Jacobi map $\iota: X_0(65)(\Q)^{(4)} \rightarrow J_0(\Q)$ given by $P \mapsto [P- 2\rho^*(0:1:0)]$.

A priori $G \subset J_0(65)(\Q)$ has at most index two.
Whilst the sieve described in \S \ref{section_MWsieve} normally allows one to get away with a subgroup of known finite  index, this appears not to be the case here.
Indeed, for every prime $p$ there seems to be a point on $X_0(65)(\Fp)^{(4)}$ which under the Abel--Jacobi map $\iota_p$ (compatible with $\iota$ and reduction modulo $p$) doubles to the reduction of $\rho^*([(-1:0:1)-(0:1:0)])$ (there also appear to be similar problems for four points coming from the quotient $X_0^+(65)$).
It seems likely that for each prime $p$ there is an element in the set of preimages of $\rho^*([(-1:0:1)-(0:1:0)])$ in $J_0(65)(\overline{\Q})$ under multiplication by two whose reduction belongs to $\iota_p(X_0(65)(\Fp)^{(4)})$. We have not been able to compute the full set of preimages to verify this, but this does suggest the following method to prove $G = J_0(65)(\Q)$.

First find a divisor $D_0$ defined over an extension $K/\Q$ such that $2D_0 = D$.
Computing the 2-torsion of $J_0(65)(K)$ then allows us to determine all points in $J_0(65)(K)$ which double to $D$.
Indeed, if $2D_0' = D$, then $2(D_0-D_0')=0$.
Given this set, it then suffices to decide whether any element belongs to $J_0(65)(\Q)$.

Owing in part to $J_0^+(65)$ being an elliptic curve, it was straightforward for us to find such a $D_0$.
There are algorithms and explicit formulas which compute Mumford representations for preimages under multiplication by $2$ on odd degree hyperelliptic jacobians \cite{Stoll_Chab_sans_MW, Zarhin_division_by_two_on_odd_hyperelliptic_jacs}.
In principle, this allows one to adapt our method to other curves. 

\begin{theorem}
The Mordell-Weil group $J_0(65)(\Q)$ of $X_0(65)$ is the group generated by the two subgroups \[\rho^*(J_0^+(65)(\Q)) \text{ and } J_0(65)(\Q)_{\mathrm{tors}}.\]
\end{theorem}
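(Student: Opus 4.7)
The plan is to upgrade the bound $[J_0(65)(\Q):G]\mid 2$, already established in \S\ref{subsection_MWgroups} via the pullback from $J_0^+(65)$, to an equality with $1$. If instead the index were $2$, then the rank-$1$ torsion-free quotient $J_0(65)(\Q)/J_0(65)(\Q)_{\mathrm{tors}}$ would contain an element $\overline{D'}$ with $2\overline{D'}=\overline{D}$, equivalently a $D'\in J_0(65)(\Q)$ with $2D'=D+T$ for some torsion $T\in J_0(65)(\Q)_{\mathrm{tors}}$. So the task is to enumerate all potential such $D'$ and show that none of them is in fact $\Q$-rational.

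First I would exhibit one $\overline{\Q}$-point $D_0$ with $2D_0=D$. Because $D=\rho^*D^+$ where $D^+$ generates the free part of $J_0^+(65)(\Q)$ and $J_0^+(65)$ is an elliptic curve, the classical halving formulas yield a point $D_0^+\in J_0^+(65)(K)$ over an explicit, small number field $K/\Q$ with $2D_0^+=D^+$; then $D_0:=\rho^*D_0^+\in J_0(65)(K)$ does the job.

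Next I would compute the finite group $J_0(65)(K)[2]$, using the same techniques as in \S\ref{subsection_MWgroups} (reduction at good primes, Abel--Jacobi, Riemann--Roch). By the argument in the excerpt, every $K$-rational preimage of $D$ under doubling is of the form $D_0+t$ with $t\in J_0(65)(K)[2]$, giving an explicit finite list. To account for the torsion adjustments as well, for each $T\in J_0(65)(\Q)_{\mathrm{tors}}$ I would choose a single preimage $S_T\in J_0(65)(\overline{\Q})$ of $T$, enlarge $K$ if necessary so that $S_T\in J_0(65)(K)$, and enumerate the finite candidate list $D_0+S_T+J_0(65)(K)[2]$. For each candidate I would then test $\mathrm{Gal}(\overline{\Q}/\Q)$-invariance by comparing divisor-class representatives with their Galois conjugates, equivalently by verifying whether the candidate lies in the already-known group $G$. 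If every candidate fails the test, no $D'\in J_0(65)(\Q)$ satisfies $2D'\equiv D\pmod{\text{torsion}}$, so the index equals $1$ and $J_0(65)(\Q)=G$, as desired.

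The main obstacle is computational rather than conceptual. Although $J_0(65)(\overline{\Q})[2]\cong(\Z/2\Z)^{10}$ in principle, one only needs its $K$-rational part, which is far smaller and tractable. The delicate step is producing Mumford-style or divisor-class representations for the candidate preimages on a genus-$5$ Jacobian over a nontrivial extension in a form explicit enough to test Galois-invariance in \texttt{Magma}. The fact that $D$ is pulled back from the elliptic curve $J_0^+(65)$ is what keeps both $K$ and the relevant piece of $2$-torsion small, in line with the analogous explicit division-by-two computations of Stoll and Zarhin cited in the excerpt.
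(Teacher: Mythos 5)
Your plan for the case $T=0$ --- pull back a square root of $D^{+}$ from the elliptic curve $X_0^+(65)$, show $J_0(65)(K)_{\mathrm{tors}}=J_0(65)(\Q)_{\mathrm{tors}}$ for the resulting small quadratic field $K$, and deduce that $D_0\notin J_0(65)(\Q)$ --- is exactly what the paper does. But for the non-trivial torsion offsets $D+T$ you propose to choose a half $S_T$ of each $T\in J_0(65)(\Q)_{\mathrm{tors}}$, enlarge $K$ until each $S_T$ becomes rational, and re-run the candidate enumeration. That is much heavier than necessary: $D+T$ and $D+T'$ are simultaneously doubles in $J(\Q)$ iff $T-T'\in 2J(\Q)_{\mathrm{tors}}$, so only the three non-trivial classes in $J(\Q)_{\mathrm{tors}}/2J(\Q)_{\mathrm{tors}}\cong(\Z/2\Z)^2$ need testing, and the paper dispatches those three, namely $D+T_2$, $D+T_{84}$ and $D+T_2+T_{84}$, simply by checking they are not doubles in $J(\F_7)$ after reduction mod $7$. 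Enlarging $K$ to contain halves of torsion points of order up to $84$ would make the torsion computation over $K$ far harder and is completely avoidable.

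Your final step also has a genuine gap: you assert that testing Galois-invariance of a candidate $D'$ is \emph{equivalently} checking whether $D'$ lies in the known subgroup $G$. That equivalence is false and the test is vacuous here. If $2D'=D+T$ with $T$ torsion, then $D'\notin G$ automatically, because $D+T\notin 2G=2\Z D\oplus 2J(\Q)_{\mathrm{tors}}$; so a $G$-membership test is always negative and yields no information. Conversely, $D'\in J(\Q)$ does not imply $D'\in G$ --- that is precisely what is being proved. What you need is a concrete, verifiable obstruction to $\Q$-rationality. The paper provides one: since $J(K)[2]=J(\Q)[2]$, every candidate for the $T=0$ case is a translate of $D_0$ by an element of $J(\Q)[2]$, hence $\Q$-rational iff $D_0$ is; and $D_0-D_0^{c}=\rho^{*}Q$, where $c$ generates $\mathrm{Gal}(K/\Q)$ and $Q$ is the unique non-trivial point of $X_0^+(65)(\Q)[2]$, so it suffices to verify $\rho^{*}Q\neq 0$, which is done mod $7$. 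Without some such explicit criterion your enumeration cannot conclude.
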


\begin{proof}
Let us abbreviate $J_0(65)$ to $J$.
Set $G= \langle\rho^*(J_0^+(65)(\Q)), J(\Q)_{\mathrm{tors}}\rangle$.
In \cite{box}, it is shown that $G \cong \Z \times \Z/2\Z \times \Z/84\Z$.
Explicit generators are given (they are the same as in \S \ref{subsubsection_65cubic}), let us denote them by $D,T_2,T_{84}$ respectively.
In particular, $D = \rho^*\left((1:0:1)-(0:1:0)\right)$ generates the free part of $G$ (here we use the models as given in \S \ref{subsubsection_65cubic}). By Proposition \cite[Prop. 3.1]{box} and the above it suffices to show no element in $D+\langle T_2,T_{84}\rangle$ is the double of a point in $J(\Q)$, or equivalently, none of $D$, $D+T_2$, $D+T_{84}$ and $D+T_2+T_{84}$ are doubles.
The last three may easily be checked by reducing modulo $7$.
We now focus our attention on $D$.

There are four points which double to $(1:0:1)$ on the elliptic curve $X^+_0(65)$. Two of these are defined over $\Q(\sqrt{5})$, the other two over  $\Q(\sqrt{13})$.
Let $d_0$ be any of these points.
The pullback $D_0 := \rho^*(d_0-(0:1:0))$ then satisfies $2D_0=D$.

We claim $J(K)_{\mathrm{tors}}=J(\Q)_{\mathrm{tors}}$ for $K=\Q(\sqrt{5}), \Q(\sqrt{13})$.
To verify this, we compute $J(\Fp)$ as an abstract group for primes $p \neq 5, 13$ split in $K$.
For these primes we have embeddings of abstract groups $J(K)_{\mathrm{tors}}\hookrightarrow J(\Fp)$ which allow us to show the lower bound given by $J(\Q)_{\mathrm{tors}} \cong \Z/2\Z \times \Z/84\Z$ is tight.
The primes 11 and 19 split in $\Q(\sqrt{5})$ and the corresponding groups give
\[
J(\F_{11})=\Z/2\Z\times \frac{\Z}{2^2\cdot 3\cdot 5\cdot 7^2\cdot 37\Z} \text{ and } J(\F_{19})=\frac{\Z}{2\cdot 3\cdot 23\Z}\times \frac{\Z}{2^2\cdot 3\cdot 7 \cdot 13\cdot 23},
\]
from which it is quickly deduced that $J(\Q(\sqrt{5}))_{\mathrm{tors}}=\Z/2\Z\times \Z/84\Z$.
For $K=\Q(\sqrt{13})$ one may use 17 and 23.

We thus have the equality of sets
\[S := 
\{x\in J(K) \mid 2x=D\}=\{D_0+T \mid T\in J(\Q)[2]\}.
\]
Hence we are left to show $D_0$ is not defined over $\Q$.
Denote by $d_0^c$ the Galois conjugate of $d_0$. Then both $d_0+d_0^c$ and $2d_0$ are in $X^+_0(65)(\Q)$, so also $Q:=d_0-d_0^c\in X^+_0(65)(\Q)$. Now $Q=Q^c=-Q$, so $Q$ is the unique non-trivial point in $X^+_0(65)(\Q)[2]$. We find that $D_0-D_0^c=\rho^*Q$, and it suffices to show that $\rho^*Q\neq 0 \in J(\Q)$. We can check this modulo 7. 

Hence none of the elements in $S$ is in $J(\Q)$, and so we must have that
$
J(\Q)=G,
$
as desired.
\end{proof}

\subsection{Two examples}\label{Subsection:Examples}
We now give explicit examples of Theorems \ref{bigtheorem} and \ref{biggertheorem} in use.

\begin{example}\label{firstex}
Consider the non-cuspidal rational point $Q$ ramified under the map $\rho\colon X_0(67) \rightarrow X_0^+(67)$.
This gives rise to the point $3Q$ in $X_0(67)(\Q)^{(3)}$.
We will apply Theorem \ref{bigtheorem} with $\mathfrak{r}=19$ to show any other element in its residue disc also belongs to the set $Q +\rho^*X_0^+(67)(\Q)$.
Writing $3Q= Q + \rho^*\rho(Q)$, we see it is enough to show the matrix $\widetilde{\mathcal{A}}$ has rank two.
Computing $\widetilde{\mathcal{A}}$ with respect to a uniformiser $s$ at $Q$ such that $s^2=\rho^*t$, where $t$ is a uniformiser at $\rho(Q)$, we find
\[ \widetilde{\mathcal{A}} =
\begin{pmatrix}
15 & 0 & 0 \\
2 & 0 & 2 \\
6 & 0 & 11
\end{pmatrix}
\]
which clearly has rank 2. Note the second column is  identically zero, in agreement with Proposition \ref{coefficients conditions}.
\end{example}

\begin{example}
As explained in Example \ref{example:problem_with_73}, Theorem \ref{bigtheorem} will always fail for the points $3c_0,3c_\infty \in X_0(73)(\Q)^{(3)}$, where $c_0,c_\infty \in X_0(73)(\Q)$ are the cusps.
To fix this we need expand further into the coefficients of the differentials. We verify the conditions of Theorem \ref{biggertheorem} at $\rr=19$.
Given a choice of uniformiser and differentials, the corresponding matrices for $3c_\infty$  are
\[\mathcal{A} = 
\begin{pmatrix}
  2 & -13 & 47\\
  0  & -1  &11\\
  0  & 0  & 0
\end{pmatrix}
\]
which clearly has rank 2, and the $\F_{19}$-matrices 
\[
\widetilde{\mathcal{A}}_1 =
\begin{pmatrix}
9 & 1 & 1 \\
0 & 16 & 9 
\end{pmatrix}
\text{ and }
\widetilde{\mathcal{A}}_2 =
\begin{pmatrix}
2 & 6 & 3
\end{pmatrix}
.
\]
One then checks $ \widetilde{\mathcal{L}}\cdot (x,y,z,y^2+2xz,2yz,z^2)^T=\mathbf{0}$ has no non-zero solutions for $x,y,z \in \F_{19}$.

Note Theorem \ref{biggertheorem} does not require one to form $\widetilde{\mathcal{A}}_1$ from the reductions of the uniformiser and differentials used for computing $\mathcal{A}$.
For this reason, $\widetilde{\mathcal{A}}_1$ above does not coincide with the reduction of the first two rows of $\mathcal{A}$.
\end{example}

\section{Results}
\label{section_results}
\subsection{$\Q$-curves}
\label{subsection_Qcurves}
An elliptic curve $E$ defined over a number field $K$ is said to be a $\Q$-curve if it is $\overline{\Q}$ isogenous to all of its $\Gal(\overline{\Q}/\Q)$ conjugates.
For example, all elliptic curves with rational $j$-invariant are $\Q$-curves, as are CM elliptic curves (for elliptic curves with CM by the maximal order this may be deduced from Propositions 1.2 and 2.1 in \S II of \cite{advancedsilverman}).

For $N \in \{53,57,61,65,67,73\}$ there are only finitely many cubic points on $X_0(N)$.
These points are listed \S \ref{subsection_cubicpoints}.
We find these points give rise to $\Q$-curves exactly when the elliptic curves have CM.

\begin{proposition}
 Let $K/\Q$ be a non-trivial extension of odd degree.
 Then non-CM points in $X_0(N)(K)$ for $N\in \{53,57,61,65,67,73\}$ do not give rise to $\Q$-curves.
\end{proposition}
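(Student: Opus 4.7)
The plan is to combine a characterisation of non-CM $\Q$-curves (due essentially to Elkies, with refinements by Gonz\'alez for composite levels) with an elementary group-theoretic argument that constrains the field of definition of a point $(E,C)\in X_0(N)$.

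The key input is the following: for a non-CM $(E,C)\in X_0(N)(\overline{\Q})$, the elliptic curve $E$ is a $\Q$-curve if and only if the image $\pi_N(E,C)\in X_0^*(N)(\overline{\Q})$ is $\Q$-rational, where $\pi_N\colon X_0(N)\to X_0^*(N)$ is the quotient by the full Atkin--Lehner group $W_N$. Granting this, I will suppose $P=(E,C)\in X_0(N)(K)$ is a non-CM point whose minimal field of definition is $K$, and assume for contradiction that $E$ is a $\Q$-curve. Then $\pi_N(P)\in X_0^*(N)(\Q)$, so the entire $\Gal(\overline{\Q}/\Q)$-orbit of $P$ lies in the fibre $\pi_N^{-1}(\pi_N(P))$, which is a $W_N$-orbit inside $X_0(N)(\overline{\Q})$.

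For each $N\in\{53,57,61,65,67,73\}$ the group $W_N\cong(\Z/2\Z)^{\omega(N)}$ is an elementary abelian $2$-group of order $2$ or $4$, and each of its elements is defined over $\Q$; in particular $\Gal(\overline{\Q}/\Q)$ acts trivially on $W_N$. Writing $\sigma P = w_\sigma P$ with $w_\sigma\in W_N$ therefore yields a well-defined homomorphism $\Gal(K/\Q)\hookrightarrow W_N/\mathrm{Stab}_{W_N}(P)$ with trivial kernel; this forces $[K:\Q]$ to divide $|W_N|$ and hence to be a power of $2$. Combined with the hypothesis that $[K:\Q]$ is odd and greater than $1$, this gives the desired contradiction.

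The principal obstacle is justifying the invocation of the Elkies-type characterisation in the direction we need here, namely that a non-CM $\Q$-curve admitting a cyclic $N$-isogeny descends to a $\Q$-rational point on $X_0^*(N)$. For the prime cases $N\in\{53,61,67,73\}$ this is essentially folklore, since $W_N=\langle w_N\rangle$ has order $2$ and the reduction to isogenies between Galois conjugates is immediate; for the composite cases $N\in\{57,65\}$ it requires tracing the $\Q$-curve structure through the full Atkin--Lehner orbit of $(E,C)$, in the spirit of the theory of polyquadratic $\Q$-curves.
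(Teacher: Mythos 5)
Your proof hinges on the ``key input'' that, for a non-CM point $(E,C)\in X_0(N)$, the curve $E$ is a $\Q$-curve \emph{if and only if} $\pi_N(E,C)$ is $\Q$-rational on $X_0^*(N)$. The ``if'' direction is fine: $\Q$-rationality of the image forces each $\sigma(E,C)$ into the $W_N$-orbit of $(E,C)$, and Atkin--Lehner involutions are given by cyclic isogenies, so $E$ is isogenous to all conjugates. But the ``only if'' direction --- the one your argument actually uses --- is false, and the error is fatal.

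The point is that the $\Q$-curve structure of $E$ is a priori unrelated to the given cyclic $N$-isogeny. A $\Q$-curve $E$ has an intrinsic ``degree'' (a squarefree integer $d$ recording the isogeny degrees to its conjugates), and the Elkies/Gonz\'alez parametrisation attaches to $E$ a $\Q$-rational point on $X_0^*(d)$, \emph{not} on $X_0^*(N)$ for an arbitrary $N$-isogeny that $E$ happens to admit over some extension. Concretely, take any non-CM $E/\Q$ (a $\Q$-curve of degree $1$) that acquires a cyclic $N$-isogeny over some non-trivial extension $L$, so $(E_L,C)\in X_0(N)(L)$. Then $\sigma(E_L,C) = (E_L, C^\sigma)$ has the same underlying curve $E$, whereas the non-identity Atkin--Lehner translates of $(E_L,C)$ have underlying curves $E/C'$ for non-trivial $C'\subseteq C$; since $E$ is not CM, $E\not\cong E/C'$, so $\sigma(E_L,C)$ lies in the $W_N$-orbit only if $C^\sigma=C$, i.e.\ only if the isogeny is already rational. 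Thus $\pi_N(E_L,C)$ is generally \emph{not} $\Q$-rational even though $E$ is a $\Q$-curve. Your group-theoretic descent to $W_N$ therefore never gets started, and the bound ``$[K:\Q]$ divides $|W_N|$'' does not follow.

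The paper instead invokes Cremona--Najman directly: the odd-degree hypothesis forces a non-CM $\Q$-curve over $K$ to be isogenous to a curve with rational $j$-invariant (their Theorems 1.1 and 2.7); one then transfers the cyclic isogeny to that rational-$j$ curve (their Corollary 3.4) and contradicts $X_0(N)(\Q)$ consisting only of cusps. That input is exactly what your argument is missing: it is the odd degree, not the $X_0^*(N)$-picture, that rigidifies the situation. If you want a purely Atkin--Lehner argument, you would first have to prove that for these $N$ the map $\rho\colon X\to X_0(N)$ with rational $j$ does not exist, which is what the Cremona--Najman result provides.
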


\begin{proof}
 Let $P \in X_0(N)(K)$ be a non-CM point. While this is a well-known fact, we explain why we can take a representative $(E,C)$ of $P$ such that $E$ is defined over $K$ and $C\subset E[N]$ is a $\mathrm{Gal}(\overline{K}/K)$-stable cyclic subgroup of order $N$. Take any representative $(E,C)$ of $P$. The $j$-map $j\colon X_0(N)\to X(1)$ is defined over $\Q$. Since $j(E)=j(P)\in K$, we may take a different representative $(E',C')$ where $E'$ is defined over $K$ (e.g. by considering the Legendre form). As $P$ is a non-CM point, $E'$ is a quadratic twist of $E$ (see e.g. \cite[Lemma A.4]{QcurvesCremonaNajman}, \cite[\S X, Prop. 5.4]{SilvermanI}).
 The mod $\ell$ representations ($\ell$ prime) of $E'$ and $E$ are either equal or differ by a quadratic twist, so in particular $E'$ has a $\mathrm{Gal}(\overline{K}/K)$-stable subgroup of order $\ell$ if and only if $E$ has one also. In other words, we may choose $C'$ to be $\mathrm{Gal}(\overline{K}/K)$-stable.
 For $N \neq 65$, the statement now follows from  applying \cite[Theorem 1.1]{QcurvesCremonaNajman} to $E'$.
 
Suppose $P \in X_0(65)(K)$ is a non-CM point which gives rise to a $\Q$-curve.
Take the representative $E'/K$ of $P$ as above.
Then by Theorem 2.7 of \cite{QcurvesCremonaNajman}, $E'$ is isogenous to an elliptic curve with rational $j$-invariant and thus has rational isogenies of degrees $5$ and $13$, by Corollary 3.4 in \cite{QcurvesCremonaNajman}.
But $X_0(65)(\Q)$ consists of cusps, giving a contradiction.
\end{proof}

Cremona and Najman describe an algorithm to determine whether an elliptic curve is a $\Q$-curve or not in \S 5.5 of \cite{QcurvesCremonaNajman}. We use this algorithm to show the isolated quartic points on $X_0(65)$ which give rise to $\Q$-curves are exactly the CM points. 
This algorithm has been implemented in \texttt{Sage} \cite{QcurvesCremonaNajman}.
However, as part of this implementation the conductor is computed, and this appears to be too costly for our elliptic curves.
Instead, we partially implemented the algorithm in \texttt{Magma} and avoid computing the conductor.
In particular, for each of the non-CM elliptic curves $E/K$ under consideration there was a rational prime $p$ and primes $\pp,\qq$ of $K$ above $p$ such that either $E$ had bad potentially multiplicative reduction at $\pp$ but not $\qq$ or $E$ had good reduction at all primes above $p$ and the endomorphism algebras of the reductions at $\pp$ and $\qq$ were not isomorphic.
These contradict certain isogeny properties satisfied by $\Q$-curves, see Propositions 5.1 and 5.2 of \cite{QcurvesCremonaNajman}.

The curve $X_0(65)$ has infinitely many quartic points: the quotient $X_0^+(65)$ is an elliptic curve.
That is, we have a degree two map to an elliptic curve, and elliptic curves have infinitely many degree two maps to $\P^1$.
This yields infinitely many degree four maps to $\P^1$ and in particular infinitely many quartic points on $X_0(65)$.

In fact, by the Existence Theorem in Brill--Noether Theory \cite[Chapter V, pg206]{ACGH_book}, genus 5 curves have  infinitely many degree four maps to $\P^1$.
The fact $X_0(65)$ admits a degree two map to an elliptic curve is our saving grace in this instance.
Indeed, each such map turns out to factor through this elliptic curve $X_0^+(65)$, allowing us to perform our relative Chabauty methods with respect to the quotient.
On the flip side, no such quotient exists for $X_0(57)$, leaving us with an abundance of quartic points from infinitely many sources. 

Accordingly, every quartic point on $X_0(65)$ either arises from a quadratic point on $X_0^+(65) $, or is a point listed in \S \ref{subsection_quarticpoints}.
For points not coming from the quotient, we find the $\Q$-curves are exactly those with CM, just as the case was for the cubic points.
The points arising from the quotient may be $\Q$-curves (for example if they come from the full Atkin-Lehner quotient $X_0^*(65)$), but are not necessarily.
However, each of these points will give rise to a $K$-curve for some quadratic field $K$. Indeed, $w_{65}$ determines the isogeny between such a quartic point and one of its Galois conjugates.

\subsection{Cubic Points}
\label{subsection_cubicpoints}
Let $N \in \{53,57,61,65,67,73\}$. In this section we list all cubic points on $X_0(N)$, or rather a representative from each $\Gal(\overline{\Q}/\Q)$-conjugacy class.
As explained in \S \ref{subsection_MWgroups}, subgroups $G \subseteq J_0(N)(\Q)$ of index at most two are computed using $J_0^+(N)(\Q)$. We provide explicit generators for $G$ and $J_0^+(N)(\Q)$  below.

For $N\neq 57$, we apply the Mordell-Weil sieve in conjunction with Theorems \ref{bigtheorem}, \ref{biggertheorem} relative to $X_0^+(N)$ in order to provably show all points on $X_0^{(3)}(N)(\Q)$ either arise as the sum of a rational point and the pullback of a rational point from $X_0^+(N)$, or belong to a finite set.
This finite set consists of ($\Gal(\overline{\Q}/\Q)$-stable sums of) the cubic points and sums of rational and isolated degree two points.
Rational and isolated degree two points are listed in \cite[\S 4]{box}.

We note for $N=57$ there is no need to use `relative Chabauty' as there are only finitely many quadratic points on $X_0(57)$ and these can be determined provably using Chabauty.
In fact Siksek's symmetric Chabauty Theorem \cite[Theorem 3.2]{siksek} almost suffices to determine the cubic points.
But, as mentioned in Example \ref{example:problem_with_73}, there are two points on $X_0^{(3)}(57)$ which require Theorem \ref{biggertheorem}.
The only other curve where we are also required to apply Theorem \ref{biggertheorem} is $X_0(73)$.

Using the same models as in \cite[\S 4]{box}, we list a representative for every Galois conjugacy class of cubic points on $X_0(N)$, along with the corresponding $j$-invariant and indicate whether the class of elliptic curves have CM.
The only points which give rise to $\Q$-curves are the CM points, see \S \ref{subsection_Qcurves}.
For this reason we do not include an extra column to indicate which points are $\Q$-curves.

We recall that the \texttt{Magma} \cite{magma} code to verify all computations can be found at
\[
\texttt{\href{https://github.com/joshabox/cubicpoints/}{https://github.com/joshabox/cubicpoints/}} \;.
\]
\afterpage{%
\clearpage%

\pagestyle{plain} 
\begin{landscape}
\subsubsection{$X_0(53)$}
Model for $X_0(53)$: 
\begin{align*}
&9x _0^2  x_3 - 5  x_0  x_3^2 - 27  x_1^3 - 18  x_1^2  x_2 + 78  x_1^2  x_3 - 18  x_1  x_2^2 + 30 x_1  x_2  x_3 - 64  x_1  x_3^2 - 57  x_2^3 + 136  x_2^2  x_3 - 104  x_2  x_3^2 + 
    49  x_3^3=0,\\
    & x_0  x_2 - 2  x_0  x_3 - 3x_1^2 + 5  x_1  x_3 - 2 x_2^2 +   x_2  x_3 - 2  x_3^2=0.
\end{align*}
Genus $X_0(53)$: 4.\\
Cusps: $(1:0:0:0)$, $(1:1:1:1)$.\\
$X_0^+(53)$: elliptic curve $y^2 + xy + y = x^3 - x^2$ of conductor 53.\\\
Group structure of $J_0^+(53)(\Q)$: $\Z\cdot [Q-O]$, where $Q:=(0:-1:1)$.\\
Group structure of $G\subset J_0(53)(\Q)$: $G= \Z\cdot D_1\oplus \Z/13\Z\cdot D_{\mathrm{tor}}$, where $D_{\mathrm{tor}}=[(1:1:1:1)-(1:0:0:0)]$ and $D_1=[P+\overline{P}-(1:1:1:1)-(1:0:0:0)]=\rho^*[Q-O]$ for 
\[
P:=\left( 0, \frac16(-\sqrt{-11} + 5): 1: 1 \right)\in X_0(53)(\Q(\sqrt{-11}))
\]
satisfying $\rho(P)=Q$.\\
Primes used in sieve: 31, 17.

\begin{table}[h!]
    \centering
    \begin{tabularx}{0.82\paperheight}{>{\centering\arraybackslash}p{0.37\paperheight} >{\centering\arraybackslash}m{0.37\paperheight} >{\centering\arraybackslash}m{0.06\paperheight}}
    \multicolumn{3}{l}{Points with $\alpha$ satisfying $\alpha^3 - \alpha^2 + 5\alpha + 52 =0 $.} \\ [0.8ex]
    Coordinates & $j$-invariant & CM   \\ [0.5ex] 
        \hline \hline \rule{0pt}{2.8ex}

$(12\alpha^2 - 59\alpha + 254 : -9\alpha^2 + 22\alpha + 32 : 7\alpha^2 - 27\alpha
    + 252 : 178)$&
{$\substack{1/19383245667680019896796723(-383791426307028167821262180847852905\alpha^2 \\+
    1545152415129700179443672757063109789\alpha -
    6594966442722213746931150662879063633)}$}
& NO \\

$(\alpha^2 - 2\alpha + 7 : 3\alpha - 3 : 9 : 0)$&
$1/3(-15648377\alpha^2 + 63003373\alpha - 268896737)$
& NO \\
    \end{tabularx}
\end{table}

\begin{table}[h!]
    \centering
    \begin{tabularx}{0.82\paperheight}{>{\centering\arraybackslash}m{0.37\paperheight} >{\centering\arraybackslash}m{0.37\paperheight} >{\centering\arraybackslash}m{0.06\paperheight}}
    \multicolumn{3}{l}{Points with $\alpha$ satisfying $\alpha^3 + 11 \alpha - 8 =0 $.} \\ [0.8ex]
    Coordinates & $j$-invariant & CM   \\ [0.5ex] 
        \hline \hline \rule{0pt}{2.8ex}

$(143\alpha^2 + 63\alpha + 1740 : 53\alpha^2 + 49\alpha + 1004 : 25\alpha^2 +
    33\alpha + 612 : 524)$&
{$\substack{1/9007199254740992(429659534891842853950179275957642701\alpha^2 \\ +
    299279229875227284830201161759246904\alpha +
    4934717748383394121661044694765075727)}$}
& NO \\

$(107\alpha^2 + 72\alpha + 1345 : 42\alpha^2 + 16\alpha + 718 : 20\alpha^2 - 8\alpha
    + 420 : 328)$&
{$1/2(1594853540089340101\alpha^2 + 1110894791062576754\alpha + 18317182399622734527)$}
& NO \\

    \end{tabularx}
\end{table}

\begin{table}[h!]
    \centering
    \begin{tabularx}{0.82\paperheight}{>{\centering\arraybackslash}m{0.37\paperheight} >{\centering\arraybackslash}m{0.37\paperheight} >{\centering\arraybackslash}m{0.06\paperheight}}
    \multicolumn{3}{l}{Points with $\alpha$ satisfying $\alpha^3 - \alpha^2 + 53\alpha + 147 = 0$.} \\ [0.8ex]
    Coordinates & $j$-invariant & CM   \\ [0.5ex] 
        \hline \hline \rule{0pt}{2.8ex}

$(3(47\alpha^2 - 38\alpha + 3271) : 31\alpha^2 - 250\alpha + 7331 : 3(13\alpha^2
    - 48\alpha + 1767) : 5286)$ &
{$\substack{1/134217728(5760195975601694894538881657752708\alpha^2 \\ -
    19601232799455021073603806896166691\alpha +
    352389719111433433199614166785341899)}$}
& NO \\

$(39\alpha^2 - 129\alpha + 2550 : 11\alpha^2 - 42\alpha + 927 : 4\alpha^2 - 2\alpha
    + 430 : 292)$ &
{$\substack{1/2(5908628533072\alpha^2 - 20106331622569\alpha + 361470319752701)}$}
& NO \\
    \end{tabularx}
\end{table}

\begin{table}[h!]
    \centering
    \begin{tabularx}{0.82\paperheight}{>{\centering\arraybackslash}m{0.37\paperheight} >{\centering\arraybackslash}m{0.37\paperheight} >{\centering\arraybackslash}m{0.06\paperheight} }
    \multicolumn{3}{l}{Points with $\alpha$ satisfying $\alpha^3 - 2\alpha - 3 =0$.} \\ [0.8ex]
    Coordinates & $j$-invariant & CM   \\ [0.5ex] 
        \hline \hline \rule{0pt}{2.8ex}

$(-15\alpha^2 - 27\alpha - 21 : 3\alpha^2 + 7\alpha + 9 : -\alpha^2 - \alpha + 1 : 4)$&
$-26\alpha^2 - 123\alpha - 74$
& NO \\

$(-6\alpha^2 + 81\alpha + 120 : -11\alpha^2 + 15\alpha + 220 : 18\alpha^2 + 24\alpha +
    174 : 267)$&
{$\substack{-135538657185216969134696061309629\alpha^2 \\ - 256613875330390280002820617300050\alpha -
    214766963414423266964158036517900}$}
& NO \\

    \end{tabularx}
\end{table}

\begin{table}[h!]
    \
    \begin{tabularx}{0.82\paperheight}{>{\centering\arraybackslash}m{0.37\paperheight} >{\centering\arraybackslash}m{0.37\paperheight} >{\centering\arraybackslash}m{0.06\paperheight}}
    \multicolumn{3}{l}{Points with $\alpha$ satisfying  $\alpha^3 - 3\alpha - 4=0$.} \\ [0.8ex]
    Coordinates & $j$-invariant & CM   \\ [0.5ex] 
        \hline \hline \rule{0pt}{2.8ex}

$(11\alpha^2 + 23\alpha + 20 : 3\alpha^2 + 5\alpha + 8 : \alpha^2 + \alpha + 4 : 6)$&
$117843669992\alpha^2 + 258763889291\alpha + 214668790692$
& NO \\

$(131\alpha^2 + 98\alpha + 80 : 38\alpha^2 + 80\alpha + 410 : \alpha^2 + 91\alpha +
    396 : 563)$&
{$\substack{29734407481985808940410283825504\alpha^2 + 65291506111938160265579873086771\alpha\\ +
    54165390933943542489231382553348}$}
& NO \\

    \end{tabularx}
\end{table}

\newpage
\subsubsection{$X_0(57)$}
Model for $X_0(57)$: 
\begin{align*}
&x_0x_2 - x_1^2 + 2x_1x_3 + 2x_1x_4 - 2x_2^2 - 2x_2x_3 + 3x_2x_4 - x_3^2 - 2x_3x_4 - x_4^2=0,\\
&x_0x_3 - x_1x_2 - 2x_1x_4 + 4x_2x_3 - 6x_2x_4 - x_3^2 + 5x_3x_4 - 5x_4^2=0,\\
&x_0x_4 - x_2^2 + x_2x_3 - 2x_2x_4 - 2x_4^2=0.
\end{align*}
Genus of $X_0(57)$: 5.\\
Cusps: $(1:0:0:0:0),(1:1:0:1:0),(3:3:1:2:1),(3:9/2:-1/2:7/2:1)$.\\
$X_0^+(57)$: hyperelliptic curve 
$y^2 = x^6 - 2x^5 + 3x^4 + 3x^2 - 2x + 1 $\\ 
Group structure of $J_0^+(57)(\Q)$: $  \Z \cdot[(1:1:0) - (1:-1:0)] \oplus \Z/3\Z \cdot [(0 : -1 : 1) - (1:-1:0)] $. \\
Group structure of $G\subset J_0(57)(\Q)$: $G= \Z\cdot D_1\oplus \Z/6\Z\cdot D_{\mathrm{tor},1}\oplus \Z/30\Z\cdot D_{\mathrm{tor},2}$, where $D_{\mathrm{tor},1}=[(1:1:0:1:0)-(1:0:0:0:0)]$, $D_{\mathrm{tor},2}=[(3:3:1:2:1)-(1:0:0:0:0)]$ and $D_1=[(1:1:0:1:0) + (3:9/2:-1/2:7/2:1) + - P+\overline{P}]$ where
\begin{align*}
P := \left(-\frac{1}{3}(\sqrt{-2} -1 ) + 1 : -\frac{4}{3}(\sqrt{-2} -1 ) + 1 : \frac{1}{3}(\sqrt{-2} -1 ) : -\frac{2}{3}(\sqrt{-2} -1 ) + 1 : 1\right).
\end{align*}
satisfying $\rho(P)=(1:-1:0)$.\\
Primes used in sieve: 17, 43.

\begin{table}[h!]
    \centering
    \begin{tabularx}{0.82\paperheight}{>{\centering\arraybackslash}m{0.37\paperheight} >{\centering\arraybackslash}m{0.37\paperheight} >{\centering\arraybackslash}m{0.06\paperheight} }
    \multicolumn{3}{l}{Points with $\alpha$ satisfying $\alpha^3 -\alpha^2 + \alpha +2 = 0$.} \\ [0.8ex]
     Coordinates & $j$-invariant & CM   \\ [0.5ex] 
        \hline \hline \rule{0pt}{2.8ex}

$(\alpha^2 - 3\alpha + 7 : 2(-\alpha^2 + \alpha + 1) : \alpha^2 - \alpha + 1 : -\alpha^2 + \alpha + 3 : 2)$&
$-9209701\alpha^2 + 16880722\alpha - 21658080$&
NO \\
$(2\alpha^2 - 3\alpha + 8 : 2\alpha^2 - 3\alpha + 11 : -\alpha^2 - 1 : 2\alpha^2 -
    3\alpha + 11 : 3)$ &
$-1580665385097\alpha^2 - 921044021830\alpha + 291907686304$&
NO \\
$(-2\alpha^2 + 4 : \alpha^2 - 3\alpha + 1 : \alpha^2 - 1 : \alpha^2 + 3 : 1)$&
$114\alpha^2 - 163\alpha - 218$&
NO \\
$(\alpha^2 - 3\alpha + 7 : 4(\alpha^2 + 1 ): \alpha^2 - 3\alpha - 1 : \alpha^2 - 3\alpha + 3 : 4)$ &
$-18553245055702\alpha^2 + 33591312773593\alpha - 45780203714514$ &
NO \\
    \end{tabularx}
\end{table}

\begin{table}[h!]
    \centering
    
    \begin{tabularx}{0.82\paperheight}{>{\centering\arraybackslash}m{0.37\paperheight} >{\centering\arraybackslash}m{0.37\paperheight} >{\centering\arraybackslash}m{0.06\paperheight} }
    \multicolumn{3}{l}{Points with $\alpha$ satisfying $\alpha^3 - \alpha^2 -2=0$.} \\ [0.8ex]
    Coordinates & $j$-invariant & CM   \\ [0.5ex] 
        \hline \hline \rule{0pt}{2.8ex}
$(3\alpha + 9 : -3\alpha^2 + 9\alpha + 12 : -2 : 2(3\alpha + 4) : 4)$ &
$1/2(-7449\alpha^2 - 1673\alpha - 948)$&
NO \\
$(2\alpha + 2 : -\alpha^2 + \alpha + 4 : \alpha^2 - \alpha : -\alpha^2 + \alpha + 4 : 2)$&
$1/4(-1810169986337\alpha^2 - 1259199868019\alpha - 2135110941902)$&
NO \\
$(3\alpha^2 - 2\alpha + 6 : 4(\alpha^2 - \alpha + 3) : -\alpha^2 + 2\alpha - 2 : 3\alpha^2 - 2\alpha +
    10 : 4)$&
$1/536870912(-17777315457\alpha^2 + 14416014175\alpha - 86766426948)$&
NO \\
$(3\alpha + 3 : -3\alpha^2 + 9 : 2 : -3\alpha + 7 : 2)$&
$1/1024(-73465339921\alpha^2 - 51104061891\alpha - 86652910958)$&
NO \\
    \end{tabularx}
\end{table}

\begin{table}[h!]
    \centering
    \begin{tabularx}{0.82\paperheight}{>{\centering\arraybackslash}m{0.37\paperheight} >{\centering\arraybackslash}m{0.37\paperheight} >{\centering\arraybackslash}m{0.06\paperheight} }
    \multicolumn{3}{l}{Points with $\alpha$ satisfying $\alpha^3 -16 \alpha -27=0$.} \\ [0.8ex]
    Coordinates & $j$-invariant & CM   \\ [0.5ex] 
        \hline \hline \rule{0pt}{2.8ex}
$(25\alpha^2 - 53\alpha - 243 : 4\alpha^2 - 2\alpha : -\alpha^2 + 5\alpha - 9 :
17\alpha^2 - 31\alpha - 153 : 18)$&
{$\substack{1/1570042899082081611640534563 (-38600674637161069583675311194112\alpha^2 \\ +
    124697130934598013699081937354752\alpha \\ + 256826826563492878403632537894912)}$}&
        NO \\
$(55\alpha^2 - 27\alpha + 983 : 24(2\alpha^2 + 76) : 9(-5\alpha^2 + 9\alpha + 35) :
27(\alpha^2 + 3\alpha + 65) : 648)$ &
$1/3(353435648\alpha^2 - 1133445120\alpha - 2409267200)$ &
NO \\
$(-8\alpha^2 + 20\alpha + 90 : \alpha^2 - 4\alpha : \alpha^2 - \alpha - 15 : -2\alpha^2 +
    5\alpha + 24 : 3)$ &
$1/27(-29985636352\alpha^2 - 139955503104\alpha - 173460488192)$ &
NO \\
$(5\alpha^2 + 45\alpha + 604 : -7\alpha^2 - 63\alpha + 661 : 9(3\alpha^2 - 4\alpha -
    22) : -4\alpha^2 - 36\alpha + 577 : 279)$ &
{$\substack{1/1162261467(-1754586552521644333563904\alpha^2\\ + 3777138616478919143030784\alpha +
    20593798011121996168462336)}$} &
NO \\

    \end{tabularx}
\end{table}

\newpage
\subsubsection{$X_0(61)$}
Model for $X_0(61)$: 
\begin{align*}
& x_0^2  x_3 + x_0  x_1  x_3 - 2  x_0  x_3^2 - 2  x_1^3 - 6  x_1^2  x_2 + 5  x_1^2  x_3 - 5  x_1  x_2^2 + 4  x_1  x_2  x_3 - 6  x_2^3 + 14  x_2^2  x_3 - 11  x_2  x_3^2 + 4  x_3^3=0,\\
& x_0  x_2 - x_1^2 - x_1  x_2 - 2  x_2^2 + 2  x_2  x_3 - x_3^2=0
\end{align*}
Genus of $X_0(61)$: 4.\\
Cusps: $(1:0:0:0)$, $(1:0:1:1)$.\\
$X_0^+(61)$: elliptic curve $y^2 + xy = x^3 + 6x^2 + 11x + 6$ of conductor 61.\\
Group structure of $J_0^+(61)(\Q)$: $ \Z\cdot [Q-O]$, where $Q=(-1:1:1)$.\\
Group structure of $G\subset J_0(61)(\Q)$: $G= \Z\cdot D_1\oplus \Z/5\Z\cdot D_{\mathrm{tor}}$, where $D_{\mathrm{tor}}=[(1:0:1:1)-(1:0:0:0)]$ and $D_1=[P+\overline{P}-(1:1:1:1)-(1:0:1:1)]=\rho^*[Q-O]$ for 
\[
P:=\left(0: \frac12(\sqrt{-3} - 1): 1: 1\right)\in X_0(61)(\Q(\sqrt{-3}))
\]
satisfying $\rho(P)=Q$.\\
Primes used in sieve: 31, 19, 53, 23.

\begin{table}[h!]
    \centering
    \begin{tabularx}{0.82\paperheight}{>{\centering\arraybackslash}m{0.37\paperheight} >{\centering\arraybackslash}m{0.37\paperheight} >{\centering\arraybackslash}m{0.06\paperheight} }
    \multicolumn{3}{l}{Points with $\alpha$ satisfying  $\alpha^3 - 2\alpha - 20 =0$.} \\ [0.8ex]
    Coordinates & $j$-invariant & CM   \\ [0.5ex] 
        \hline \hline \rule{0pt}{2.8ex}

$(4\alpha^2 + 16\alpha - 16 : -6\alpha^2 + 12\alpha - 12 : 5\alpha^2 + 2\alpha + 34 : 72)$&
{$\substack{-1/2147483648(1996874400742389760403631439516303\alpha^2 \\+
    5909569160562647305266609005744561\alpha + 13495086575687455058014736453622810)}$}
& NO \\

$(\alpha^2 + 2\alpha + 8 : -2\alpha - 4 : 4 : 0)$ &
$1/2(148823\alpha^2 - 301199\alpha - 470790)$
& NO \\
    \end{tabularx}
\end{table}
\newpage
\subsubsection{$X_0(65)$}
\label{subsubsection_65cubic}
Model for $X_0(65)$:
\begin{align*}
&x_0 x_2 - x_1^2 + x_1 x_4 - 2 x_2^2 - x_2 x_3 + 3 x_2 x_4 - x_3^2 + 2 x_3 x_4 - 2 x_4^2=0,\\
&x_0 x_3 - x_1 x_2 - 2 x_2^2 - x_2 x_3 + 4 x_2 x_4 - x_3^2 + 2 x_3 x_4 - 2 x_4^2=0,\\
&x_0 x_4 - x_1 x_3 - 2 x_2^2 - 3 x_2 x_3 + 5 x_2 x_4 + 3 x_3 x_4 - 3 x_4^2=0.
\end{align*}
Genus of $X_0(65)$: 5.\\
Cusps: $(1:0:0:0:0),(1:1:1:1:1),(1/3:2/3:2/3:2/3:1),(1/2:1/2:1/2:1/2:1)$.\\
$X_0^+(65)$: elliptic curve $y^2 + xy = x^3 - x$ of conductor 65.\\
Group structure of $J_0^+(65)(\Q)$: $\Z\cdot [Q-O]\oplus \Z/2\Z\cdot [(0:0:1)-O]$, where $Q=(1:0:1)$.\\
Group structure of $J_0(65)(\Q)$: $\Z\cdot D_1 \oplus \Z/2\Z\cdot (-9D_{\mathrm{tor},1}+2D_{\mathrm{tor},2})\oplus \Z/84\Z\cdot (17D_{\mathrm{tor},1}+13D_{\mathrm{tor},2})$, where $D_{\mathrm{tor},1}=[(1:1:1:1:1)-(1:0:0:0:0)]$, $D_{\mathrm{tor},2}=[(1/3:2/3:2/3:2/3:1)-(1:0:0:0:0)]$ and $D_1=[P+\overline{P}-(1:0:0:0:)-(1:1:1:1:1)]=\rho^*([Q-O])$ for
\[
P=\left(0:1:\frac12(1+i):1:1\right)\in X_0(65)(\Q(i))
\]
satisfying $\rho(P)=Q$. \\
Primes used in sieve: 17, 23.

\begin{table}[h!]
    \centering
    \begin{tabularx}{0.82\paperheight}{>{\centering\arraybackslash}m{0.37\paperheight} >{\centering\arraybackslash}m{0.37\paperheight} >{\centering\arraybackslash}m{0.06\paperheight}}
    \multicolumn{3}{l}{Points with $\alpha$ satisfying $\alpha^3 - \alpha^2 + \alpha -2=0$.} \\ [0.8ex]
 Coordinates & $j$-invariant & CM   \\ [0.5ex] 
        \hline \hline \rule{0pt}{2.8ex}
        $(-3\alpha^2 + 15\alpha + 23 : 9\alpha^2 - 2\alpha + 17 : 10\alpha^2 - 7\alpha + 38
: 7\alpha^2 + 8\alpha + 18 : 43)$&
{$\substack{-30997852136030836533752464797632\alpha^2 - 10948750243223404067192473365696\alpha \\ -
    45813810060690523791258917233216}$}
& NO \\
$(28\alpha^2 + 10\alpha + 42 : -9\alpha^2 - 3\alpha - 12 : 3\alpha^2 + \alpha + 6 : -\alpha^2 :
2)$&
$16448\alpha^2 + 49984\alpha - 102976$
& NO \\

$(16(\alpha^2 - 3\alpha + 3) : 16(\alpha^2 - \alpha + 1) : 3\alpha^2 + 7\alpha + 5 : 2(-\alpha^2 +
    3\alpha + 9) : 32)$&
$1629248\alpha^2 - 2904256\alpha + 946624$
& NO \\

$(4(3\alpha^2 - 5\alpha + 14) : 2(2\alpha^2 - 11\alpha + 40) : -5\alpha^2 - 7\alpha + 61 :
2(-4\alpha^2 - \alpha + 35) : 92)$&
{$136553952252949568\alpha^2 + 102819038189152064\alpha - 389190231909361216$}
& NO \\
    \end{tabularx}
\end{table}

\newpage
\subsubsection{$X_0(67)$}
Model for $X_0(67)$:
\begin{align*}
   &x_0 x_2 - x_1^2 + 2 x_1 x_3 + 2 x_1 x_4 - 2 x_2^2 - 2 x_2 x_3 + 3 x_2 x_4 - x_3^2 - 2 x_3 x_4 - x_4^2=0,\\
&x_0 x_3 - x_1 x_2 - 2 x_1 x_4 + 4 x_2 x_3 - 6 x_2 x_4 - x_3^2 + 5 x_3 x_4 - 5 x_4^2=0,\\
&x_0 x_4 - x_2^2 + x_2 x_3 - 2 x_2 x_4 - 2 x_4^2=0.
\end{align*}
Genus of $X_0(67)$: 5.\\
Cusps: $(1:0:0:0:0),(1/2 : 1 : 1/2 : 1/2 : 1)$.\\
$X_0^+(67)$: genus 2 hyperelliptic curve $y^2 = x^6 - 2x^5 + x^4 + 2x^3 + 2x^2 + 4x + 1$.\\
Group Structure of $J_0^+(67)(\Q)$: $ \Z\cdot [Q_1-(1:-1:0)]\oplus \Z[Q_2-(1:-1:0)]$, where $Q_1=(1:1:0)$ and $Q_2=(0:1:1)$. \\
Group Structure of $G\subset J_0(67)(\Q)$: $G = \Z\cdot D_1\oplus \Z D_2\oplus \Z/11\Z\cdot D_{\mathrm{tor}}$, where $D_{\mathrm{tor}}:=[(1/2 : 1 : 1/2 : 1/2 :1)-(1:0:0:0:0)]$, $D_1:=[P_2+\overline{P}_2-(1:0:0:0:0)-(1/2 : 1 : 1/2 : 1/2 :1)]$ and $D_2:=[P_1+\overline{P}_1-(1:0:0:0:0)-(1/2 : 1 : 1/2 : 1/2 :1)]$ for $P_1,P_2$ as below satisfying $\rho(P_1)=Q_1$ and $\rho(P_2)=Q_2$.\\
Primes used in sieve: $73,59,53,37,17,131,167,359$.

\begin{align*}
    & P_1= \left( \frac{1}{18}(- \sqrt{-2} + 4): \frac{1}{18}( \sqrt{-2} + 14): \frac{1}{18} (- \sqrt{-2} + 4): \frac19 (- \sqrt{-2} + 4): 1 \right) \\
& P_2 = \left( 0:  \frac{1}{11} ( \sqrt{-11} + 11):  \frac{1}{22} (- \sqrt{-11} + 11):  \frac{1}{22} ( \sqrt{-11} + 11): 1 \right) \\
\end{align*}

\begin{table}[h!]
    \centering
    \begin{tabularx}{0.82\paperheight}{ >{\centering\arraybackslash}m{0.37\paperheight} >{\centering\arraybackslash}m{0.37\paperheight} >{\centering\arraybackslash}m{0.06\paperheight} }
    \multicolumn{3}{l}{Points with $\alpha$ satisfying $\alpha^3 - \alpha^2 + \alpha -3=0$.} \\ [0.8ex]
      Coordinates & $j$-invariant & CM   \\ [0.5ex] 
        \hline \hline \rule{0pt}{2.8ex}
                 $(7\alpha^2 - 4\alpha + 13 : -\alpha^2 - 2\alpha + 11 : \alpha^2 + 2\alpha + 7 :
-\alpha^2 - 2\alpha + 11 : 18)$ & $-8522452\alpha^2 - 4897575\alpha - 16236160$ & NO\\

            $(19\alpha^2 - 13\alpha + 79 : -2\alpha^2 - 20\alpha + 184 : 19\alpha^2 - 13\alpha
    + 79 :-\alpha^2 - 10\alpha + 92 : 203)$    & {$-3728415892268407\alpha^2 - 2142881183120745\alpha - 7102903223125105 $} & NO\\
      \end{tabularx}
\end{table} 

\begin{table}[h!]
    \centering
    \begin{tabularx}{0.82\paperheight}{ >{\centering\arraybackslash}m{0.37\paperheight} >{\centering\arraybackslash}m{0.37\paperheight} >{\centering\arraybackslash}m{0.06\paperheight} }
    \multicolumn{3}{l}{Points with $\alpha$ satisfying $\alpha^3 - \alpha^2 -3 \alpha +5 =0$.}\\ [0.5ex]
      Coordinates & $j$-invariant & CM   \\ [0.5ex] 
        \hline \hline \rule{0pt}{2.8ex}
        $(11(3\alpha^2 - 18\alpha + 29) : 7\alpha^2 - 94\alpha + 371 : 13(\alpha^2 - 4\alpha + 9) :
2(-\alpha^2 - 7\alpha + 90) : 286)$ & {$1821644613377781192000\alpha^2 - 5318545688116020360000\alpha + 4744756895498802918000$ } &  $-268$ \\
    \end{tabularx}
\end{table}
\newpage
\subsubsection{$X_0(73)$}
Model for $X_0(73)$:
\begin{align*}
&x_0 x_2 - 2 x_1^2 + 2 x_1 x_2 - 2 x_1 x_4 - x_2^2 + 3 x_2 x_3 + 3 x_3^2 - x_4^2=0,\\
&x_0 x_3 - 1/2 x_1 x_2 - x_1 x_3 + 1/2 x_2^2 - 1/2 x_2 x_3 + x_2 x_4 - 4 x_3^2 + 9/2 x_3 x_4 - 1/2 x_4^2=0,\\
&x_0 x_4 - x_1 x_3 + x_1 x_4 - x_2 x_3 - 5 x_3^2 + 4 x_3 x_4=0.
\end{align*}
Genus of $X_0(73)$: 5.\\
Cusps: $(1:0:0:0:0),(1:1:1:0:0)$.\\
$X_0^+(73)$: genus 2 hyperelliptic curve $y^2 = x^6 + 2x^5 + x^4 + 6x^3 + 2x^2 - 4x + 1$.\\
Group Structure of $J_0^+(73)(\Q)$: $\Z\cdot [Q_1-(1:1:0)]\oplus \Z\cdot [Q_2-(1:1:0)]$, where $Q_1:=(0:-1:1)$ and $Q_2:=(0:1:1)$. \\
Group Structure of $G\subset J_0(73)(\Q)$: $G= \Z\cdot D_1\oplus \Z\cdot D_2 \oplus \Z/6\Z\cdot D_{\mathrm{tor}}$, where $D_{\mathrm{tor}}=[(1:0:0:0:0)-(1:1:1:0:0)]$, $D_1=[P_1+\overline{P}_1-(1:0:0:0:0)-(1:1:1:0:0)]$, $D_2=[P_2+\overline{P}_2-(1:0:0:0:0)-(1:1:1:0:0)]$ for $P_1$, $P_2$ as below and satisfying $\rho(P_1)=Q_1$ and $\rho(P_2)=Q_2$. \\
Primes used in sieve: $19,41,59,43,13,103$.

\begin{align*}
    & P_1 = \left( 1/7 (\sqrt{-19} - 10): \frac{1}{14} (\sqrt{-19} - 17): \frac{1}{14} (\sqrt{-19} - 17): \frac{1}{14} (\sqrt{-19} + 11): 1 \right) \\
    & P_2 = \left( \frac{1}{6} (-\sqrt{-2} - 8): -1: \frac{1}{6} (-\sqrt{-2} - 8): \frac{1}{6} (-\sqrt{-2} + 4): 1 \right)\\
\end{align*}

\begin{table}[h!]
    \centering
    \begin{tabularx}{0.82\paperheight}{ >{\centering\arraybackslash}m{0.37\paperheight} >{\centering\arraybackslash}m{0.37\paperheight} >{\centering\arraybackslash}m{0.06\paperheight} }
    \multicolumn{3}{l}{Points with $\alpha$ satisfying $\alpha^3 - \alpha^2 +7 \alpha+8=0$.} \\ [0.8ex]
 Coordinates & $j$-invariant & CM   \\ [0.5ex] 
        \hline \hline \rule{0pt}{2.8ex}
        $ (-\alpha^2 + 3\alpha - 1 : -\alpha - 2 : \alpha - 3 : 1 : 0)$ &
$-494084542528\alpha^2 + 945810849040\alpha - 4323317577856$ & NO \\

        $(5\alpha^2 - 9\alpha + 39 : \alpha^2 - \alpha + 4 : 4\alpha^2 - 7\alpha + 29 : -\alpha^2
    + 2\alpha - 7 : 2)$ &
{$\substack{118037570988753730717632\alpha^2 - 43855988354612266206411760\alpha\\ -
    40194848013188069677969536}$} & NO \\
    \end{tabularx}
\end{table}

\newpage
\subsection{Quartic Points}
\label{subsection_quarticpoints}
Recall that $X_0(57)$ and $X_0(65)$ both have genus 5 and the Mordell--Weil groups of their jacobians over $\Q$ have rank one.
This puts them within the bounds to apply Theorem \ref{bigtheorem}.
However, there is a problem.
These curves have infinitely many degree four maps to $\P^1$!
For $X_0(65)$, we are in luck: all of the maps defined over $\Q$ factor through the quotient $X_0^+(65)$.
On the other hand, $X_0(57)$ is not bielliptic, making us unable to determine its isolated quartic points.

Using the same model as in \S \ref{subsubsection_65cubic} we now list a representative for each of the 44 Galois conjugacy classes of quartic points on $X_0(65)$ which do not come from the quotient $X_0^+(65)$.
As before, the points which give rise to $\Q$-curves are exactly the CM points.

\begin{table}[h!]    
\centering  
\begin{tabularx}{0.82\paperheight}{>{\centering\arraybackslash}m{0.37\paperheight} >{\centering\arraybackslash}m{0.37\paperheight} >{\centering\arraybackslash}m{0.06\paperheight} } 
\multicolumn{3}{l}{Points with $\alpha$ satisfying $\alpha^4 + 3\alpha^2 + 1 = 0$. } \\ [0.8ex]
Coordinates & $j$-invariant & CM   \\ [0.5ex]  
\hline \hline \rule{0pt}{2.8ex}
$(-2\alpha^3 + \alpha^2 - \alpha + 1 : \alpha^3 + \alpha^2 + 1 : -\alpha^2 : \alpha^2 + 1 : 1)$ &
$1728$ 
& $-4$ \\
$(2\alpha^3 + 2\alpha^2 + 4\alpha + 12 : 4\alpha^3 - \alpha^2 + 9\alpha + 11 :
-2\alpha^3 + 3\alpha^2 - 5\alpha + 19 : 3\alpha^3 + 2\alpha^2 + 8\alpha + 19 :
18)$ &
$19691491018752\alpha^2 + 51552986141376$
& $-100$ \\
    \end{tabularx}
\end{table}

 \begin{table}[h!]     \centering     
 \begin{tabularx}{0.82\paperheight}{>{\centering\arraybackslash}m{0.37\paperheight} >{\centering\arraybackslash}m{0.37\paperheight} >{\centering\arraybackslash}m{0.06\paperheight} }
 \multicolumn{3}{l}{Points with $\alpha$ satisfying $\alpha^4 - \alpha^3 + 2\alpha^2 + \alpha + 1 =0$.} \\ [0.8ex]
 Coordinates & $j$-invariant & CM   \\ [0.5ex]          \hline \hline \rule{0pt}{2.8ex}
\small{$(8\alpha^3 + 18\alpha^2 - 46\alpha + 220 : 93\alpha^3 - 92\alpha^2 + 128\alpha
    + 509 : -35\alpha^3 + 102\alpha^2 - 100\alpha + 363 : -77\alpha^3 +
    128\alpha^2 - 220\alpha + 413 : 482)$} &
\small{$-146329141248\alpha^3 - 619860197376$}
& $-75$ \\
$(2\alpha^3 - 4\alpha^2 + 6\alpha : \alpha^3 + \alpha + 6 : -\alpha^3 + 2\alpha^2 -
    3\alpha + 2 :-\alpha^3 + \alpha^2 - \alpha + 1 : 4)$ &
$0$
& $-3$ \\
    \end{tabularx}
\end{table}

 \begin{table}[h!]     \centering     
 \begin{tabularx}{0.82\paperheight}{>{\centering\arraybackslash}m{0.37\paperheight} >{\centering\arraybackslash}m{0.37\paperheight} >{\centering\arraybackslash}m{0.06\paperheight}} 
 \multicolumn{3}{l}{Points with $\alpha$ satisfying $\alpha^4 - 2\alpha^3 - 4\alpha^2 + 4\alpha + 10 =0$.} \\ [0.8ex]
 Coordinates & $j$-invariant & CM   \\ [0.5ex]          \hline \hline \rule{0pt}{2.8ex}
\footnotesize{$(3(9\alpha^3 + 3\alpha^2 - 29\alpha + 65) : 5(4\alpha^3 + 11\alpha^2 - 29\alpha +
    74) : 23\alpha^3 - 89\alpha^2 - 58\alpha + 730 : -7\alpha^3 +
    46\alpha^2 - 58\alpha + 610 : 870)$} &
$1/6(-11331665\alpha^3 + 42997532\alpha^2 - 23611406\alpha - 39687952)$
& NO  \\
\small{$(7\alpha^3 - 15\alpha^2 - 21\alpha + 65 : -6\alpha^3 + 25\alpha^2 + \alpha - 29
: \alpha^3 - 7\alpha^2 + 14\alpha + 19 : -5\alpha^3 + 18\alpha^2 - 2\alpha + 7
: 51)$} &
{$1/393216(565729615\alpha^3 + 1124273732\alpha^2 + 11474554\alpha - 508128592)$}
& NO   \\
$(3(-3\alpha^3 - 2\alpha^2 + 8\alpha + 12) : 3(3\alpha^3 - 12\alpha - 12) :
3(-\alpha^3 + 4\alpha + 6) : -5\alpha^3 - 4\alpha^2 + 10\alpha + 20 : 6) $ &
{$1/48(-8144181273275\alpha^3 + 25312371833066\alpha^2  + 8043694506022\alpha -
    57216807377536)$}
& NO  \\
$(-2\alpha^3 + 5\alpha^2 + \alpha + 5 : 3\alpha^2 - 6 : \alpha^3 - \alpha^2 -
    5\alpha + 8 : -\alpha^3 + 4\alpha^2 - 4\alpha + 4 : 9)$ &
$1/4(103767807\alpha^3 - 312011900\alpha^2 - 149464130\alpha + 782896654)$
& NO   \\
    \end{tabularx}
\end{table}

\begin{table}[h!]     \centering     
\begin{tabularx}{0.82\paperheight}{>{\centering\arraybackslash}m{0.37\paperheight} >{\centering\arraybackslash}m{0.37\paperheight} >{\centering\arraybackslash}m{0.06\paperheight}} 
\multicolumn{3}{l}{Points with $\alpha$ satisfying $\alpha^4 - 5\alpha^2 + 36 =0$.} \\ [0.8ex]
Coordinates & $j$-invariant & CM   \\ [0.5ex]          \hline \hline \rule{0pt}{2.8ex}
\small{$(8\alpha^3 - 8\alpha^2 + 32\alpha + 160 : \alpha^3 + 3\alpha^2 + 4\alpha + 84 :
6(\alpha^3 - 5\alpha + 54) : 6(\alpha^3 - 3\alpha^2 + 4\alpha + 60) : 144)$} &
\small{$1/16384(188921499633\alpha^3 + 905537632545\alpha^2 + 1655605513116\alpha +
    766437509340)$}
& NO   \\
{$(-10\alpha^3 + 12\alpha^2 - 286\alpha - 300 : 12(2\alpha^3 + 11\alpha^2 + 17\alpha
    + 60) : 5\alpha^3 - 6\alpha^2 + 143\alpha + 954 : 4(16\alpha^3 +
    21\alpha^2 - 65\alpha + 78) : 1608)$} &
{$\substack{1/73786976294838206464(427634371581028354195137\alpha^3 -
    438836098747959200559375\alpha^2 \\ - 3302486685333702076993956\alpha +
    9869446094702964951347100)}$}
& NO  \\
    \end{tabularx}
\end{table}

\begin{table}[h!]     \centering     
\begin{tabularx}{0.82\paperheight}{>{\centering\arraybackslash}m{0.37\paperheight} >{\centering\arraybackslash}m{0.37\paperheight} >{\centering\arraybackslash}m{0.06\paperheight}}
\multicolumn{3}{l}{Points with $\alpha$ satisfying $\alpha^4 -\alpha^2 + 64 = 0$.} \\ [0.8ex]
Coordinates & $j$-invariant & CM   \\ [0.5ex]          \hline \hline \rule{0pt}{2.8ex}
{$(-23\alpha^3 + 56\alpha^2 - 41\alpha - 184 : 2(3\alpha^3 - 24\alpha^2 +
    61\alpha + 24) : 4(\alpha^3 - 9\alpha + 72) : 8(-\alpha^3 + 9\alpha - 8) : 256)$} &
$1/512(-352692275\alpha^3 - 703166360\alpha^2 + 274492275\alpha + 6008246680)$
& NO  \\
{$(16(-\alpha^3 - 7\alpha + 72) : 16(-\alpha^3 - 7\alpha + 72) : -15\alpha^3 +
    8\alpha^2 + 207\alpha + 1400 : 2(-11\alpha^3 - 24\alpha^2 + 11\alpha + 856) :
    2048)$} &
\small{$\substack{1/73786976294838206464(185618461058956310814189183475\alpha^3 +
    451625115647050834040533476755\alpha^2 \\ + 191531841188243058702903432000\alpha -
    2509595271373538718782849717440)}$}
& NO   \\
    \end{tabularx}
\end{table}

\begin{table}[h!]     \centering     
\begin{tabularx}{0.82\paperheight}{>{\centering\arraybackslash}m{0.37\paperheight} >{\centering\arraybackslash}m{0.37\paperheight} >{\centering\arraybackslash}m{0.06\paperheight}} 
\multicolumn{3}{l}{Points with $\alpha$ satisfying $\alpha^4 + 10\alpha^2 - 13\alpha + 6 =0$.} \\ [0.8ex]
Coordinates & $j$-invariant & CM   \\ [0.5ex]          \hline \hline \rule{0pt}{2.8ex}
{$(4(3\alpha^3 + 11\alpha^2 - 57\alpha + 134) : 4(6\alpha^3 + 22\alpha^2 + 77\alpha
    + 77) :  -31\alpha^3 - 50\alpha^2 - 366\alpha + 971 : 2(25\alpha^3 +
    28\alpha^2 + 98\alpha + 289) : 764)$} &
{$\substack{1/329633646748081198527153512570976(-93996324565815115602099408132689707683077969\alpha^3 \\ - 27272583141368739953606302227340808768478213\alpha^2 \\ -
    922602504649377247568643703363209592787778419\alpha \\ +
    985686437922287224950855418754958911083274126)}$} 
& NO   \\
{$(-116\alpha^3 - 61\alpha^2 - 1021\alpha + 2589 : 3(-43\alpha^3 - 14\alpha^2
    - 341\alpha + 1479) :  9(4\alpha^3 + 8\alpha^2 + 47\alpha + 294) :
4617/171(2\alpha^3 + \alpha^2 + 22\alpha + 114) : 4617)$} &
\small{$\substack{1/8965117619822842085376(-44487012838602418265083128833\alpha^3 -
    37355632917799034954498764917\alpha^2 \\ - 466966355262376386084467179907\alpha +
    197644133686780526062866403118)}$}
& NO  \\
\scriptsize{$(-\alpha^3 + 3\alpha^2 + \alpha + 102 : 2(-4\alpha^3 - 2\alpha^2 - 37\alpha + 78)
: 2(-\alpha^3 - \alpha^2 - 12\alpha + 36) : 2(-\alpha^2 - \alpha + 42) : 144)$} &
{$\substack{1/20602102921755074907947094535686(394721925585506038939231516050863\alpha^3 \\ -
    2768103066312680500698037612142213\alpha^2 +
    3322177705362944338498999061316877\alpha \\ - 951585345694054611469818097448946)}$}
& NO  \\
$(2(-13\alpha^3 - 15\alpha^2 - 139\alpha + 46) : 6(2\alpha^3 + 3\alpha^2 + 20\alpha +
    4) : 9(\alpha^3 + 10\alpha - 13) : 54 : 0)$ &
{$\substack{1/1990656(-74128331233\alpha^3 - 1035513003989\alpha^2 + 1305535708381\alpha -
    583073890962)}$}
& NO  \\
    \end{tabularx}
\end{table}

\begin{table}[h!] 
\centering
\begin{tabularx}{0.82\paperheight}{>{\centering\arraybackslash}m{0.37\paperheight} >{\centering\arraybackslash}m{0.37\paperheight} >{\centering\arraybackslash}m{0.06\paperheight}}
\multicolumn{3}{l}{Points with $\alpha$ satisfying $\alpha^4 - 2\alpha^3 + 33\alpha^2 + 124\alpha + 324 =0$.} \\ [0.8ex]
Coordinates & $j$-invariant & CM   \\ [0.5ex]          \hline \hline \rule{0pt}{2.8ex}
{$\substack{(\alpha^3 + 79\alpha^2 - 696\alpha + 1420 : 3(-\alpha^3 + 29\alpha^2 - 60\alpha
    + 1388) : \\ 9(\alpha^3 + 7\alpha^2 + 24\alpha + 844) : 27(-\alpha^3 + 5\alpha^2 -
    36\alpha + 188) : 7776)}$} &
{$\substack{1/7962624(-43105827974237\alpha^3 - 309174638866315\alpha^2 \\ - 858567581971168\alpha -
    1476381721011948)}$}
& NO  \\
{$\substack{(21384/1458(7\alpha^3 - 22\alpha^2 + 233\alpha + 972) : 29\alpha^3 - 176\alpha^2 +
    1837\alpha + 11934 : \\ 6(-8\alpha^3 + 11\alpha^2 - 121\alpha + 1098) :
9(-\alpha^3 + 22\alpha^2 - 11\alpha + 1614) : 21384)}$} &
{$\substack{1/35860470479291368341504(12618031044935721102344083\alpha^3 -
    64157013721606511003683835\alpha^2 \\ + 555106151674649210688259872\alpha +
    153918822709072746459229012)}$} 
& NO  \\
{$\substack{(1053\alpha^3 - 997\alpha^2 + 36528\alpha + 219596 : 10(127\alpha^3 -
    643\alpha^2 + 2948\alpha + 41036) : \\ 4(33\alpha^3 - 677\alpha^2 + 868\alpha +
    81236) : 8(114\alpha^3 - 131\alpha^2 - 681\alpha + 49558) : 518080)}$} &
\small{$\substack{1/82408411687020299631788378142744(-909371081634429814189874867193897304637\alpha^3 \\ + 12586954158315417996000859906005034036565\alpha^2 -
    83534381706452646404083681075846947229408\alpha \\ +
    344518838187266283365991746158951784244372)}$}
& NO  \\
{$\substack{(8(-25\alpha^3 + 85\alpha^2 - 944\alpha - 2168) : 4(39\alpha^3 - 230\alpha^2 +
    1609\alpha + 2486) : \\ 45\alpha^3 - 153\alpha^2 + 920\alpha + 15980 :
2(-17\alpha^3 - 137\alpha^2 + 410\alpha - 3656) : 15584)}$} &
\small{$\substack{1/1318534586992324794108614050283904(-11434805526353461934615216899479090166078637\alpha^3 + \\ 5228953638910671575533618290081115693190725\alpha^2  + 
    83338824414498902469112145034057200015322912\alpha \\  +
    325680039927910810332592971628693692224201812)}$}
& NO  \\
    \end{tabularx}
\end{table}

\begin{table}[h!]    
\centering     
\begin{tabularx}{0.82\paperheight}{>{\centering\arraybackslash}m{0.37\paperheight} >{\centering\arraybackslash}m{0.37\paperheight} >{\centering\arraybackslash}m{0.06\paperheight}} 
\multicolumn{3}{l}{Points with $\alpha$ satisfying $\alpha^4 - 2\alpha^3 + 2\alpha^2 + 2\alpha - 4 =0$.} \\ [0.8ex]
Coordinates & $j$-invariant & CM   \\ [0.5ex]          \hline \hline \rule{0pt}{2.8ex}
{$\substack{(-384\alpha^3 + 1189\alpha^2 - 2064\alpha + 1526 : 150\alpha^3 -
    445\alpha^2 + 744\alpha - 386 : \\ -55\alpha^3 + 177\alpha^2 - 306\alpha + 302 :
16\alpha^3 - 53\alpha^2 + 86\alpha + 54 : 166)}$} &
{$\substack{1/73786976294838206464(89811909733112525574421\alpha^3 -
    247403443120239759236772\alpha^2 + \\ 470785458314526794481074\alpha -
    342240832269779732491866)}$}
& NO  \\
{$(11(3\alpha^3 - 4\alpha^2 + 6\alpha + 18) : -39\alpha^3 + 52\alpha^2 - 14\alpha -
    58 : 27\alpha^3 - 36\alpha^2 + 30\alpha + 162 : -19\alpha^3 - 4\alpha^2
    + 18\alpha + 62 : 88)$} &
{$\substack{1/4194304(12716759153730866\alpha^3 - 27727513674538234\alpha^2 \\ -
    16708346390401773\alpha + 29993540624153550)}$}
& NO  \\
{$\substack{(241\alpha^3 + 876\alpha^2 - 3030\alpha + 10510 : 2(317\alpha^3 -
    436\alpha^2 + 2\alpha + 6390) : \\ 4(-215\alpha^3 + 604\alpha^2 - 862\alpha + 2654)
: 8(-75\alpha^3 + 116\alpha^2 - 206\alpha + 1494) : 16288)}$} &
$\substack{1/4(8754479010695388380472550951\alpha^3 - 26931525036400964083187224764\alpha^2 \\ +
    46495731438962280331948301537\alpha - 32535041725515706432242799260)}$
& NO  \\
$(\alpha^3 + 3\alpha^2 : \alpha^3 - 2\alpha^2 - 2\alpha + 4 : 2\alpha + 4 :
    2\alpha^2 : 4)$ &
$\substack{1/64(-121794908716379\alpha^3 + 95010271213628\alpha^2 \\ - 127685261383726\alpha -
    399355122270266)}$
& NO  \\
    \end{tabularx}
\end{table}

\begin{table}[h!] 
\centering 
\begin{tabularx}{0.82\paperheight}{>{\centering\arraybackslash}m{0.37\paperheight} >{\centering\arraybackslash}m{0.37\paperheight} >{\centering\arraybackslash}m{0.06\paperheight}} 
\multicolumn{3}{l}{Points with $\alpha$ satisfying $\alpha^4 - 2\alpha^3 - 6\alpha^2 + 3\alpha + 8 =0$.} \\ [0.8ex]
Coordinates & $j$-invariant & CM   \\ [0.5ex]          \hline \hline \rule{0pt}{2.8ex}
\footnotesize{$(27\alpha^3 - 78\alpha^2 - 66\alpha + 193 : 10(-\alpha^3 + 2\alpha^2 + 6\alpha - 3)
: 4(-3\alpha^3 + 12\alpha^2 + 4\alpha - 7) : 8(\alpha^3 - 4\alpha^2 + 2\alpha + 9) :
80)$} &
$\substack{1/2(-73415413682485\alpha^3 - 96687558634718\alpha^2 \\ + 119780562840206\alpha +
    177065002925201)}$
& NO  \\
{$(2(33\alpha^3 + 21\alpha^2 - 31\alpha + 101) : 2(-5\alpha^3 - 59\alpha^2 +
    14\alpha + 301) : 27\alpha^3 + 73\alpha^2 - 137\alpha + 278 :
2(20\alpha^3 - 71\alpha^2 - 56\alpha + 331) : 614)$} &
{$\substack{1/36893488147419103232(297793755303701986554311435\alpha^3 \\ -
    1401720256809888215691668318\alpha^2  - 279382961471190479874478834\alpha \\+
    2112213001641271144506379601)}$}
& NO  \\
{$(-452\alpha^3 + 348\alpha^2 + 3048\alpha + 2713 : 164\alpha^3 - 70\alpha^2
    - 1221\alpha - 946 : -84\alpha^3 + 57\alpha^2 + 569\alpha + 696 :
29\alpha^3 - 30\alpha^2 - 193\alpha + 90 : 289)$} &
$\substack{1/32(-10248101735925\alpha^3 + 25536131264162\alpha^2 \\ + 56923104477966\alpha -
    95768669529519)}$
& NO  \\
{$(-167\alpha^3 + 518\alpha^2 + 178\alpha + 383 : -171\alpha^3 +
    590\alpha^2 + 294\alpha - 241 : -73\alpha^3 + 70\alpha^2 + 562\alpha + 741 :
5\alpha^3 - 90\alpha^2 + 166\alpha + 1091 : 1244)$} &
$\substack{1/8192(8699697034245003878315770635\alpha^3 \\ - 5951864245642353783162260318\alpha^2 
    - 60029965678541791682581607154\alpha \\ - 52891570257608085379196076719)}$ 
& NO   \\
    \end{tabularx}
\end{table}

 \begin{table}[h!]
 \centering     
 \begin{tabularx}{0.82\paperheight}{>{\centering\arraybackslash}m{0.37\paperheight} >{\centering\arraybackslash}m{0.37\paperheight} >{\centering\arraybackslash}m{0.06\paperheight}}
 \multicolumn{3}{l}{Points with $\alpha$ satisfying $\alpha^4 -\alpha^3 - 5\alpha^2 - 12\alpha + 4 =0$.} \\ [0.8ex]
 Coordinates & $j$-invariant & CM   \\ [0.5ex]          \hline \hline \rule{0pt}{2.8ex}
$(\alpha^3 + 5\alpha^2 - 7\alpha + 10 : \alpha^3 - 3\alpha^2 + 9\alpha + 10 :
-\alpha^3 + 3\alpha^2 + 7\alpha + 6 : \alpha^3 - 3\alpha^2 + \alpha + 10 : 16)$ &
$\substack{1/32768(1699588421018410771117575\alpha^3 + 4085001869259831852583379\alpha^2 +\\
    5405460540196338811614783\alpha - 1997445389150221782346154)}$
& NO  \\
$\substack{(2(63\alpha^3 + 197\alpha^2 + 285\alpha + 548) : 67\alpha^3 + 103\alpha^2 +
    367\alpha + 2010 : \\ 4(13\alpha^3 + 30\alpha^2 - 69\alpha + 390) :
4(15\alpha^3 - 17\alpha^2 - 28\alpha + 450) : 2684)}$ &
$\substack{1/128(42407499986747925235047\alpha^3 + 101927451713689293321395\alpha^2 \\+
    134875046777126873505951\alpha - 49839516594937823319530) }$
& NO  \\
$(-9\alpha^3 + 3\alpha^2 + 47\alpha + 182 : 2(3\alpha^3 - \alpha^2 - 21\alpha - 18)
: 8(-\alpha^3 + \alpha^2 + 5\alpha + 16) : 32 : 64) $ & 
{$1/8(1212893975\alpha^3 - 854535645\alpha^2 - 6316920369\alpha - 16421016074)$}
& NO  \\
$\substack{(8(-89\alpha^3 + 75\alpha^2 + 407\alpha + 1630) : 32(12\alpha^3 - 7\alpha^2 -
    86\alpha + 51) : \\ -535\alpha^3 + 289\alpha^2 + 2957\alpha + 13446 :
2(13\alpha^3 - 123\alpha^2 - 47\alpha + 3310) : 8864)}$ &
$\substack{1/147573952589676412928(82530874763434407104988528897559\alpha^3 \\ -
    58389322418959943410157155889373\alpha^2 -
    428793171837557068985508233974065\alpha \\- 1118102278756799246262107049140746)}$
& NO   \\
    \end{tabularx}
\end{table}

 \begin{table}[h!] 
 \centering
 \begin{tabularx}{0.82\paperheight}{>{\centering\arraybackslash}m{0.37\paperheight} >{\centering\arraybackslash}m{0.37\paperheight} >{\centering\arraybackslash}m{0.06\paperheight} }    
 \multicolumn{3}{l}{Points with $\alpha$ satisfying $\alpha^4 -\alpha^3 +\alpha^2 + 15\alpha + 8 =0$.} \\ [0.8ex]
 Coordinates & $j$-invariant & CM   \\ [0.5ex]          \hline \hline \rule{0pt}{2.8ex}
$(4(\alpha^3 - 2\alpha^2 - 5\alpha + 6) : 8(\alpha^2 + \alpha + 1) : \alpha^3 -
    4\alpha^2 + 5\alpha + 40 : 2(\alpha^3 - 3\alpha + 8) : 24)$ &
$\substack{1/18786186952704(-529096150156485062644239434\alpha^3 +
    833547216099653533626822633\alpha^2 \\ - 1008733619395736267637671387\alpha -
    7355999499831154760170196264)}$
& NO   \\
$\substack{(-282\alpha^3 + 891\alpha^2 - 1847\alpha + 496 : 202\alpha^3 -
    487\alpha^2 + 1197\alpha + 3602 : \\ -259\alpha^3 + 686\alpha^2 - 1438\alpha +
    695 : -17\alpha^3 - 3\alpha^2 + 84\alpha + 2248 : 3554)}$ &
$\substack{1/711559752519106944(1455331341030054027299383378\alpha^3 -
    4035164875167567001867400433\alpha^2 +\\ 8609137673795812100808665275\alpha +
    6567184709951911955583590440)}$
& NO   \\
$(2(-2\alpha^3 + 6\alpha^2 - 11\alpha + 4) : 3(\alpha^3 - 2\alpha^2 + 5\alpha + 18) :
3(-\alpha^3 + 3\alpha^2 - 7\alpha + 5) : 36 : 54)$ &
$\substack{1/24(295009796578\alpha^3 - 818007129969\alpha^2 \\+ 1745183842219\alpha +
    1331261559976)}$
& NO   \\
\footnotesize{$(2(7\alpha^3 - 12\alpha^2 + 13\alpha + 112) : 4(-2\alpha^3 + 3\alpha^2 - 5\alpha -
    20) : 7\alpha^3 - 12\alpha^2 + 19\alpha + 136 : -2\alpha^3 - 2\alpha + 16 :
    48)$} &
$\substack{1/54(456884149846\alpha^3 - 292804658823\alpha^2 \\- 2750405339723\alpha -
    1410079750568)}$
& NO \\
    \end{tabularx}
\end{table}

 \begin{table}[h!] 
 \centering     
 \begin{tabularx}{0.82\paperheight}{>{\centering\arraybackslash}m{0.37\paperheight} >{\centering\arraybackslash}m{0.37\paperheight} >{\centering\arraybackslash}m{0.06\paperheight}}
 \multicolumn{3}{l}{Points with $\alpha$ satisfying $\alpha^4 + 22\alpha^2 - 36\alpha - 1587 =0$.} \\ [0.8ex]
 Coordinates & $j$-invariant & CM   \\ [0.5ex]          \hline \hline \rule{0pt}{2.8ex}
{$\substack{(-7423\alpha^3 + 56049\alpha^2 - 380713\alpha + 4990147 :
10(31\alpha^3 - 3409\alpha^2 + 9017\alpha + 473485) : \\ 4(-823\alpha^3 +
    9049\alpha^2 - 123993\alpha + 1840387) : 8(-499\alpha^3 - 2823\alpha^2 -
    19569\alpha + 788591) : 8416960)}$} &
\small{$\substack{1/10317616411527050688(551500224281215664829167280769459\alpha^3 \\ -
    2916026023363419895375635644340249\alpha^2 + \\
    27551326444124285184035446453362223\alpha -
    165530077365192468536119114277605449)}$}
& NO  \\
{$\substack{(2(1540\alpha^3 + 24771\alpha^2 + 279122\alpha + 2323657) :
5413\alpha^3 + 747\alpha^2 + 217423\alpha + 5494649 : \\
6063\alpha^3 + 18979\alpha^2 + 101933\alpha + 3263633 :
3317\alpha^3 + 23025\alpha^2 + 73939\alpha + 3159503 : 4790452)}$} &
\small{$\substack{1/532030685184(95326862066056021324942131004607377982417\alpha^3 \\ +
    545528649047435090745590753332103525334768\alpha^2 +\\
    5219097010417755802150885707010029194059739\alpha +
    26435647875049957400594991340415017745868228)}$}
& NO  \\
{$\substack{(2/3(-350\alpha^3 + 2133\alpha^2 - 20414\alpha + 131049) : 99\alpha^3 -
    821\alpha^2 + 7209\alpha - 10283 : \\ -121\alpha^3 + 765\alpha^2 - 4519\alpha +
    49527 : 3(\alpha^3 - 95\alpha^2 + 463\alpha + 7483) : 38628)}$} &
\small{$\substack{1/12838933875301847924736(272821842067812043459150463\alpha^3 -
    1441756036037485113891638193\alpha^2 + \\ 13628990993078146017200509961\alpha -
    81884012218007583784790937123)}$}
& NO  \\
{$\substack{(2255\alpha^3 + 12903\alpha^2 + 122561\alpha + 667341 : 727\alpha^3
    + 3783\alpha^2 + 38017\alpha + 283941 : \\ 205\alpha^3 + 1173\alpha^2 +
    13483\alpha + 128559 : 31\alpha^3 + 303\alpha^2 + 5305\alpha + 93933 :
    103008)}$} &
\small{$\substack{1/25657344(27395558102194290407\alpha^3 + 156777348815983490103\alpha^2 + \\
    1499898372357530447969\alpha + 7597242259694917251093)}$}
& NO  \\
    \end{tabularx}
\end{table}

 \begin{table}[h!] 
 \centering     
 \begin{tabularx}{0.82\paperheight}{>{\centering\arraybackslash}m{0.37\paperheight} >{\centering\arraybackslash}m{0.37\paperheight} >{\centering\arraybackslash}m{0.06\paperheight} }
 \multicolumn{3}{l}{Points with $\alpha$ satisfying $\alpha^4 -\alpha^3 - 26\alpha^2 - 612\alpha - 376 =0$.} \\ [0.8ex]
 Coordinates & $j$-invariant & CM   \\ [0.5ex]          \hline \hline \rule{0pt}{2.8ex}
{$\substack{(2(-7\alpha^3 + 819\alpha^2 + 338\alpha + 3140) : 2(\alpha^3 - 117\alpha^2 +
    4030\alpha + 7708) : \\ 41\alpha^3 - 39\alpha^2 - 1300\alpha + 11516 :
-43\alpha^3 + 273\alpha^2 + 2756\alpha + 11132 : 19032)}$} &
{$\substack{1/6030134083584(9696652518865066949806479791\alpha^3 +
    87629530896959309713118771172\alpha^2 + \\ 627432607595384179205760536627\alpha +
    363246817106763572205210032174)}$}
& NO  \\
{$\substack{(1/233064(-1727\alpha^3 + 2535\alpha^2 + 45058\alpha + 1115008) : 1/38844(-129\alpha^3
    + 130\alpha^2 + 3445\alpha + 104558) : \\ 1/621504(-953\alpha^3 + 1755\alpha^2 +
    24388\alpha + 953788) : 1/103584(-71\alpha^3 + 13\alpha^2 + 884\alpha + 116148) :
    1)}$} &
{$\substack{1/9984(-461824101632439297789343\alpha^3 + 753052741393911437371620\alpha^2 + \\
    11532547733418129305425109\alpha + 275363866633306279112305730)}$}
& NO  \\
{$\substack{(1/755872(1097\alpha^3 + 7553\alpha^2 + 26728\alpha + 269940) : 1/14536(9\alpha^3 +
    110\alpha^2 + 211\alpha + 7982) : \\ 1/377936(283\alpha^3 - 377\alpha^2 + 780\alpha +
    246548) : 1/188968(171\alpha^3 + 273\alpha^2 - 8710\alpha + 89880) : 1)}$} &
{$\substack{1/113724(4256408352336373807957394927\alpha^3 +
    38465549476528877862288479220\alpha^2 + \\ 275415602064898773665089788899\alpha +
    159449540270119811148234532430)}$}
& NO  \\
{$\substack{(4(-22207\alpha^3 + 29081\alpha^2 + 576784\alpha + 14718564) :
4(-10641\alpha^3 + 10907\alpha^2 + 310180\alpha + 7975676) :
\\ -21479\alpha^3 + 48581\alpha^2 + 609596\alpha + 17635140 :
2(-4957\alpha^3 + 4823\alpha^2 + 91364\alpha + 6258412) : 10977824)}$} &
{$\substack{1/1156284597843548784(-7533292230825378659055246790818024319\alpha^3 +\\
    12283824828743039994402351421616561892\alpha^2 + 
    188119355258177801813217995229096721397\alpha \\ +
    4491745817999145473916690222308277724034)}$}
& NO \\
    \end{tabularx}
\end{table}
\end{landscape}
\clearpage%
}

\newpage

\section*{Funding}

This work was supported by Deutsche Forschungsgemeinschaft
[DFG-Grant MU 4110/1-1 to S.G.]; and the Engineering and Physical Sciences Research Council [EP/N509796/1 to J.B., EP/N509619/1  to P.G.].

\section*{Acknowledgements}
The authors would like to warmly thank David Zureick-Brown for initiating this project, and both Jackson Morrow and David for their help during the project sessions of the 2020 Arizona Winter School. We are also grateful to the organisers of the school for providing such excellent conditions in Tucson, and the first and last named authors thank the school's donors for their financial support. 

We thank Samir Siksek for many valuable comments and suggestions, which have improved this work greatly. Likewise, we thank Tim Dokchitser, Tom Fisher, Steffen M\"uller, Filip Najman, Lazar Radi\v{c}evi\'c and Damiano Testa for their help. 

We are grateful to anonymous referees for many helpful comments that notably improved our article.
\bibliographystyle{alpha}
\bibliography{bib}
\end{document}